\documentclass[10pt]{amsart}
\usepackage{amssymb}
\usepackage{bm}
\usepackage{graphicx}
\usepackage[centertags]{amsmath}
\usepackage{amsfonts}
\usepackage{amsthm}
\usepackage{graphicx}


\newtheorem{thm}{Theorem}
\newtheorem{cor}[thm]{Corollary}
\newtheorem{lem}[thm]{Lemma}

\newtheorem{prop}[thm]{Proposition}

\newtheorem{defn}[thm]{Definition}
\theoremstyle{definition}
\newtheorem{rem}[thm]{Remark}

\newcommand{\rr}{\mathbb{R}}
\newcommand{\nn}{\mathbb{N}}
\newcommand{\ee}{\varepsilon}
\newcommand{\dd}{\delta}

\long\def\symbolfootnote[#1]#2{\begingroup%
\def\thefootnote{\fnsymbol{footnote}}\footnote[#1]{#2}\endgroup}

\begin{document}
\title[Operators on the Stopping Time space]{Operators on the Stopping Time space}

\author[D. Apatsidis]{Dimitris Apatsidis}
\address{National Technical University of Athens, Faculty of Applied Sciences,
Department of Mathematics, Zografou Campus, 157 80, Athens, Greece}
\email{dapatsidis@hotmail.com}

\begin{abstract}
Let $S^1$ be the stopping time space and $\mathcal{B}_1(S^1)$ be the Baire-1 elements of the second dual of $S^1$.
To each element $x^{**}$ in the space $\mathcal{B}_1(S^1)$ we
associate a positive Borel measure $\mu_{x^{**}}$ on the Cantor set.
We use the measures $\{\mu_{x^{**}}: x^{**}\in\mathcal{B}_1(S^1) \}$ to
characterize the operators $T:X\to S^1$, defined on a space $X$ with an unconditional basis, which preserve a copy of $S^1$.
In particular, we show that $T$ preserves a copy of $S^1$ if and only if the set $\{\mu_{x^{**}}:\;x^{**}\in\mathcal{B}_1(S^1)\}$
is non separable as a subset of $\mathcal{M}(2^\nn)$.
\end{abstract}

\maketitle

\symbolfootnote[0]{\textit{2010 Mathematics Subject
Classification:} Primary 46B03, Secondary 47B37, 46B09.}
\symbolfootnote[0]{\textit{Keywords and phrases:} operators, unconditional bases.}

\symbolfootnote[0]{This research was supported by program API$\Sigma$TEIA-1082.}


\section{Introduction.}
The Stopping Time space $S^1$, was introduced by H. P. Rosenthal as the unconditional analogue of $L^1(2^\nn)$,
where by $2^\nn$ we denote the Cantor set and $L^1(2^\nn)$ is the Banach space of equivalence classes of
measurable functions on $2^\nn$ which are absolutely integrable on $2^\nn$, with respect to the Haar
measure. The space $S^1$ belongs to the
wider class of the spaces $S^p$, $1\leq p<\infty$ which we are
about to define. We denote by $2^{<\nn}$ the dyadic tree and by
$c_{00}(2^{<\nn})$ the vector space of all real valued functions
defined on $2^{<\nn}$ with finite support. For $1\leq p<\infty$ we
define the $\|\cdot\|_{S^p}$ norm on $c_{00}(2^{<\nn})$ as follows. For
$x\in c_{00}(2^{<\nn})$, we set
\[\|x\|_{S^p}=\sup\bigg(\sum_{s\in A}|x(s)|^p\bigg)^{1/p}\] where
the supremum is taken over all antichains $A$ of $2^{<\nn}$. The
space $S^p$ is the completion of $(c_{00}(2^{<\nn}),
\|\cdot\|_{S^p})$. The space $S^1$  has a 1-unconditional basis and G. Schechtman, in
an unpublished work, showed that it contains almost all $\ell_p$  isometrically, $1\leq
p<\infty$. This result was extended in \cite{B} to all $S^p$
spaces where it was shown that for every $p\leq q$, $\ell_q$ is embedded
into $S^p$. An excellent and detailed study of the stopping time
space $S^1$, in fact in a more general setting, is included in N.
Dew's Ph.D. Thesis \cite{D}. The interested reader will also find
therein,  among other things, a proof of Schechtman's unpublished
result. Also, in \cite{BO} H. Bang and E. Odell  showed that $S^1$
has the Weak Banach-Saks and the Dunford-Pettis properties, as the space $L^1(2^\nn)$.

In the sequel, by an operator between Banach spaces, we shall always mean a bounded linear operator.  Let $X$, $Y$, $Z$ be Banach spaces and $T:X\to Y$ be an operator.
We will say that the operator $T$, preserves a copy of $Z$, if there is a subspace $W$
of $X$ which is isomorphic to $Z$, such that the  restriction of $T$ on $W$  is an isomorphism.
The study of preserving properties of operators on $S^1$, is important to
the isomorphic classification of the complemented subspaces of $S^1$. The main results of the present paper are in this direction.

In \cite{ES} P. Enflo and T. W. Starbird showed that every
subspace $Y$ of $L^1$, isomorphic to $L^1$, contains a subspace which is isomorphic to $L^1$ and
complemented in $L^1$. From this it follows that if a complemented subspace $X$ of $L^1$ contains
a subspace isomorphic to $L^1$, then $X$ is isomorphic to $L^1$. Also, if $L^1$ is isomorphic to
a $\ell_1$ sum of a sequence of Banach spaces, then one of the spaces is isomorphic to $L^1$.
We will prove the same results in the case of $S^1$.

The work included in the present paper is motivated by the following problems.

\noindent\textit{Problem 1.}
Let $T:S^1\to S^1$ be an operator and $X$ a reflexive subspace
of $S^1$ such that the  restriction of $T$ on $X$  is an isomorphism.
Does $T$ preserve a copy of $S^1$?

\noindent\textit{Problem 2.}
Let $X$ be a complemented subspace of $S^1$ such that $\ell_1$ does not embed in $X$.
Is $X$ $c_0$-saturated?

It is well known that a subspace of a space with an unconditional basis, is reflexive if and only if it
does not contain  $c_0$ and $\ell_1$ isomorphically. Therefore, an affirmative answer to the first
problem yields an affirmative answer to the second one.

To state our main results we need some notation and to introduce some
terminology. If $X$ is a Banach space, by $\mathcal{B}_1(X)$ we shall denote
the space of all Baire-1 elements of $X^{**}$. That is, the space of all $x^{**}\in X^{**}$
such that there is a sequence $(x_n)_{n\in\nn}$ of $X$ with $x^{**}=\lim_nx_n$
in the weak star topology, which from \cite{OR} is equal to the space of all $x^{**}\in X^{**}$, so that $x^{**}|_{(B_{X^*},w^*)}$ is a Baire-1 function.
By $\mathcal{M}(2^\nn)$ we denote the space of all Borel measures on $2^\nn$
endowed by the norm $\|\mu\|=\sup\{|\mu(B)|: B\,\text{is a Borel subset of}\, 2^\nn\}$.
To each $x^{**}\in\mathcal{B}_1(S^1)$ we associate a positive
Borel measure on $2^\nn$ which will denote by $\mu_{x^{**}}$.
The definition of $\mu_{x^{**}}$ is related to a corresponding concept defined and studied in \cite{AAK} where the space $V_2^0$ is studied.
It is worth mentioning that in the case of $V_2^0$ we have
that $\mathcal{B}_1(V_2^0) = (V_2^0)^{**}$, i.e. a measure can be assigned to every $x^{**}\in (V_2^0)^{**}$.
This is not the case in the space $S^1$.

We are ready to state the first main result of the paper.

\begin{thm}\label{t1}
Let $X$ be a Banach space with an unconditional basis,
$Y$ a closed subspace of $X$ and $T: X \to S^1$ an
operator. The following assertions are equivalent.
\begin{itemize}

\item[(i)] The set $\{\mu_{T^{**}(y^{**})}: y^{**}\in\mathcal{B}_1(Y) \}$
is non-separable subset of $\mathcal{M}(2^\nn)$.

\item[(ii)] There exists a subspace $Z$ of
$Y$ isomorphic to $S^1$ such that the restriction of $T$ on $Z$  is an isomorphism.

\item[(iii)] There exists subspace $Z$ of
$Y$ isomorphic to $S^1$ such that the restriction of $T$ on $Z$  is an isomorphism and $T[Z]$ is complemented in $S^1$.

\end{itemize}
Moreover, if (iii) is satisfied then the space $Z$ is complemented in $X$.
\end{thm}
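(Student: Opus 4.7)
The plan is to prove the cycle $(iii) \Rightarrow (ii) \Rightarrow (i) \Rightarrow (iii)$ and then deduce the moreover clause directly. The implication $(iii) \Rightarrow (ii)$ is immediate. For the moreover clause, assuming $(iii)$ with a bounded projection $P : S^1 \to T[Z]$, the composition $Q = (T|_Z)^{-1} \circ P \circ T : X \to Z$ is bounded; and for $z \in Z$ we have $T(z) \in T[Z]$, hence $P(Tz) = Tz$, so $Q(z) = z$ and $Q$ projects $X$ onto $Z$.

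For $(ii) \Rightarrow (i)$: the inclusion $Z \hookrightarrow Y$ lifts to a weak$^*$-weak$^*$-continuous embedding $Z^{**} \hookrightarrow Y^{**}$ which carries $\mathcal{B}_1(Z)$ into $\mathcal{B}_1(Y)$. Since $T|_Z$ is an isomorphism, $T^{**}|_{Z^{**}}$ is also an isomorphism onto its image; pulling back through the given isomorphism $Z \simeq S^1$, the task reduces to showing that the set $\{\mu_{x^{**}} : x^{**} \in \mathcal{B}_1(S^1)\}$ is itself non-separable in $\mathcal{M}(2^\nn)$. This should follow directly from the construction of $\mu_{x^{**}}$ (established earlier in the paper, in analogy with \cite{AAK}), for instance by considering the $w^*$-limits of the basis vectors of $S^1$ along the uncountably many branches of $2^{<\nn}$, which produce a pairwise mutually singular family of probability measures on $2^\nn$.

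For the main implication $(i) \Rightarrow (iii)$: the non-separability is leveraged to extract, via a Cantor scheme on $\mathcal{B}_1(Y)$, a dyadic family $(y^{**}_s)_{s \in 2^{<\nn}}$ and a constant $\dd > 0$ such that for every $s$ the measures $\mu_{T^{**}(y^{**}_{s\con 0})}$ and $\mu_{T^{**}(y^{**}_{s\con 1})}$ differ by at least $\dd$ and live, up to small error, on disjoint clopen subsets of $2^\nn$ that refine the support of $\mu_{T^{**}(y^{**}_s)}$. Each $y^{**}_s$ is a weak$^*$ limit of a bounded sequence in $Y$; combining this with the unconditional basis $(e_n)$ of $X$, a diagonal/gliding-hump procedure produces vectors $y_s \in Y$ which are essentially disjointly supported on $(e_n)$ and for which $T(y_s)$ approximates $\mu_{T^{**}(y^{**}_s)}$ in the relevant norm. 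The disjoint support and unconditionality then yield the $S^1$-upper estimate for $\sum a_s y_s$, while translating the measure separation into an antichain of $2^{<\nn}$ on which the $T(y_s)$'s are essentially supported yields the matching $S^1$-lower estimate for $\sum a_s T(y_s)$. Thus $Z = \overline{\mathrm{span}}\{y_s : s \in 2^{<\nn}\}$ is isomorphic to $S^1$, $T|_Z$ is an isomorphism, and the tree-block structure on the images $T(y_s)$ allows one to define coordinate functionals from the measure data and extend them by Hahn--Banach to a bounded projection $S^1 \to T[Z]$.

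The main obstacle is this Cantor-scheme extraction in $(i) \Rightarrow (iii)$: one must simultaneously arrange measure separation by $\dd$, refinement of supports along the dyadic tree, and approximation of the weak$^*$ limits $y^{**}_s$ by actual vectors $y_s \in Y$ with nearly disjoint support on $(e_n)$. Coordinating these three requirements requires an inductive construction that interleaves stabilization in $\mathcal{M}(2^\nn)$ with gliding-hump choices in $X$, and the delicate point is verifying that the lower $S^1$-estimate dictated by the measure tree survives the passage from $(y^{**}_s)$ to $(y_s)$. Once this is in place, the final complementation step is essentially bookkeeping.
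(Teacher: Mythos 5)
Your overall architecture matches the paper's (the cycle $(iii)\Rightarrow(ii)\Rightarrow(i)\Rightarrow(iii)$, with the ``moreover'' clause obtained from $Q=(T|_Z)^{-1}PT$), but two steps contain genuine gaps rather than mere omissions of detail.

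First, in $(ii)\Rightarrow(i)$ your reduction is invalid. The measure $\mu_{x^{**}}$ is defined through the canonical basis $(e_s)_{s\in 2^{<\nn}}$ of $S^1$ (via the quantities $\inf_m\|x^{**}|T_t^m\|$), so the assignment $x^{**}\mapsto\mu_{x^{**}}$ is \emph{not} an isomorphic invariant. After pulling back through an isomorphism $U:S^1\to Z$, what you must show is that $\{\mu_{S^{**}(x^{**})}:x^{**}\in\mathcal{B}_1(S^1)\}$ is non-separable for the composite embedding $S=T\circ U:S^1\to S^1$, and this does not reduce to the case $S=\mathrm{Id}$ (where the branch limits trivially give the point masses $\delta_\sigma$). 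The image of the basis under $S$ can sit arbitrarily inside $S^1$, and one must first perturb $(S(e_{t_s}))_s$ to a block family along a dyadic subtree (the paper's Lemma \ref{lsubtree}), then prove non-separability of the branch measures for such a block family using the uniform lower $\ell_1$-estimate on levels together with Lemma \ref{ldom} (this is Proposition \ref{nonsep}), and finally transfer back via the stability Lemma \ref{lequal}. None of this is present in your sketch.

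Second, in $(i)\Rightarrow(iii)$ your claim that ``the disjoint support and unconditionality then yield the $S^1$-upper estimate for $\sum a_s y_s$'' is false. Disjointly supported vectors in a space with a $1$-unconditional basis need not satisfy any upper estimate better than $\ell_1$ (take $X=\ell_1$), and an $\ell_1$-upper bound would only show that $[y_s]$ contains $\ell_1(2^{<\nn})$, not $S^1$. What is actually needed (condition (iii) of Proposition \ref{eq}, i.e.\ (C3) of Lemma \ref{lemind}) is a $c_0$-upper estimate \emph{along branches}: $\|\sum_{k=0}^n a_k y_{\sigma|k}\|\leq(1+\ee)\max_k|a_k|$. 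In the paper this is achieved by the key device your proposal misses: at each node one chooses $\xi_s$ to be a weak$^*$-condensation point of $\{y^{**}_\xi\}_{\xi\in\Gamma_s}$ and stabilizes finitely many coordinates, so that along a branch the vectors $y_{\sigma|k}$ are (up to summable error) the successive restrictions $\sum_{i\in F_{\sigma|k}}d_i\,y^{**}_{\xi_{\sigma|k}}(e_i^*)e_i$ of what is essentially a single norm-one element $y^{**}_{\xi_{\sigma|n}}$; the telescoping estimate (\ref{lemind1}) then bounds $\|\sum_{k=0}^n\sum_{i\in F_{\sigma|k}}y^{**}_{\xi_{\sigma|k}}(e_i^*)e_i\|$ by $1+\ee/2$. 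Without this mechanism the construction does not produce a copy of $S^1$. The remaining ingredients of your sketch (pairwise singular parts $\tau_\xi$ via Lemma \ref{fact1}, disjoint clopen supports via Lemma \ref{fact2}, incomparable antichains $A_s$ giving the lower estimate, and the averaging projection $P(x)=\sum_s\frac{x_s^*(x)}{x_s^*(y_s)}y_s$ whose boundedness rests on the combinatorial Lemma \ref{ineq}) are in the right spirit but are asserted rather than carried out.
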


Clearly, since the basis of $S^1$ is $1$-unconditional, the above theorem also holds by replacing $X$ with the space $S^1$.






We state some consequences of the above theorem.\\

\noindent{\bf{Corollary 1.}}
Let  $Y$ be a closed subspace of $S^1$. Then the set
$\mathcal{M}_{\mathcal{B}_1(Y)}=\{\mu_{y^{**}}: y^{**}\in\mathcal{B}_1(Y) \}$
is a non-separable subset of $\mathcal{M}(2^\nn)$, if and only if, there exists a subspace $Z$ of
$Y$ isomorphic to $S^1$ and complemented in $S^1$.\\

\noindent{\bf{Corollary 2.}}
Let $Y$ be a closed subspace of $S^1$ which is isomorphic to $S^1$.
Then $Y$ contains a subspace $Z$ isomorphic to $S^1$ which is complemented in $S^1$.\\

\noindent{\bf{Corollary 3.}}
Let $Y$ be a complemented subspace of $S^1$ such that the set
$\{\mu_{y^{**}}: y^{**}\in\mathcal{B}_1(Y) \}$ is a non-separable subset of $\mathcal{M}(2^\nn)$.
Then $Y$ is isomorphic to $S^1$.\\

In \cite{BR} it is shown that if $L^1$ $G_\delta$-embeds into $X$ where, $X$ is either isomorphic to
a dual space, or $X$ embeds into $L^1$, then $L^1$ embeds isomorphically into $X$.
We recall that a bounded and linear operator $T:X\to Y$ between Banach spaces, is called a
$G_\delta$-embedding, if it is injective and $T[K]$ is a $G_\delta$ subset of
$Y$, for all closed bounded $K$.\\

\noindent{\bf{Corollary 4.}}
Suppose $S^1$ is isomorphic to an $\ell_1$ sum of a sequence of Banach spaces $X_i$.
Then there is an $j$ such that $X_j$ is isomorphic to $S^1$.\\

Our second result is the following.\\

\noindent{\bf{Theorem 2.}}
Let $X$ be a Banach space. If $S^1$ $G_\delta$-embeds in $X$ and $X$ $G_\delta$-embeds in $S^1$,
then $S^1$ complementably embeds in $X$.\\

As a consequence of the above theorem we obtain the following\\

\noindent{\bf{Corollary 5.}}
Let $X$ be a closed subspace of $S^1$. If $S^1$ $G_\delta$-embeds in $X$,
then $S^1$ complementably embeds in $X$.\\

The paper is organized as follows. In section 2, we fix some notation that we shall use
and we remind the definition of $S^1$.
In section 3, we give the definition of the measure $\mu_{x^{**}}$,  while in section 4
we give the proofs of Theorems 1 and 2 and their corollaries.

\section{Preliminaries.}
\subsection{The dyadic tree.} For every $n\geq 0$,
we set $2^n=\{0,1\}^n$ (where $2^0=\{\varnothing\}$). Hence for
$n\geq 1$, every $s\in 2^n$ is of the form $s=(s(1),...,s(n))$.
For $0\leq m<n$ and $s\in 2^n$, we set $s|m=(s(1),...,s(m))$, where if
$m=0$, $s|0=\varnothing$. Also, $2^{\leqslant n}=\cup_{i=0}^ n 2^i$
and $2^{<\nn}=\cup_{n=0}^\infty 2^n$. The \textit{length} $|s|$ of
an $s\in 2^{<\nn}$, is the unique $n\geq 0$ such that $s\in 2^n$.
The initial segment partial ordering on $2^{<\nn}$ will be denoted
by $\sqsubseteq$ (i.e. $s\sqsubseteq t$ if $m=|s|\leq |t|$ and
$s=t|m$). For $s, t\in 2^{<\nn}$, $s\perp t$ means that $s,t$ are
$\sqsubseteq$-incomparable (that is neither $s\sqsubseteq t$ nor
$t\sqsubseteq s$). For an $s\in 2^{<\nn}$, $s^\smallfrown 0$ and
$s^\smallfrown 1$ denote the two immediate successors of $s$ which
end with $0$ and $1$ respectively.

An \textit{antichain} of $2^{<\nn}$, is a subset of $2^{<\nn}$
such that for every $s,t\in A$, $s\perp t$.
If $A,B$ are subsets of $2^{<\nn}$
then we write $A\perp B$ if for all $s\in A$ and for all
$t\in B$, $s\perp t$. By $\mathcal{A}$ we denote the set of all antichain of $2^{<\nn}$.
A \textit{branch} of
$2^{< \nn}$ is a maximal totally ordered subset of $2^{< \nn}$.

A subset $\mathcal{I}$ of $2^{<\nn}$ is called a
\textit{segment} if $(\mathcal{I},\sqsubseteq)$ is linearly
ordered by $\sqsubseteq$ and moreover for every
$s\sqsubset v\sqsubset t$, $v$ is contained in
$\mathcal{I}$  provided that   $s,t$ belong to
$\mathcal{I}$. For a finite segment $\mathcal{I}$ of $2^{<\nn}$, by
$\max\mathcal{I}$ we denote the $\sqsubseteq$-greatest element of $\mathcal{I}$.

A segment $\mathcal{I}$ is called \textit{initial} if the empty
sequence $\varnothing$ belongs to $\mathcal{I}$. For any
 $s\in 2^{<\nn}$, let $\mathcal{I}(s)=\{t\in2^{<\nn}:\; t\sqsubseteq s\}.$ Then clearly
 $\mathcal{I}(s)$ is an initial segment of $2^{<\nn}$. For $s,t\in 2^{<\nn}$,
 the \textit{infimum} of $\{s,t\}$ is  defined by $s\wedge t=\max(\mathcal{I}(s)\cap\mathcal{I}(t))$.

The \textit{lexicographical ordering} of $2^{<\nn}$, denoted by $\leq_{lex}$
is defined as follows. For every $s,t\in 2^{<\nn}$, $s\leq_{lex}t$ if either
$s\sqsubseteq t$ or $s\perp t$, $w^\frown 0\sqsubseteq s$ and $w^\frown 1\sqsubseteq t$ where
$w=s\wedge t$. Also we write $s<_{lex}t$ if $s\leq_{lex}t$ and $s\neq t$.
The lexicographical ordering is a total ordering of $2^{<\nn}$.
This ordering of $2^{<\nn}$ (which identifies $2^{<\nn}$
with $\nn$) will be called the \textit{natural ordering} of
$2^{<\nn}$. According to the above, we can write $2^{<\nn}$ in the form of a sequence $(s_n)_{n\in\nn}$, where $n<m$ if either
$|s_n|<|s_m|$ or $|s_n|=|s_m|$ and $s_n <_{lex} s_m$.

 A
\textit{dyadic subtree} is a subset $T$ of $2^{<\nn}$ such that
there is an order isomorphism $\phi:2^{<\nn}\to T$. In this case
$T$ is denoted by $T=(t_s)_{s\in 2^{<\nn}}$, where $t_s=\phi(s)$.
A dyadic subtree $(t_s)_{s\in 2^{<\nn}}$ is said to be \textit{regular}
if for every $n\in\nn$ there exists $m\in\nn$ such that $\{t_s: s\in 2^n\}\subseteq 2^m$.

Let $2^{\nn}$ be the Cantor space, i.e., the set of all infinite sequences $\sigma=(\sigma(n))_n$
consisting of elements of  $2=\{0,1\}$. If $\sigma\in 2^{\nn}$ and $n\in\nn$,
let $\sigma|n=(\sigma(1),\ldots,\sigma(n))\in 2^n$. We say that $s\in 2^n$
is an \textit{initial segment} of $\sigma\in 2^{\nn}$ if $s=\sigma|n$. We
write $s\sqsubseteq \sigma$ if $s$ is an initial segment of $\sigma$.

\subsection{The stopping time space $S^1$}The space $S^1$ is defined to be the completion of
$c_{00}(2^{<\nn})$ under the norm
\[\Big\|\sum_{s}\lambda_s e_s\Big\|_{S^1}=sup\Big( \sum_{s\in
A}|\lambda_s|\Big)\] where the supremum is taken over all
antichains $A$ of $2^{<\nn}$. For every $s\in 2^{<\nn}$, by $e_s$ we denote the
characteristic function of $\{s\}$. By the definition of the $S^1$-norm, we observe that
the family $\{e_s\}_{s\in 2^{<\nn}}$
is a 1-unconditional Schauder basis of $S^1$.
Also, for every infinite chain $C$ of $2^{<\nn}$ the subspace of $S^1$
generated by $\{e_s:s\in C\}$ is isomorphic to $c_0$, while for
every infinite antichain $A$ the corresponding one is isomorphic
to $\ell_1$.

\section{measures associated to elements of Baire-1 space.}
Our general notation and terminology is standard and it can be found, for instance in \cite{LT}.
Let $X$ be a Banach space. As we have mentioned in the introduction,
by $\mathcal{B}_1(X)$ we denote the space of all $x^{**}\in X^{**}$
such that there is a sequence $(x_n)_{n\in\nn}$ of $X$ with $x^{**}=\lim_nx_n$
in the weak star topology.
The aim of  this section is to  introduce and study the
fundamental properties of the measure $\mu_{x^{**}}$ corresponding to an element
$x^{**}\in \mathcal{B}_1(S^1)$.
We start with some general results about a Banach space with an unconditional basis.

\begin{lem}\label{leq}
Let $X$ be a Banach space with an unconditional normalized basis $(e_i)_{i\in\nn}$
and $x^{**}\in \mathcal{B}_1(X)$ such that $x^{**}(e_i^*)=0$,
for every $i\in\nn$. Then $x^{**}=0$.
\end{lem}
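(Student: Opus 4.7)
The plan is to argue by contradiction: assuming $x^{**}\ne 0$, I will extract a block subsequence equivalent to the $\ell_1$-basis, and then reach a contradiction with the fact that a weak-star convergent sequence is weakly Cauchy. Fix a bounded sequence $(x_n)\subset X$ with $x_n\to x^{**}$ weak-star in $X^{**}$, and choose $x^*\in X^*$ with $x^{**}(x^*)=1$. The hypothesis $x^{**}(e_i^*)=0$ translates into $\langle x_n,e_i^*\rangle\to 0$ for each coordinate $i$, so for any fixed $N$ the head $P_{[1,N]}x_n\to 0$ in norm (finite-dimensional convergence).

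First I would carry out a gliding-hump extraction. Choosing inductively $n_1<n_2<\cdots$ and $0=k_0<k_1<\cdots$, I can guarantee $\|P_{[1,k_{j-1}]}x_{n_j}\|<2^{-j-1}$ (by picking $n_j$ large) and $\|P_{(k_j,\infty)}x_{n_j}\|<2^{-j-1}$ (by picking $k_j$ large, using that $x_{n_j}$ is its own norm-convergent basis expansion), where $P_A$ is the coordinate projection, bounded by the unconditional constant $K$ of $(e_i)$. Setting $y_j:=P_{(k_{j-1},k_j]}x_{n_j}$ gives a block basis of $(e_i)$, hence itself a $K$-unconditional basic sequence, with $\|x_{n_j}-y_j\|\le 2^{-j}$. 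Consequently $y_j\to x^{**}$ weak-star, so $\langle y_j,x^*\rangle\to 1$, and after passing to a tail I may assume $\langle y_j,x^*\rangle\ge 1/2$ for every $j$.

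The key computation combines the unconditionality of $(y_j)$ with this uniform lower bound: for any finitely supported scalars $(a_j)$,
\[
\Big\|\sum_j a_j y_j\Big\|\ge \frac{1}{K}\Big\|\sum_j |a_j|\,y_j\Big\|\ge\frac{1}{K\|x^*\|}\sum_j|a_j|\,\langle y_j,x^*\rangle\ge\frac{1}{2K\|x^*\|}\sum_j|a_j|,
\]
so $(y_j)$ is equivalent to the unit vector basis of $\ell_1$. On the other hand, $(x_n)$ is weak-star convergent, hence weakly Cauchy, and the small perturbation transfers this to $(y_j)$. But a weakly Cauchy sequence cannot contain an $\ell_1$-subsequence: Hahn--Banach would extend the alternating $\pm 1$ functional on $\overline{\mathrm{span}}(y_j)\cong\ell_1$ to some $z^*\in X^*$ with $\langle y_j,z^*\rangle=(-1)^j$, which does not converge. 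This contradiction forces $x^{**}=0$.

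The main obstacle I anticipate is the gliding-hump step, where both the head and tail losses of $x_{n_j}$ must be simultaneously made summably small so that the resulting block $(y_j)$ both retains the weak-star limit $x^{**}$ and inherits the $K$-unconditional basic sequence structure. Once this block is in hand, the $\ell_1$-extraction is immediate from unconditionality, and the conflict with weak Cauchyness closes the argument.
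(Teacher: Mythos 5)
Your proof is correct and takes essentially the same route as the paper: the paper invokes the Bessaga--Pe\l czy\'nski selection principle to pass to a block basic sequence and then cites the dichotomy that an unconditional basic sequence is either weakly null or contains an $\ell_1$-subsequence, whereas you carry out the gliding hump and the $\ell_1$ lower estimate (via a functional norming $x^{**}$) by hand. The only difference is in the level of detail, not in the argument.
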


\begin{proof}
 Since $x^{**}\in \mathcal{B}_1(X)$
there exists a sequence $(x_n)_n$ of elements of $X$ such that
$x^{**}=\lim_n x_n$ in the weak star topology and $\lim_n e_i^*(x_n)=0$,
for every $i\in\nn$. By a classical result of Bessaga and Pelczynski , we may assume that  $(x_n)_n$
is equivalent to a block basic sequence taken with respect to $(e_i)_i$.
Since $(e_i)_i$ is an unconditional basis we have that every block sequence of $(e_i)_i$
is unconditional. But then, either $(x_n)_n$ has a subsequence which is equivalent
to the usual basis of $\ell_1$ or is weakly null. Since the basis of $\ell_1$ is not
weakly Cauchy, we have that $x^{**}=0$.
\end{proof}

\begin{prop}\label{pb1}
Let $X$ be a Banach space with a $1$-unconditional normalized basis $(e_i)_{i=1}^\infty$.
Then the space $\mathcal{B}_1(X)$ can be identified with the space of all
sequence of scalars $(a_i)_i$ such that
$\sup_n\|\sum_{i=1}^n a_i e_i\|<\infty$.
This correspondence is given by
$\mathcal{B}_1(X) \ni x^{**}\leftrightarrow (x^{**}(e_i^*))_i$.
Moreover, for every $x^{**}\in\mathcal{B}_1(X)$ we have that
$x^{**}=\lim_n\sum_{i=1}^nx^{**}(e_i^*) e_i$
in the weak star topology and
\begin{equation}\label{pb11}
\|x^{**}\|=\sup_n\bigg\|\sum_{i=1}^n x^{**}(e_i^*)e_i\bigg\|.
\end{equation}
\end{prop}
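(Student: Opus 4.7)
The plan is to identify $\mathcal{B}_1(X)$ with $\hat X := \{(a_i) : \sup_n\|\sum_{i=1}^n a_ie_i\| < \infty\}$ via the coordinate map $x^{**}\mapsto (x^{**}(e_i^*))_i$. The driving idea is that $1$-unconditionality upgrades bounded partial sums to absolute convergence against any functional; uniqueness of weak-star limits with prescribed coordinates is then supplied by Lemma~\ref{leq}.

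First I would record the easy half. For each $n$ the basis projection $P_nx=\sum_{i=1}^n e_i^*(x)e_i$ has norm one by $1$-unconditionality, and a direct computation (through $P_n^*x^*=\sum_{i=1}^n x^*(e_i)e_i^*$) shows $P_n^{**}x^{**}=\sum_{i=1}^n x^{**}(e_i^*)e_i$ for every $x^{**}\in X^{**}$. Since $\|P_n^{**}\|\le 1$, this yields $\sup_n\|\sum_{i=1}^n x^{**}(e_i^*)e_i\|\le\|x^{**}\|$ for every $x^{**}\in X^{**}$, giving one half of \eqref{pb11} and confirming that the coordinate map lands in $\hat X$.

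Next, the crucial trick. For any $(a_i)\in\hat X$ with $M=\sup_n\|\sum_{i=1}^n a_ie_i\|$ and any $x^{*}\in X^{*}$, $1$-unconditionality gives $\|\sum_{i=1}^n\varepsilon_ia_ie_i\|\le M$ for every choice of signs $\varepsilon_i\in\{-1,+1\}$; picking $\varepsilon_i=\mathrm{sign}(a_ix^{*}(e_i))$ yields $\sum_{i=1}^{\infty}|a_ix^{*}(e_i)|\le M\|x^{*}\|$. Hence the partial sums $y_n=\sum_{i=1}^n a_ie_i$ are weak-star Cauchy in $X^{**}$ and define an element $\tilde x^{**}\in X^{**}$ by $\tilde x^{**}(x^{*}):=\lim_n x^{*}(y_n)$, with $\|\tilde x^{**}\|\le M$; since $(y_n)\subset X$, automatically $\tilde x^{**}\in\mathcal{B}_1(X)$.

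To close the argument, given $x^{**}\in\mathcal{B}_1(X)$ I would apply this construction with $a_i=x^{**}(e_i^*)$, noting that $M\le\|x^{**}\|$ by the first step. The resulting $\tilde x^{**}$ has the same coordinates as $x^{**}$, and since $\mathcal{B}_1(X)$ is closed under subtraction, $\tilde x^{**}-x^{**}$ is a Baire-$1$ element annihilating every $e_j^*$; Lemma~\ref{leq} then forces $\tilde x^{**}=x^{**}$. This gives the weak-star convergence $y_n\to x^{**}$ and, combined with the first step, the norm identity \eqref{pb11}. Surjectivity of the coordinate map follows by running the same construction on any $(a_i)\in\hat X$, and injectivity is immediate from Lemma~\ref{leq}. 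The only conceptual hurdle is the absolute-convergence trick: without $1$-unconditionality one would be forced into an exchange of limits in the expansion $x^{*}(x_k)=\sum_i e_i^*(x_k)\,x^{*}(e_i)$ that can genuinely fail.
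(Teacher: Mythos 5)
Your proposal is correct and follows essentially the same route as the paper: the sign-choice trick from $1$-unconditionality to get $\sum_i|a_ix^*(e_i)|\le M\|x^*\|$, weak-star convergence of the partial sums, and Lemma~\ref{leq} to identify the limit with $x^{**}$ and to get uniqueness. The only cosmetic difference is that you obtain the bound $\|\sum_{i=1}^n x^{**}(e_i^*)e_i\|\le\|x^{**}\|$ via the biadjoint projections $P_n^{**}$, whereas the paper extracts it from the same duality computation; both are sound.
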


\begin{proof}
Let $x^{**}\in \mathcal{B}_1(X)$ and $x^*\in X^*$. For $n\in\nn$,
we choose a sequence $(\ee_i)_{i=1}^n$ of signs so that
$\sum_{i=1}^n |x^{**}(e_i^*)x^*(e_i)|=\sum_{i=1}^n \ee_ix^{**}(e_i^*)x^*(e_i)$.
Since $(e_i)_i$ is a $1$-unconditional normalized basis of $X$,
we have that $\|\sum_{i=1}^n \ee_ix^*(e_i)e_i\|\leq \|x^*\|$ and
\[\sum_{i=1}^n |x^{**}(e_i^*)x^*(e_i)|=\sum_{i=1}^n \ee_ix^{**}(e_i^*)x^*(e_i)=x^{**}
\bigg(\sum_{i=1}^n \ee_ix^*(e_i)e_i\bigg)\leq\|x^{**}\|\|x^*\|.\]
Hence, we obtain that the sequence $(\sum_{i=1}^nx^{**}(e_i^*) e_i)_n$ is weakly Cauchy and
\[\bigg\|\sum_{i=1}^n x^{**}(e_i^*)e_i\bigg\|\leq\|x^{**}\|,\,\, \text{for all}\,\, n\in\nn.\]
Let $y^{**}=w^*-\lim_n\sum_{i=1}^nx^{**}(e_i^*) e_i$.
Then we observe that  $(y^{**}-x^{**})(e_i^*)=0$, for every $i\in\nn$
and $y^{**}-x^{**}\in \mathcal{B}_1(X)$.
By Lemma \ref{leq}, we get that $y^{**}=x^{**}$. Therefore,
$x^{**}=w^*-\lim_n\sum_{i=1}^nx^{**}(e_i^*) e_i$.
By the above inequality and since $(e_i)_i$ is a monotone basis of $X$,
from the weak star lower semicontinuity of the second dual norm  we have that
\[\|x^{**}\|=\lim_n\Big\|\sum_{i=1}^n x^{**}(e_i^*)e_i\Big\|=
\sup_n\Big\|\sum_{i=1}^n x^{**}(e_i^*)e_i\Big\|.\]

Conversely, let $(a_i)_i$ be such that
$C=\sup_n\|\sum_{i=1}^n a_i e_i\|<\infty$. For every $x^*\in X^*$ and $n\in\nn$,
we choose a sequence $(\ee_i)_{i=1}^n$ of signs such that
$\sum_{i=1}^n |a_i x^*(e_i)|=x^*(\sum_{i=1}^n \ee_i a_i e_i)$.
As $(e_i)_i$ is $1$-unconditional normalized basis of $X$, we get that
$\sum_{i=1}^n |a_i x^*(e_i)|\leq C\|x^*\|$. Therefore, the sequence $(\sum_{i=1}^n a_i x^*(e_i))_n$
is weakly Cauchy. If $x^{**}=w^*-\lim_n \sum_{i=1}^n a_i e_i$,
by Lemma \ref{leq}, we get that $x^{**}$ is the unique $x^{**}\in \mathcal{B}_1(X)$ such that
and $x^{**}(e_i^*)=a_i$, for every $i\in\nn$.
\end{proof}

If $X$ is a Banach space and $M$ a subset of $X$, then the closed linear span of $M$,
denoted by $[M]$, is the smallest closed subspace of $X$ containing the set $M$.

\begin{prop}\label{dual}
Let $X$ be a Banach space with a $1$-unconditional normalized basis $(e_i)_{i=1}^\infty$.
Then the operator $J: \mathcal{B}_1(X)\to [(e_i^*)_i]^*$ with $J(x^{**})= x^{**}|_{[(e_i^*)_i]}$
is an isometry and onto.
\end{prop}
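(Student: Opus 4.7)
The plan is to leverage Proposition \ref{pb1}, which identifies $\mathcal{B}_1(X)$ with the sequences $(a_i)_i$ satisfying $\sup_n\|\sum_{i=1}^n a_i e_i\|<\infty$ and supplies the norm formula $\|x^{**}\|=\sup_n\|\sum_{i=1}^n x^{**}(e_i^*)e_i\|$. Both the isometry and surjectivity assertions will reduce to the same mechanism: the adjoint of the monotone coordinate projection $P_n:X\to[e_1,\ldots,e_n]$ maps $B_{X^*}$ into $B_{[(e_i^*)_i]}$, because for $x^*\in B_{X^*}$ the functional $\sum_{i=1}^n x^*(e_i)e_i^*$ visibly belongs to $[(e_i^*)_i]$ and has norm at most $\|P_n\|\,\|x^*\|\leq 1$ by $1$-unconditionality.

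First, $J$ is a well-defined contraction since restricting a bounded functional cannot increase its norm: $\|J(x^{**})\|\leq\|x^{**}\|$. For the reverse inequality I would fix $x^{**}\in\mathcal{B}_1(X)$ and $n\in\nn$, pick $x_n^*\in B_{X^*}$ almost attaining $\|\sum_{i=1}^n x^{**}(e_i^*)e_i\|$, and feed the projected functional $y_n^*=\sum_{i=1}^n x_n^*(e_i)e_i^*\in B_{[(e_i^*)_i]}$ into $J(x^{**})$. A direct computation gives
\[
J(x^{**})(y_n^*)=\sum_{i=1}^n x^{**}(e_i^*)\,x_n^*(e_i)=x_n^*\Big(\sum_{i=1}^n x^{**}(e_i^*)e_i\Big),
\]
so $\|J(x^{**})\|\geq\|\sum_{i=1}^n x^{**}(e_i^*)e_i\|$ for every $n$, and taking the supremum in $n$ gives $\|J(x^{**})\|\geq\|x^{**}\|$ by Proposition \ref{pb1}.

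For surjectivity, given $\phi\in[(e_i^*)_i]^*$ I would set $a_i=\phi(e_i^*)$ and verify the boundedness hypothesis of Proposition \ref{pb1}. For any $x^*\in B_{X^*}$, the projection argument gives $\sum_{i=1}^n x^*(e_i)e_i^*\in B_{[(e_i^*)_i]}$, hence
\[
\Big|\sum_{i=1}^n a_i x^*(e_i)\Big|=\Big|\phi\Big(\sum_{i=1}^n x^*(e_i)e_i^*\Big)\Big|\leq\|\phi\|,
\]
so $\|\sum_{i=1}^n a_i e_i\|\leq\|\phi\|$ for every $n$. Proposition \ref{pb1} then furnishes $x^{**}\in\mathcal{B}_1(X)$ with $x^{**}(e_i^*)=a_i=\phi(e_i^*)$; since $J(x^{**})$ and $\phi$ are continuous linear functionals on $[(e_i^*)_i]$ agreeing on the total set $(e_i^*)_i$, they coincide, giving $J(x^{**})=\phi$.

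The only conceptual step is the monotone-projection observation, which is what lets functionals in $[(e_i^*)_i]$ detect the full second-dual norm; everything else is a direct application of Proposition \ref{pb1}, so I do not anticipate any serious obstacle.
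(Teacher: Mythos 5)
Your proof is correct and follows essentially the same route as the paper: the paper derives the isometry as an "immediate consequence" of the norm formula $\|x^{**}\|=\sup_n\|\sum_{i=1}^n x^{**}(e_i^*)e_i\|$ from Proposition \ref{pb1} and gets surjectivity from the "easily observed" bound $\sup_n\|\sum_{i=1}^n f(e_i^*)e_i\|\leq\|f\|$, and your monotone-projection computation with $P_n^*x^*=\sum_{i=1}^n x^*(e_i)e_i^*$ is exactly the mechanism behind both of those elided steps.
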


\begin{proof}
The fact that the operator $J$ is an isometry, is an immediate consequence of (\ref{pb11}).
We proceed to show that $J$ is onto. Let $f\in [(e_i^*)_i]^*$. Then we easily observe that
$\sup_n\|\sum_{i=1}^n f(e_i^*)e_i\|\leq\|f\|$. Hence by Proposition \ref{pb1}, there is $x^{**}\in\mathcal{B}_1(X)$
such that $x^{**}=w^*-\lim_n\sum_{i=1}^n f(e_i^*)e_i$. Therefore $x^{**}(e_i^*)=f(e_i^*)$, for all $i$ and so,
$J(x^{**})=x^{**}|_{[(e_i^*)_i]}=f$.
\end{proof}

\begin{rem}\label{r1}
By Proposition \ref{pb1}, we observe that for every $x^{**}\in\mathcal{B}_1(S^1)$,
\[\|x^{**}\|=\sup_{n\geq 0}\bigg\|\sum_{|s|\leq n} x^{**}(e_s^*)e_s\bigg\|=\sup_{A\in\mathcal{A}}\sum_{s\in A}|x^{**}(e_s^*)|.\]
\end{rem}

We are now ready to give the definition of $\mu_{x^{**}}$, the measure associated with an element $x^{**}$ in $\mathcal{B}_1(S^1)$. We first introduce some simple notation. For every  $x^{**}\in \mathcal{B}_1(S^1)$ and for every $D\subseteq 2^{<\nn}$ we set
\begin{equation*}
x^{**}|D(e_s^*)=
\begin{cases}
x^{**}(e^*_s),   & \text{if}\,\,\,s\in D\\
0, &  \text{if}\,\,\, s\in 2^{<\nn}\setminus D.
\end{cases}
\end{equation*}
By Remark \ref{r1}, we easily observe that $x^{**}|D\in\mathcal{B}_1(S^1)$ and for every
$D_1, D_2\subseteq 2^{<\nn}$ with $D_1\subseteq D_2$,
\begin{equation}\label{equat1}
\|x^{**}|D_1\|\leq \|x^{**}|D_2\|\leq\|x^{**}\|,
\end{equation}
and if $D_1\perp D_2$, then
\begin{equation}\label{equat2}
\|x^{**}|D_1\cup D_2\|=\|x^{**}|D_1\|+\|x^{**}|D_2\|.
\end{equation}

For every $t\in 2^{<\nn}$ we set $V_t=\{\sigma\in 2^{\nn}: t\sqsubseteq\sigma\}$. Recall that $\{V_t:\;t\in 2^{<\nn}\}$ is the usual basis of $2^{\nn}$, consisting of clopen sets. Also, for $t\in 2^{<\nn}$ and $m\geq 0$ we set
\[T_t^m=\{s\in 2^{<\nn}: t\sqsubseteq s,\, |s|\geq m\}.\]
By (\ref{equat2}), it is easy to see that for every $t\in 2^{<\nn}$,
$$\inf_m\|x^{**}|T_t^m\| = \inf_m\|x^{**}|T_{t^\smallfrown 0}^m\| + \inf_m\|x^{**}|T_{t^\smallfrown 1}^m\|. $$
Therefore, by a classical result of Caratheodory, there exists a unique finite positive Borel measure $\mu_{x^{**}}$ on $2^{\nn}$ with
$\mu_{x^{**}}(V_t) = \inf_m\|x^{**}|T_t^m\|$, for every $t\in 2^{<\nn}$.

\begin{defn}
 Let $x^{**}\in\mathcal{B}_1(S^1)$. We define ${\mu}_{x^{**}}$ to be the unique finite positive Borel measure on $2^{\nn}$ so that for all $t\in 2^{\nn}$,
\[{\mu}_{x^{**}}(V_t)=\inf_m\|x^{**}|T_t^m\|.\]
\end{defn}

\begin{rem}
The technique used, given an $x^{**}\in\mathcal{B}_1(S^1)$, to define the measure $\mu_{x^{**}}$, is among the same lines as the one used in \cite{AAK}.
\end{rem}

\begin{rem}\label{r2}
Let $x^{**}\in\mathcal{B}_1(S^1)$. By (\ref{equat2}) we observe that for every finite antichain $A$ of $2^{<\nn}$
and $m\geq 0$,
$\|x^{**}|\cup_{t\in A}T_t^m\|=\sum_{t\in A}\|x^{**}|T_t^m\|$.
Therefore, $\mu_{x^{**}}(\cup_{t\in A}V_t)=\inf_m\|x^{**}|\cup_{t\in A}T_t^m\|$,
for every finite antichain $A$ of $2^{<\nn}$.
\end{rem}

The following proposition is easily established.

\begin{prop}\label{basprop}
The following hold.
\begin{enumerate}
\item[(i)] For every
$x^{**}\in\mathcal{B}_1(S^1)$ and $\lambda\in\rr$, $\mu_{\lambda
x^{**}}=|\lambda|\mu_{x^{**}}$.
\item[(ii)]  For every $x^{**}, y^{**}\in\mathcal{B}_1(S^1)$, $\mu_{x^{**}+y^{**}} \leq \mu_{x^{**}} + \mu_{y^{**}}$.
\item [(iii)] For every $x^{**}\in\mathcal{B}_1(S^1)$,  $\mu_{x^{**}}(2^\nn)=d(x^{**}, S^1)=\inf\{\|x^{**}-x\|: x\in S^1\}$.
\item [(iv)] For every $x^{**}, y^{**}\in\mathcal{B}_1(S^1)$,
$\|\mu_{x^{**}}-\mu_{y^{**}}\|\leq \|x^{**}-y^{**}\|$.

\end{enumerate}
\end{prop}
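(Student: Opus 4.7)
The plan is to verify each assertion first on the generating clopen basis $\{V_t\}_{t\in 2^{<\nn}}$ for $2^\nn$, using the characterization $\mu_{x^{**}}(V_t)=\inf_m\|x^{**}|T_t^m\|$, and then extend to arbitrary Borel sets by the uniqueness of a finite positive Borel measure determined by its values on the clopen algebra. For (i), the identity $\|\lambda x^{**}|T_t^m\|=|\lambda|\cdot\|x^{**}|T_t^m\|$ is immediate from the definition of the norm on $\mathcal{B}_1(S^1)$, and taking $\inf_m$ yields $\mu_{\lambda x^{**}}(V_t)=|\lambda|\mu_{x^{**}}(V_t)$. For (ii), the triangle inequality gives $\|(x^{**}+y^{**})|T_t^m\|\le\|x^{**}|T_t^m\|+\|y^{**}|T_t^m\|$, and since each of these sequences is decreasing in $m$ by (\ref{equat1}), $\inf$ equals $\lim$, and the limit commutes with the sum, giving $\mu_{x^{**}+y^{**}}(V_t)\le\mu_{x^{**}}(V_t)+\mu_{y^{**}}(V_t)$. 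The extension of this pointwise inequality from the clopen basis to all Borel sets is a standard consequence of the fact that a signed Borel measure on $2^\nn$ which is nonnegative on every clopen set is nonnegative everywhere.

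The substantive part is (iii), the identity $\mu_{x^{**}}(2^\nn)=d(x^{**},S^1)$, and I expect this to be the main obstacle. For the upper bound, I introduce the finitely supported truncation $Q_M x^{**}=x^{**}|\{s:|s|<M\}\in S^1$; then $x^{**}-Q_M x^{**}=x^{**}|T_\varnothing^M$, so
\[
d(x^{**},S^1)\le\inf_M\|x^{**}-Q_M x^{**}\|=\inf_M\|x^{**}|T_\varnothing^M\|=\mu_{x^{**}}(V_\varnothing)=\mu_{x^{**}}(2^\nn).
\]
For the reverse inequality, let $x\in S^1$ and $\delta>0$. Since $(e_s)$ is a Schauder basis of $S^1$ (Proposition \ref{pb1} applies to $X=S^1$), there is a finite $F\subseteq 2^{<\nn}$ with $\|x-x|F\|<\delta$. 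Choosing $M>\max_{s\in F}|s|$ makes $F\perp T_\varnothing^M$, and the $1$-unconditionality expressed in (\ref{equat1}) gives $\|x|T_\varnothing^M\|=\|(x-x|F)|T_\varnothing^M\|\le\|x-x|F\|<\delta$. Another application of (\ref{equat1}) yields
\[
\|x^{**}|T_\varnothing^M\|\le\|(x^{**}-x)|T_\varnothing^M\|+\|x|T_\varnothing^M\|\le\|x^{**}-x\|+\delta,
\]
so $\mu_{x^{**}}(2^\nn)\le\|x^{**}-x\|+\delta$, and letting $\delta\to 0$ and taking infimum over $x\in S^1$ completes the proof.

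Finally, (iv) is a formal consequence of the first three parts. By (i) with $\lambda=-1$ we have $\mu_{y^{**}-x^{**}}=\mu_{x^{**}-y^{**}}$, and by (ii) applied in both directions we obtain the pointwise bounds $\mu_{x^{**}}-\mu_{y^{**}}\le\mu_{x^{**}-y^{**}}$ and $\mu_{y^{**}}-\mu_{x^{**}}\le\mu_{x^{**}-y^{**}}$ on Borel sets. Therefore
\[
\|\mu_{x^{**}}-\mu_{y^{**}}\|=\sup_B|\mu_{x^{**}}(B)-\mu_{y^{**}}(B)|\le\mu_{x^{**}-y^{**}}(2^\nn)=d(x^{**}-y^{**},S^1)\le\|x^{**}-y^{**}\|,
\]
where the penultimate equality uses (iii). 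The only non-routine issue to document carefully is the clopen-to-Borel extension in (ii), which I would handle by recording that finite positive Borel measures on the Cantor set are determined by their clopen values via Carathéodory extension (exactly as invoked already in the construction of $\mu_{x^{**}}$).
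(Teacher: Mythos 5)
Your proof is correct. The paper offers no argument for this proposition (it is stated as ``easily established''), and your verification --- checking (i) and (ii) on the clopen basis and extending by regularity, proving (iii) via the truncations $x^{**}|\{s:|s|<M\}$ together with the $1$-unconditionality estimate (\ref{equat1}), and deducing (iv) formally from (i)--(iii) using the paper's definition of $\|\cdot\|$ on $\mathcal{M}(2^\nn)$ as a supremum over Borel sets --- is exactly the standard argument the author leaves to the reader.
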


\begin{prop}\label{property}
Let $x^{**}\in\mathcal{B}_1(S^1)$. Then for every antichain $A$ of $2^{<\nn}$,
\[\mu_{x^{**}}(\cup_{t\in A}V_t)=\inf_m\|x^{**}|\cup_{t\in A}T_t^m\|.\]
\end{prop}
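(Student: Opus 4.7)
The plan is to reduce to the case where $A$ is an infinite antichain, writing $A = \{t_n : n \in \nn\}$, since the finite case is precisely Remark \ref{r2}. Set $D_m = \cup_{n\in\nn}T_{t_n}^m$ and $D_m^N = \cup_{n\leq N}T_{t_n}^m$. The proof will show the two inequalities separately, using Remark \ref{r2} on finite initial subantichains $A_N=\{t_1,\dots,t_N\}$ in one direction and the antichain characterization of the norm from Remark \ref{r1} in the other.

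For the inequality $\mu_{x^{**}}(\cup_n V_{t_n})\leq \inf_m\|x^{**}|D_m\|$, I would observe that by (\ref{equat1}) and Remark \ref{r2},
\[\mu_{x^{**}}\Bigl(\bigcup_{n\leq N}V_{t_n}\Bigr)=\inf_m\|x^{**}|D_m^N\|\leq \inf_m\|x^{**}|D_m\|.\]
Since $\mu_{x^{**}}$ is countably additive and $\{V_{t_n}\}_{n\in\nn}$ are pairwise disjoint (as $A$ is an antichain), letting $N\to\infty$ yields $\mu_{x^{**}}(\cup_n V_{t_n})\leq \inf_m\|x^{**}|D_m\|$.

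For the reverse inequality, I would use Remark \ref{r1} to write, for any fixed $m$,
\[\|x^{**}|D_m\|=\sup_{B\in\mathcal{A}}\sum_{s\in B\cap D_m}|x^{**}(e_s^*)|.\]
Since the sets $T_{t_n}^m$ are pairwise disjoint (antichain property) and each $B\cap T_{t_n}^m$ is itself a finite antichain contained in $T_{t_n}^m$, for every finite antichain $B$ only finitely many $n$ contribute and
\[\sum_{s\in B\cap D_m}|x^{**}(e_s^*)|=\sum_n\sum_{s\in B\cap T_{t_n}^m}|x^{**}(e_s^*)|\leq \sum_n\|x^{**}|T_{t_n}^m\|.\]
Taking the supremum over $B$ gives $\|x^{**}|D_m\|\leq \sum_n\|x^{**}|T_{t_n}^m\|$. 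Now I would take $\inf_m$ on both sides. The sequence $(\|x^{**}|T_{t_n}^m\|)_m$ is non-increasing by (\ref{equat1}), with limit $\mu_{x^{**}}(V_{t_n})$. Moreover, by iterated application of (\ref{equat2}), $\sum_{n\leq N}\|x^{**}|T_{t_n}^m\|=\|x^{**}|D_m^N\|\leq\|x^{**}\|$, so the partial sums are uniformly bounded and $\sum_n \|x^{**}|T_{t_n}^m\|<\infty$ for every $m$. The dominated convergence theorem (applied on counting measure on $\nn$, with dominating function $n\mapsto \|x^{**}|T_{t_n}^0\|$) then gives
\[\inf_m\sum_n\|x^{**}|T_{t_n}^m\|=\sum_n\inf_m\|x^{**}|T_{t_n}^m\|=\sum_n\mu_{x^{**}}(V_{t_n})=\mu_{x^{**}}\Bigl(\bigcup_n V_{t_n}\Bigr),\]
completing the reverse inequality.

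The only delicate point is the interchange of $\inf_m$ with the infinite sum in the upper bound; everything else is a straightforward application of Remark \ref{r2} and the defining properties (\ref{equat1}) and (\ref{equat2}). The interchange is routine once one notices that the finite additivity of $D\mapsto \|x^{**}|D\|$ on disjoint tree pieces, recorded in (\ref{equat2}), gives a uniform summable majorant $\sum_n\|x^{**}|T_{t_n}^0\|\leq\|x^{**}\|$.
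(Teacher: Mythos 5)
Your proof is correct, and the easy inequality is handled exactly as in the paper (finite subantichains via Remark \ref{r2}, then countable additivity of $\mu_{x^{**}}$). For the reverse inequality, however, you take a somewhat different route. The paper isolates the claim that $\lim_j\|x^{**}|\cup_{i=j}^\infty T_{t_i}^0\|=0$, proves it by contradiction (gluing finite antichains $A_k$ from the tails into one infinite antichain $B$ to violate $\|x^{**}\|<\infty$ via Remark \ref{r1}), and then splits $\inf_m\|x^{**}|\cup_i T_{t_i}^m\|$ into a finite head, which is controlled by $\mu_{x^{**}}$, plus a small tail. You instead dominate $\|x^{**}|D_m\|$ by the series $\sum_n\|x^{**}|T_{t_n}^m\|$ and interchange $\inf_m$ with the sum by dominated convergence on counting measure, using the majorant $n\mapsto\|x^{**}|T_{t_n}^0\|$, whose summability you extract directly from the finite additivity in (\ref{equat2}). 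The two arguments rest on the same underlying fact --- the uniform bound $\sum_{n\le N}\|x^{**}|T_{t_n}^0\|\le\|x^{**}\|$ --- but yours makes the summable majorant explicit and avoids the contradiction argument, at the cost of invoking a limit-interchange theorem where the paper does elementary head-plus-tail bookkeeping. One cosmetic point: in Remark \ref{r1} the supremum runs over all antichains, not just finite ones, so your aside that ``only finitely many $n$ contribute'' is not needed and not quite accurate for infinite $B$; the identity $\sum_{s\in B\cap D_m}=\sum_n\sum_{s\in B\cap T_{t_n}^m}$ holds anyway because the $T_{t_n}^m$ are pairwise disjoint and all terms are non-negative.
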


\begin{proof}
Let $A$ be an antichain of $2^{<\nn}$. If $A$ is finite, then the conclusion follows
by Remark \ref{r2}. We assume that $A$ is infinite and let $A=\{t_i\}_{i=1}^\infty$ be an enumeration of $A$.
Since  $\lim_j\mu_{x^{**}}(\cup_{i=1}^jV_{t_i})=\mu_{x^{**}}(\cup_{t\in A}V_t)$,
we have that
\begin{equation}\label{property1}
\mu_{x^{**}}(\cup_{t\in A}V_t)=\lim_j\inf_m\|x^{**}|\cup_{i=1}^jT_{t_i}^m\|
\leq\inf_m\|x^{**}|\cup_{t\in A}T_t^m\|.
\end{equation}
We claim that $\lim_j\|x^{**}|\cup_{i=j}^\infty T_{t_i}^0\|=0$. Indeed,
if the claim is not true, by Remark \ref{r1} it is easy to see that there exist
$\ee>0$, a strictly increasing sequence $1\leq j_1<j_2<...$ in $\nn$ and a sequence
$(A_k)_{k=1}^\infty$ of finite antichains of $2^{<\nn}$, such that the following hold.
\begin{enumerate}
\item[(a)] For every $k\geq 1$, $A_k\subseteq\cup_{i=j_k+1}^{j_k}T_{t_i}^0$.
\item[(b)] For every $k\geq 1$, $\sum_{s\in A_k}|x^{**}(e^*_s)|>\ee$.
\end{enumerate}
Clearly the set $B=\cup_{k=1}^\infty A_k$ is an antichain. Hence by (b), we get that
\[\|x^{**}\|\geq\sum_{s\in B}|x^{**}(e^*_s)|=\sum_{k=1}^\infty\sum_{s\in A_k}|x^{**}(e^*_s)|=\infty,\]
a contradiction.

Let $\ee>0$. By the claim, we choose $j_0\in\nn$ with $\|x^{**}|\cup_{i=j_0+1}^\infty T_{t_i}^0\|<\ee$.
Then we obtain that
\[\begin{split}
\inf_m\|x^{**}|\cup_{i=1}^\infty T_{t_i}^m\|&=
\inf_m\|x^{**}|\cup_{i=1}^{j_0} T_{t_i}^m\|+\inf_m\|x^{**}|\cup_{i=j_0+1}^\infty T_{t_i}^m\|\\&\leq
\mu_{x^{**}}(\cup_{i=1}^{j_0}V_{t_i})+\|x^{**}|\cup_{i=j_0+1}^\infty T_{t_i}^0\|\\&<
\mu_{x^{**}}(\cup_{i=1}^{j_0}V_{t_i})+\ee \leq\mu_{x^{**}}(\cup_{i=1}^\infty V_{t_i})+\ee.
\end{split}\]
Therefore,
\begin{equation}\label{property2}
\inf_m\|x^{**}|\cup_{t\in A} T_t^m\|\leq \mu_{x^{**}}(\cup_{t\in A}V_{t_i}).
\end{equation}
By (\ref{property1}) and (\ref{property2}), the conclusion of proposition follows.
\end{proof}

\section{operators preserving a copy of $S^1$.}

This section is devoted to the proof of Theorems 1 and 2 and their corollaries. The next lemma can be found in \cite[Lemma 41, page 4252]{AAK}.
\begin{lem}\label{ineq}
Let $(\alpha_s)_{s\in 2^{<\nn}}$ , $(\lambda_s)_{s\in 2^{<\nn}}$
be families of non negative real numbers and let $n\geq 0$. Then
there exists a maximal  antichain $A$ of $ 2^{\leqslant n}$ and a
family of branches $(b_t)_{t\in A}$ of $2^{\leqslant n}$ such that
 $\sum_{|s|\leq n}\lambda_s\alpha_s\leq \sum_{t\in
A}(\sum_{s\in b_t}\alpha_s)\lambda_t$ and $t\in b_t$, for all
$t\in A$. Therefore  if $\sum_{n=1}^{\infty}\alpha_{\sigma|n}\leq
C$,  for all $\sigma\in 2^{\nn}$, then for each  $n\geq 0$ there
 is an antichain $A$ of $2^{\leqslant n}$ such that
$\sum_{|s|\leq n}\lambda_s\alpha_s\leq C\sum_{s\in A}\lambda_s$.
\end{lem}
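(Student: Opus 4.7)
I would prove the first assertion by induction on $n\geq 0$. The case $n=0$ is immediate with $A=\{\varnothing\}$ and $b_\varnothing=\{\varnothing\}$. For the inductive step, apply the inductive hypothesis separately to the two subtrees of $2^{\leqslant n}$ rooted at $0$ and $1$ (each, after the obvious shift of indices, a copy of $2^{\leqslant n-1}$) to obtain, for each $j\in\{0,1\}$, a maximal antichain $A_j$ of the subtree rooted at $j$ and a family of branches $(\tilde{b}_t)_{t\in A_j}$ of that subtree with $t\in\tilde{b}_t$ satisfying
\[
\sum_{j\sqsubseteq s,\,|s|\leqslant n}\lambda_s\alpha_s \;\leq\; \sum_{t\in A_j}\lambda_t\sum_{s\in\tilde{b}_t}\alpha_s.
\]

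The main question is how to splice the two subtree constructions together at the root. Set $S:=\sum_{t\in A_0\cup A_1}\lambda_t$ and split into two cases. If $\lambda_\varnothing\leq S$, take $A:=A_0\cup A_1$ (still a maximal antichain of $2^{\leqslant n}$) and $b_t:=\{\varnothing\}\cup\tilde{b}_t$ for each $t\in A$; the extra contribution $\alpha_\varnothing S\geq\alpha_\varnothing\lambda_\varnothing$ on the right absorbs the root term $\lambda_\varnothing\alpha_\varnothing$ on the left, and the two inductive inequalities handle the rest. If instead $\lambda_\varnothing>S$, pick $t^*\in A_0\cup A_1$ maximizing $\sum_{s\in\tilde{b}_{t^*}}\alpha_s$, and take $A:=\{\varnothing\}$ with $b_\varnothing:=\{\varnothing\}\cup\tilde{b}_{t^*}$; then
\[
\lambda_\varnothing\sum_{s\in\tilde{b}_{t^*}}\alpha_s \;\geq\; S\cdot\max_{t\in A_0\cup A_1}\sum_{s\in\tilde{b}_t}\alpha_s \;\geq\; \sum_{j\in\{0,1\}}\sum_{t\in A_j}\lambda_t\sum_{s\in\tilde{b}_t}\alpha_s,
\]
so that, together with the root contribution $\lambda_\varnothing\alpha_\varnothing$, the right-hand side dominates $\sum_{|s|\leqslant n}\lambda_s\alpha_s$ via the two inductive bounds.

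The second assertion follows immediately from the first: every branch $b_t$ of $2^{\leqslant n}$ has the form $\{\sigma|k:0\leq k\leq n\}$ for some $\sigma\in 2^{\nn}$, so the summability hypothesis $\sum_{k}\alpha_{\sigma|k}\leq C$ bounds $\sum_{s\in b_t}\alpha_s$ by $C$, and the first assertion gives $\sum_{|s|\leqslant n}\lambda_s\alpha_s\leq C\sum_{t\in A}\lambda_t$.

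The step I expect to be the main obstacle is precisely the dichotomy in the inductive step. Neither of the two natural merges at the root works universally: a uniform choice $A=A_0\cup A_1$ fails whenever $\lambda_\varnothing$ is large, and a uniform choice $A=\{\varnothing\}$ fails whenever $\lambda_\varnothing$ is small. The trick is to be willing to collapse the antichain to the singleton $\{\varnothing\}$ in the ``heavy root'' case, compensating for the collapsed antichain mass by selecting the single downward branch that carries the heaviest $\alpha$-mass.
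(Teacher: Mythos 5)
Your proof is correct. The paper does not actually prove this lemma --- it is imported verbatim from \cite[Lemma 41]{AAK} --- but your induction on $n$, splicing the two subtree solutions at the root via the dichotomy $\lambda_\varnothing\leq S$ (keep $A_0\cup A_1$ and prepend $\varnothing$ to every branch) versus $\lambda_\varnothing>S$ (collapse to $A=\{\varnothing\}$ and follow the $\alpha$-heaviest branch), is the standard argument for this statement and each step checks out. One pedantic remark on the second assertion: as literally written the hypothesis $\sum_{n=1}^{\infty}\alpha_{\sigma|n}\leq C$ omits $\alpha_{\sigma|0}=\alpha_\varnothing$, so a branch sum is only bounded by $C+\alpha_\varnothing$; this is an off-by-one in the statement (in the paper's application, e.g.\ condition (iii) of Proposition \ref{eq}, the root term is included in the bound $C$), not a gap in your argument.
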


\begin{prop}\label{eq}
Let $X$ be a Banach space and $(x_s)_{s\in 2^{<\nn}}$ be a family of
elements of $X$ with the following properties.
 \begin{enumerate}
\item[(i)] $(x_s)_{s\in 2^{<\nn}}$ is a $K$-unconditional basic family.
\item[(ii)] There is a constant $c>0$ such that for each finite antichain $A$ of $2^{<\nn}$,
and each family $(\lambda_s)_{s\in A}$ of scalars, we have
$\|\sum_{s\in A}\lambda_sx_s\|\geq c\sum_{s\in A}|\lambda_s|$.
\item[(iii)] There is a constant $C>0$ such
that for every  $\sigma\in 2^\nn$, $n\geq 0$ and every sequence of scalars $(a_k)_{k=0}^n$,
$\|\sum_{k=0}^n a_k x_{\sigma|k}\|\leq C\max_{0\leq k\leq n}|a_k|$.
\end{enumerate}
Then the family $(x_s)_{s\in 2^{<\nn}}$ is
equivalent to the basis $(e_s)_{s\in 2^{<\nn}}$ of $S^1$. In particular for every $n\geq 0$
and every family of scalars $(\lambda_s)_{|s|\leq n}$,
\[\frac{c}{K}\bigg\|\sum_{|s|\leq n}\lambda_se_s\bigg\|\leq \bigg\|\sum_{|s|\leq n}\lambda_sx_s\bigg\|
\leq C\bigg\|\sum_{|s|\leq n}\lambda_se_s\bigg\|.\]
\end{prop}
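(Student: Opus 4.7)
The plan is to establish the two inequalities separately. The lower bound is a direct consequence of (i) and (ii); the upper bound will use (iii) together with the combinatorial Lemma \ref{ineq} and a Hahn--Banach norming argument, and this is where all the actual content lies.

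For the lower bound, fix $n\geq 0$, scalars $(\lambda_s)_{|s|\leq n}$, and an arbitrary antichain $A\subseteq\{s\in 2^{<\nn}:|s|\leq n\}$. The $K$-unconditionality in (i) gives the projection estimate
\[
\Big\|\sum_{s\in A}\lambda_s x_s\Big\|\leq K\Big\|\sum_{|s|\leq n}\lambda_s x_s\Big\|,
\]
while (ii) gives $c\sum_{s\in A}|\lambda_s|\leq\|\sum_{s\in A}\lambda_s x_s\|$. Combining and taking the supremum over antichains $A$, which by definition of the $S^1$-norm equals $\|\sum_{|s|\leq n}\lambda_s e_s\|_{S^1}$, yields the left-hand inequality with constant $c/K$.

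For the upper bound, choose by Hahn--Banach a functional $f\in X^*$ with $\|f\|=1$ and $f\bigl(\sum_{|s|\leq n}\lambda_s x_s\bigr)=\|\sum_{|s|\leq n}\lambda_s x_s\|$, and set $\alpha_s=|f(x_s)|$. Then
\[
\Big\|\sum_{|s|\leq n}\lambda_s x_s\Big\|=\sum_{|s|\leq n}\lambda_s f(x_s)\leq\sum_{|s|\leq n}|\lambda_s|\alpha_s.
\]
The key step is to show that $\sum_{k=0}^n\alpha_{\sigma|k}\leq C$ for every $\sigma\in 2^{\nn}$. For this, choose signs $\ee_k=\mathrm{sign}(f(x_{\sigma|k}))$ and invoke (iii):
\[
\sum_{k=0}^n\alpha_{\sigma|k}=f\Big(\sum_{k=0}^n\ee_k x_{\sigma|k}\Big)\leq\Big\|\sum_{k=0}^n\ee_k x_{\sigma|k}\Big\|\leq C.
\]
Now Lemma \ref{ineq} applied to the non-negative families $(\alpha_s)$ and $(|\lambda_s|)$ yields an antichain $A$ of $2^{\leq n}$ with $\sum_{|s|\leq n}|\lambda_s|\alpha_s\leq C\sum_{s\in A}|\lambda_s|$, and the latter sum is at most $\|\sum_{|s|\leq n}\lambda_s e_s\|_{S^1}$, giving the right-hand inequality with constant $C$.

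The only delicate point is the chain bound $\sum_k\alpha_{\sigma|k}\leq C$: it is the combination of the sign-choice $\ee_k=\mathrm{sign}(f(x_{\sigma|k}))$ with the linearity of $f$ that converts the apparently weak $c_0$-type bound along branches in (iii) into the hypothesis needed by Lemma \ref{ineq}. Note that the constant $K$ does not leak into the upper bound, because $f$ is applied directly to the signed combination before any unconditionality argument is invoked.
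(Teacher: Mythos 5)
Your proposal is correct and follows essentially the same route as the paper: the lower bound via $K$-unconditional projection onto an antichain combined with (ii), and the upper bound via a norming functional, the sign trick along branches to turn (iii) into the chain bound $\sum_k|f(x_{\sigma|k})|\leq C$, and Lemma \ref{ineq} to pass to an antichain. The only cosmetic difference is that you invoke the second ("therefore") part of Lemma \ref{ineq} directly, whereas the paper uses the first part and then bounds each branch sum by $C$; these are the same argument.
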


\begin{proof}
First we show the upper $S^1$-estimate. Fix a family of scalars $(\lambda_s)_{|s|\leq n}$.
Let $x^*\in (S^1)^*$, $\|x^*\|=1$.
By Lemma \ref{ineq}, (with $|x^*(x_s)|$ in place of $\alpha_s$),
there is an antichain $A\subseteq 2^{\leqslant n}$ and a
family of branches $(b_t)_{t\in A}$ of $2^{\leqslant n}$ such that
\begin{equation}\label{eq1}
\bigg|\sum_{|s|\leq n}\lambda_s x^*(x_s)\bigg|\leq\sum_{|s|\leq n}|\lambda_s| |x^*(x_s)|\leq
\sum_{t\in A}\bigg(\sum_{s\in b_t} |x^*(x_s)|\bigg)|\lambda_t|.
\end{equation}
By (iii), we have that for every $t\in A$, $\sum_{s\in b_t} |x^*(x_s)|\leq C$.
Hence by (\ref{eq1}), we obtain that
\[\bigg|\sum_{|s|\leq n}\lambda_s x^*(x_s)\bigg|\leq C\sum_{t\in A}|\lambda_t|.\]
This yields that $\|\sum_{|s|\leq n}\lambda_sx_s\|\leq C\|\sum_{|s|\leq n}\lambda_se_s\|$.

We now proceed to show the lower $S^1$-estimate. Let $A$ be an antichain of $2^{\leqslant n}$ such that
$\|\sum_{|s|\leq n}\lambda_se_s\|=\sum_{s\in A}|\lambda_s|$. Then by (i) and (ii), we obtain that
\[\frac{c}{K}\bigg\|\sum_{|s|\leq n}\lambda_se_s\bigg\|=
\frac{c}{K}\sum_{s\in A}|\lambda_s|\leq\frac{1}{K}\bigg\|\sum_{s\in A}\lambda_sx_s\bigg\|
\leq\bigg\|\sum_{|s|\leq n}\lambda_sx_s\bigg\|.\]
\end{proof}

\begin{rem}\label{r3}
By Proposition \ref{r3}, it is easy to see that if $(x_s)_s$ is a family in $X$
equivalent to the $S^1$-basis and $(t_s)_s$ is a dyadic subtree of $2^{<\nn}$,
then the family $(x_{t_s})_s$ is equivalent to the $S^1$-basis as well.
\end{rem}

\begin{lem}\label{lmes}
Let $x^{**}\in\mathcal{B}_1(S^1)$ and $(x_n)_n$ be a sequence in $S^1$ which is ${weak}^*$ convergent to $x^{**}$.
If there exists an antichain $B$ of $2^{<\nn}$ such that $\mu_{x^{**}}(\cup_{s\in B} V_s)>\rho>0$, then
for every $k\in\nn$, there exist a finite antichain $A$ of $2^{<\nn}$ with $A\subseteq\cup_{s\in B}T_s$
and $l>k$ such that
$\sum_{s\in A}|e_s^*(x_l-x_k)|>\rho$.
\end{lem}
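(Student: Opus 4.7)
Fix $k\in\nn$. The plan is to use Proposition \ref{property}, which expresses $\mu_{x^{**}}(\bigcup_{s\in B}V_s)$ as $\inf_m\|x^{**}|\bigcup_{s\in B}T_s^m\|$, in order to locate a finite antichain in $\bigcup_{s\in B}T_s^m$ carrying most of the mass of $x^{**}$, where $m$ is chosen so large that the contribution of $x_k$ to any antichain at level $\geq m$ is negligible. A finite weak$^*$ approximation step will then transfer this mass to $x_l$ for some $l>k$.

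Choose $\rho'\in(\rho,\mu_{x^{**}}(\bigcup_{s\in B}V_s))$ and set $\ee=(\rho'-\rho)/2$. Since $(e_s)_{s\in 2^{<\nn}}$ is a Schauder basis of $S^1$ in the natural ordering and $\{s:|s|<m\}$ is an initial segment of this ordering (namely the first $2^m-1$ basis vectors), I can pick $m$ large enough that $\|v_k\|_{S^1}<\ee$, where $v_k:=x_k-\sum_{|s|<m}e_s^*(x_k)e_s$. Proposition \ref{property} then gives
\[\|x^{**}|\!\bigcup_{s\in B}T_s^m\|\ \geq\ \mu_{x^{**}}(\bigcup_{s\in B}V_s)\ >\ \rho',\]
and Remark \ref{r1}, applied to the element $x^{**}|\bigcup_{s\in B}T_s^m$ of $\mathcal{B}_1(S^1)$, yields a finite antichain $A\subseteq\bigcup_{s\in B}T_s^m$ with $\sum_{s\in A}|x^{**}(e_s^*)|>\rho'$.

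Because $A$ is finite and $x_n\to x^{**}$ in the weak$^*$ topology, $e_s^*(x_n)\to x^{**}(e_s^*)$ for each $s\in A$, so some $l>k$ satisfies $\sum_{s\in A}|e_s^*(x_l)-x^{**}(e_s^*)|<\ee$ and hence $\sum_{s\in A}|e_s^*(x_l)|>\rho'-\ee$. On the other hand, $A\subseteq\{t:|t|\geq m\}$, so $e_s^*(x_k)=e_s^*(v_k)$ for every $s\in A$; since $A$ is an antichain, the definition of the $S^1$-norm gives $\sum_{s\in A}|e_s^*(x_k)|=\sum_{s\in A}|e_s^*(v_k)|\leq\|v_k\|_{S^1}<\ee$. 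The triangle inequality then delivers
\[\sum_{s\in A}|e_s^*(x_l-x_k)|\ \geq\ \sum_{s\in A}|e_s^*(x_l)|\ -\ \sum_{s\in A}|e_s^*(x_k)|\ >\ \rho'-2\ee\ =\ \rho,\]
as required, and $A\subseteq\bigcup_{s\in B}T_s^m\subseteq\bigcup_{s\in B}T_s$. The only delicate point is the order of quantifiers: $m$ must be fixed in terms of $k$ \emph{before} extracting $A$, so that the bound on $\|v_k\|$ is already in hand when the antichain is chosen; once $A$ is finite, the weak$^*$ approximation that produces $l$ is routine.
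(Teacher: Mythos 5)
Your proof is correct and follows essentially the same route as the paper's: choose $m$ so that the tail of $x_k$ beyond level $m$ has norm less than $\ee$, extract a finite antichain $A\subseteq\bigcup_{s\in B}T_s^m$ carrying nearly all of $\mu_{x^{**}}(\bigcup_{s\in B}V_s)$, use weak$^*$ convergence on the finitely many coordinates in $A$ to find $l$, and finish with the triangle inequality. The only cosmetic difference is your bookkeeping via $\rho'$ and $\ee=(\rho'-\rho)/2$ versus the paper's single $\ee$ with $3\ee<\mu_{x^{**}}(\bigcup_{s\in B}V_s)-\rho$.
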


\begin{proof}
Let $k\in\nn$ and $0<3\ee<\mu_{x^{**}}(\cup_{s\in B}V_s)-\rho$.
Since $x_k\in S^1$, there is some $m\in\nn$ such that
\begin{equation}\label{lmes1}
\|x_k|T_\varnothing^m\|<\ee.
\end{equation}
Choose a finite antichain $A$ of $2^{<\nn}$ with $A\subseteq\cup_{s\in B}T_s^m$
such that
\begin{equation}\label{lmes2}
\mu_{x^{**}}(\cup_{s\in B} V_s)-\ee<\sum_{s\in A}|x^{**}(e_s^*)|
\end{equation}
Moreover since
$(x_n)_n$ converges ${weak}^*$ to $x^{**}$, there is $l>k$ such that
\begin{equation}\label{lmes3}
\bigg|\sum_{s\in A}|x^{**}(e_s^*)|-\sum_{s\in A}|e_s^*(x_n)|\bigg|<\ee.
\end{equation}
Then by (\ref{lmes1}), (\ref{lmes2}) and (\ref{lmes3}) we have that
\[\begin{split}
\sum_{s\in A}|e_s^*(x_l-x_k)|&\geq
\sum_{s\in A}|e_s^*(x_l)|-\sum_{s\in A}|e_s^*(x_k)|>
\sum_{s\in A}|x^{**}(e_s^*)|-2\ee\\&>
\mu_{x^{**}}(\cup_{s\in B} V_s)-3\ee>\rho.
\end{split}\]
\end{proof}

In the sequel, if $Y$ is a subspace of the Banach space $X$,
then we identify $Y^{**}$ with the subspace $Y^{\perp\perp}$ in $X^{**}$.

\begin{lem}\label{fact}
Let $X$ be a Banach space with a $1$-unconditional normalized basis $(e_i)_i$,
$Y$ a closed subspace of $X$ and $T: X \to S^1$ be a
bounded  linear operator. We suppose that there exist $y^{**}\in \mathcal{B}_1(Y)$ and
an antichain $B$ of $2^{<\nn}$ such that \[\mu_{T^{**}(y^{**})}(\cup_{s\in B} V_s)>\rho>0.\]
Then for every $m\in\nn$ and $\ee>0$, there exist $y\in Y$, a finite subset $F$ of  $\nn$, scalars $\{d_i\}_{i\in F}$,
with $m<\min F$, $0\leq d_i\leq 1$ for every $i\in F$ and a finite antichain $A$ of $2^{<\nn}$ with $A\subseteq\bigcup_{s\in B}T_s$
such that \[\bigg\|y-\sum_{i\in F}d_iy^{**}(e_i^*)e_i\bigg\|<\ee\,\, \text{and}\,\, \sum_{s\in A}|e_s^*(T(y))|>\rho.\]
\end{lem}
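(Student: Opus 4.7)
The plan is to construct $y$ via a Bessaga--Pelczynski--style extraction applied to a weak*-approximating sequence for $y^{**}$, combined with Lemma~\ref{lmes} applied to the pushed-forward sequence in $S^1$ to supply the antichain.

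First I would take $(y_n) \subset Y$ with $y_n \xrightarrow{w^*} y^{**}$, so that $(T(y_n)) \xrightarrow{w^*} T^{**}(y^{**})$ in $S^1$ by weak*--weak* continuity of $T^{**}$; in particular $T^{**}(y^{**}) \in \mathcal{B}_1(S^1)$, so the measure hypothesis is well-posed.  Fix $\delta > 0$ (to be chosen as $\ee/3$).  Using the pointwise convergence $e_i^*(y_n) \to y^{**}(e_i^*)$ for each $i$ together with the fact that each $y_n \in X$ has vanishing basis tail, I would extract a subsequence $(y_{n_k})_{k \geq 1}$, choose $M > m$, and pick integers $M = q_0 < q_1 < q_2 < \cdots$ so that $\|P_{>M}(y_{n_1})\| < \delta$ and, setting $w_k := P_{(q_{k-1}, q_k]}(y_{n_{k+1}} - y_{n_k})$, one has $\|(y_{n_{k+1}} - y_{n_k}) - w_k\| < \delta \cdot 2^{-k-2}$ for every $k \geq 1$.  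The $w_k$'s then form a block basis with pairwise-disjoint supports inside $(M, \infty)$.

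Next I would apply Lemma~\ref{lmes} to $(T(y_{n_k}))_k$ and the antichain $B$ with the lemma's index ``$k$'' equal to $1$: this yields some $l > 1$ and a finite antichain $A \subseteq \bigcup_{s \in B} T_s$ with $\sum_{s \in A} |e_s^*(T(y_{n_l} - y_{n_1}))| > \rho$.  Set $y := y_{n_l} - y_{n_1} \in Y$, $F := (M, q_{l-1}] \cap \nn$, and $d_i := 1$ for $i \in F$.  Then $\min F = M + 1 > m$, each $d_i \in [0,1]$, and the antichain condition holds with this $A$.  For the approximation, let $W := \sum_{k=1}^{l-1} w_k$.  Telescoping and the perturbation bounds give $\|y - W\| < \delta/2$.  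Substituting the weak*-limit identity $y^{**}(e_i^*) = e_i^*(y_{n_1}) + \sum_{k \geq 1} e_i^*(y_{n_{k+1}} - y_{n_k})$ and using the disjoint supports of $(w_k)$ gives the identity
\[\sum_{i \in F} y^{**}(e_i^*)\, e_i \;-\; W \;=\; P_F\!\left(y_{n_1} + \sum_{k \geq 1}\bigl((y_{n_{k+1}} - y_{n_k}) - w_k\bigr)\right),\]
whose norm, by 1-unconditionality, is at most $\|P_{>M}(y_{n_1})\| + \sum_k \|(y_{n_{k+1}} - y_{n_k}) - w_k\| < 2\delta$.  Hence $\|y - \sum_F d_i y^{**}(e_i^*) e_i\| < 3\delta = \ee$.

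The main obstacle is the block-basis extraction itself: one must pick $y_{n_1}$ deep enough in the original sequence so that its coordinates agree with those of $y^{**}$ up to index $M$ within $\delta$ (needed to make $\|P_{\leq M}(y_{n_{k+1}} - y_{n_k})\|$ small for all $k \geq 1$, including $k = 1$), while simultaneously choosing $M$ so that $\|P_{>M}(y_{n_1})\| < \delta$.  This tension is resolved by an interleaved inductive procedure that alternates between pushing $n_1$ forward to get pointwise agreement on longer initial segments and enlarging $M$ to control the tail of the current candidate $y_{n_1}$; the subsequent $n_k$ are then chosen successively, exploiting that $e_i^*(y_{n_{k+1}} - y_{n_k}) \to 0$ for each fixed $i$.
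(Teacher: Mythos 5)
There is a genuine gap, and it sits precisely at the point you flag as a ``tension'': the interleaved inductive procedure you propose cannot resolve it, because the required pair $(n_1,M)$ need not exist. Your extraction needs, already for $k=1$, that $\|P_{\le M}(y_{n_2}-y_{n_1})\|$ be small; letting $n_2\to\infty$ this forces $P_{\le M}(y_{n_1})$ to be within $\delta$ of $\sum_{i\le M}y^{**}(e_i^*)e_i$, and combined with $\|P_{>M}(y_{n_1})\|<\delta$ it says that the weakly null sequence $z_j=y_j-\sum_{i\le j}y^{**}(e_i^*)e_i$ has a term of norm at most $2\delta$. A weakly null sequence need not have any norm-small terms, and the obstruction is concrete: in $c_0$ take $y^{**}$ to be the indicator of the odd coordinates and $y_n=\sum_{i=1}^n e_{2i-1}+e_{2n}$, which converges weak$^*$ to $y^{**}$. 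For any $n<n'$ the even part of $y_{n'}-y_n$ is $e_{2n'}-e_{2n}$, of norm $1$, whereas every vector of the form $\sum_{i\in F}d_iy^{**}(e_i^*)e_i$ has zero even part; since the coordinate projection onto the even indices is a contraction, no difference of two terms of this sequence is within $\ee<1$ of any admissible approximant, for any choice of $F$ and $d_i$. So taking $y=y_{n_l}-y_{n_1}$ with $d_i\equiv 1$ cannot work, and no reordering of the choices of $n_1$ and $M$ repairs it.

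The missing ingredient is Mazur's theorem. The paper's proof observes that $(y_j-\sum_{i=1}^j y^{**}(e_i^*)e_i)_j$ is weakly null and extracts \emph{convex blocks} $\sum_{j\in F_n}r_jy_j$ whose distance to $\sum_{j\in F_n}r_j\sum_{i\le j}y^{**}(e_i^*)e_i$ tends to $0$ in norm; this is exactly what kills the noise (in the example above the even part of a convex block has norm $\max_j r_j$). The vector $y$ is then the difference of two such convex blocks, and Lemma~\ref{lmes} is applied to the weak$^*$-convergent sequence of convex blocks of $(T(y_j))_j$ rather than to a subsequence of $(T(y_n))_n$ --- that part of your argument is otherwise fine and transfers verbatim. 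Note finally that the scalars $d_i\in[0,1]$ in the statement are not a normalization you are free to set equal to $1$: they arise as differences of tail sums of the convex coefficients, $d_i=\sum_{j\in F_{n_2},\,j\ge i}r_j-\sum_{j\in F_{n_1},\,j\ge i}r_j$, and this is the only reason the approximant has the prescribed form. Your identification of Lemma~\ref{lmes} as the source of the antichain and of a weak$^*$-approximating sequence as the starting point is correct; the convexification step is what is missing, and it is essential rather than technical.
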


\begin{proof}
We fix $m\in\nn$ and $\ee>0$. Let $(y_n)_n$ be a sequence in $Y$ which is ${weak}^*$ convergent to $y^{**}$.
By Proposition \ref{pb1}, the sequence $(\sum_{i=1}^jy^{**}(e_i^*)e_i)_j$ ${weak}^*$ convergent to $y^{**}$.
Hence the sequence  $(y_j-\sum_{i=1}^jy^{**}(e_i^*)e_i)_j$ is weakly null. By Mazur's theorem there is
a convex block sequence $(\sum_{j\in F_n}r_j(y_j-\sum_{i=1}^jy^{**}(e_i^*)e_i))_n$, where $(F_n)_n$ is a
sequence of successive finite subsets of $\nn$ (i.e., $\max F_n<\min F_{n+1}$ for all $n\in\nn$),
$\sum_{j\in F_n}r_j=1$ and $r_j\geq 0$, for all $j\in F_n$, such that
\[\lim_n\bigg\|\sum_{j\in F_n}r_j(y_j-\sum_{i=1}^jy^{**}(e_i^*)e_i)\bigg\|=0.\]
We choose $n_1\in\nn$, with $m<\min F_{n_1}$ and
\begin{equation}\label{fact1a}
\bigg\|\sum_{j\in F_n}r_jy_j-\sum_{j\in F_n}r_j\sum_{i=1}^jy^{**}(e_i^*)e_i\bigg\|<\ee/2,\,\, \text{for all}\,\, n\geq n_1.
\end{equation}
Since the sequence $(\sum_{j\in F_n}r_jy_j)_n$ is weak star convergent to $y^{**}$, $T^{**}$ is ${weak}^*$-${weak}^*$ continuous
and $T^{**}|X=T$, we have that the sequence $(T(\sum_{j\in F_n}r_jy_j))_n$ weak star convergent to $T^{**}(y^{**})$.
Applying  Lemma \ref{lmes} we obtain that there exist a finite antichain $A$ of $2^{<\nn}$ with $A\subseteq\bigcup_{s\in B}T^0_s$
and $n_2>n_1$, such that
\begin{equation}\label{fact2a}
\sum_{s\in A}\bigg|e_s^*\bigg(T\bigg(\sum_{j\in F_{n_2}}r_jy_j-\sum_{j\in F_{n_1}}r_jy_j\bigg)\bigg)\bigg|>\rho.
\end{equation}
We set $y_1=\sum_{j\in F_{n_1}}r_jy_j$, $y_2=\sum_{j\in F_{n_2}}r_jy_j$ and $y=y_2-y_1$. Note that
\[\sum_{j\in F_{n_2}}r_j\sum_{i=1}^jy^{**}(e_i^*)e_i-\sum_{j\in F_{n_1}}r_j
\sum_{i=1}^jy^{**}(e_i^*)e_i=\sum_{i\in F}d_iy^{**}(e_i^*)e_i,\]
where $F$ is a finite subset of $\nn$, $m<\min F$ and $0\leq d_i\leq 1$, for every $i\in F$. Therefore by
\eqref{fact1a} and \eqref{fact2a} the conclusion of the lemma follows.
\end{proof}

We remind that $\mathcal{M}^+(2^\nn)$ denotes the positive cone of $\mathcal{M}(2^\nn)$.

\begin{lem} \label{fact1}Let $\{\mu_\xi\}_{\xi<\omega_1}$ be a non-separable  subset of $\mathcal{M}^+(2^\nn)$.
Then there is an uncountable subset $\Gamma$  of $\omega_1$ such
that for every $\xi\in \Gamma$,
 $\mu_\xi=\lambda_\xi+\tau_\xi$  where $\lambda_\xi,\tau_\xi$ are positive Borel measures on $2^\nn$
  satisfying the following properties.
\begin{enumerate}
\item For all $\xi\in \Gamma$,  $\lambda_\xi\perp\tau_\xi$ and
$\|\tau_\xi\|>0$. \item For all $\zeta<\xi$ in $\Gamma$,
$\mu_\zeta\perp \tau_\xi$.
\end{enumerate}
In particular, for all $\zeta<\xi$ in $\Gamma$,
$\tau_\zeta\perp \tau_\xi$.
\end{lem}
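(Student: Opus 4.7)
The plan is to construct $\Gamma$ by transfinite recursion on $\omega_1$, adding $\xi$ exactly when $\mu_\xi$ has a non-trivial singular part with respect to the measures chosen at earlier stages. Specifically, assume at stage $\xi$ that $\Gamma_\xi := \Gamma \cap \xi$ has been defined; it is countable. Fix an enumeration $\Gamma_\xi = \{\zeta_n\}_{n\in\nn}$ and put
\[
\nu_\xi := \sum_n 2^{-n}(1+\|\mu_{\zeta_n}\|)^{-1}\mu_{\zeta_n},
\]
a finite positive Borel measure on $2^\nn$ satisfying $\mu_\zeta \ll \nu_\xi$ for every $\zeta \in \Gamma_\xi$ (take $\nu_\xi = 0$ if $\Gamma_\xi = \varnothing$). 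Let $\mu_\xi = \lambda_\xi + \tau_\xi$ be the corresponding Lebesgue decomposition, so $\lambda_\xi \ll \nu_\xi$ and $\tau_\xi \perp \nu_\xi$, and declare $\xi \in \Gamma$ precisely when $\|\tau_\xi\| > 0$.

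The main step is to show that $\Gamma$ is uncountable; this is the only place where the non-separability hypothesis is used. Suppose, for contradiction, that $\Gamma$ is countable. By regularity of $\omega_1$ there is $\alpha < \omega_1$ with $\Gamma \subseteq \alpha$, so for every $\xi \geq \alpha$ one has $\Gamma_\xi = \Gamma$, and consequently absolute continuity with respect to $\nu_\xi$ coincides with absolute continuity with respect to the fixed measure $\nu_\alpha$. Since $\xi \notin \Gamma$, this forces $\tau_\xi = 0$, i.e.\ $\mu_\xi \ll \nu_\alpha$. Since $L^1(\nu_\alpha)$ is separable and the map $f \mapsto f\cdot\nu_\alpha$ is $1$-Lipschitz from $L^1(\nu_\alpha)$ into $\mathcal{M}(2^\nn)$, the image set $\{\mu_\xi : \xi \geq \alpha\}$ is separable in $\mathcal{M}(2^\nn)$; combined with the countable remainder $\{\mu_\xi : \xi < \alpha\}$, this makes the whole family $\{\mu_\xi\}_{\xi<\omega_1}$ separable, a contradiction.

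The two stated properties now follow directly from the Lebesgue decomposition. For (1), $\|\tau_\xi\| > 0$ is the inclusion criterion, while $\lambda_\xi \perp \tau_\xi$ holds because any Borel $N$ with $\nu_\xi(N) = 0 = \tau_\xi(2^\nn \setminus N)$ also satisfies $\lambda_\xi(N) = 0$ by $\lambda_\xi \ll \nu_\xi$. For (2), when $\zeta < \xi$ both lie in $\Gamma$ we have $\zeta \in \Gamma_\xi$, hence $\mu_\zeta \ll \nu_\xi$; combined with $\tau_\xi \perp \nu_\xi$ this forces $\mu_\zeta \perp \tau_\xi$. The ``in particular'' clause follows because $\tau_\zeta \leq \mu_\zeta$, so any Borel witness of $\mu_\zeta \perp \tau_\xi$ also witnesses $\tau_\zeta \perp \tau_\xi$. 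The only real obstacle is the second paragraph: one must recognize that non-separability is exactly the leverage needed to keep the recursion alive through uncountably many stages, and the separability of $L^1$ of any finite Borel measure on the Polish space $2^\nn$ is the cleanest way to extract it.
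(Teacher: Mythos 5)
Your proof is correct, and it runs on the same engine as the paper's: a transfinite recursion in which each new measure is Lebesgue-decomposed against a normalized countable sum $\nu$ of the previously selected ones, with separability of $L^1(\nu)$ (via Radon--Nikodym) supplying the leverage from non-separability. The organizational difference is where that leverage is applied. The paper first replaces the family by an uncountable $\delta$-separated subfamily, and then at \emph{each} inductive step $\alpha$ argues that the set $N_\alpha$ of indices $\xi$ with $\mu_\xi\ll\nu_\alpha$ is countable (a $\delta$-separated subset of the separable space $L^1(\nu_\alpha)$ must be countable), so that a fresh $\xi_\alpha\notin N_\alpha$ can always be chosen. You instead define $\Gamma$ greedily as the set of \emph{all} $\xi$ whose measure has a nonzero singular part relative to $\Gamma\cap\xi$, and invoke separability of $L^1$ only \emph{once}, at the end: if $\Gamma$ were countable and contained in some $\alpha$, every $\mu_\xi$ with $\xi\ge\alpha$ would be absolutely continuous with respect to the single measure $\nu_\alpha$, forcing the whole family into a separable subset of $\mathcal{M}(2^\nn)$. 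This buys you a slightly cleaner argument --- no $\delta$-separation preprocessing and no per-step counting --- at the cost of having to observe that for $\xi\ge\alpha$ the measures $\nu_\xi$ and $\nu_\alpha$ have the same null sets, which you do. Both arguments deliver exactly the stated conclusion, including the verification of (1), (2) and the ``in particular'' clause, which you handle the same way the paper does.
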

\begin{proof} We may suppose that for some $\delta>0$,
 $\|\mu_\xi-\mu_\zeta\|>\delta$, for all $0\leq \zeta<\xi<\omega_1$. By transfinite induction we
 construct a strictly increasing sequence $(\xi_\alpha)_{\alpha<\omega_1}$ in $\omega_1$ such that
 for each $\alpha<\omega_1$, $\mu_{\xi_\alpha}=\lambda_{\xi_\alpha}+\tau_{\xi_\alpha}$, with
  $\lambda_{\xi_\alpha}\perp \tau_{\xi_\alpha}$, $\|\tau_{\xi_\alpha}\|>0$ and  $\tau_{\xi_\alpha}\perp \mu_{\xi_\beta}$ for all $\beta<\alpha$.
  The general inductive step of the construction is as follows. Suppose that for some $\alpha<\omega_1$,
$(\xi_\beta)_{\beta<\alpha}$ has been defined. Let $(\beta_n)_n$ be an enumeration of $\alpha$ and set
 \[\zeta_\alpha=\sup_n\xi_{\beta_n},\;\;
 \nu_\alpha=\sum_n\mu_{\xi_{\beta_n}}/2^n\;\text{and}\;N_\alpha=\{\xi<\omega_1:\zeta_\alpha<\xi\;\text{and}\;\mu_\xi<<\nu_\alpha\}\]
By the Radon - Nikodym theorem, $\{\mu_\xi\}_{\xi\in N_\alpha}$ is
isometrically contained in $L_1(2^\nn,\nu_\alpha)$ and therefore
it is norm separable. Since we have assumed that
$\|\mu_\xi-\mu_\zeta\|>\delta$, for all $0\leq
\zeta<\xi<\omega_1$, we get that $N_\alpha$ is countable. Hence we
can choose $\xi_\alpha>\sup N_\alpha$. Let
$\mu_{\xi_\alpha}=\lambda_{\xi_\alpha}+\tau_{\xi_\alpha}$ be the
Lebesgue analysis of $\mu_{\xi_\alpha}$ where
$\lambda_{\xi_\alpha}<<\nu_{\alpha} $ and $\tau_{\xi_\alpha}\perp
\nu_{\alpha}$. By the definition of $\nu_{\alpha}$
 and $\xi_\alpha$, we have that $\|\tau_{\xi_\alpha}\|>0$, $\tau_{\xi_\alpha}\perp\mu_{\xi_\beta}$,
 for all $\beta<\alpha$ and the inductive step
 of the construction has
been  completed.\end{proof}

\begin{lem} \label{fact2} Let $\{\tau_\xi\}_{\xi<\omega_1}$ be an uncountable family
of pairwise singular positive Borel measures on $2^\nn$.
 Then for  every finite family  $(\Gamma_i)_{i=1}^k$ of  pairwise disjoint uncountable subsets of $\omega_1$
and every $\ee>0$  there exist a family  $(\Gamma'_i)_{i=1}^k$  with  $\Gamma'_i$ an  uncountable subset
  of $\Gamma_i$  and  a family $(O_i)_{i=1}^k$ of open and pairwise disjoint subsets of  $2^\nn$ such that
$\tau_\xi(2^\nn\setminus O_i)<\ee,$
   for all $1\leq i\leq k$ and  $\xi\in \Gamma_i'$.
\end{lem}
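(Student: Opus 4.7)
My strategy is to approximate each $\tau_\xi$ by a clopen subset of $2^\nn$ via outer regularity, then pigeonhole on the countable clopen algebra of $2^\nn$ to extract, within each $\Gamma_i$, an uncountable subfamily sharing a common approximating clopen $D_i$, and finally to disjointify the $k$ clopens $D_1,\ldots,D_k$ using the pairwise singularity hypothesis.

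Fix $\delta \in (0, \ee/(k+1))$. For each $\xi \in \bigcup_{i=1}^{k} \Gamma_i$, outer regularity of $\tau_\xi$ on the compact metric space $2^\nn$ combined with the countable clopen base $\{V_s : s \in 2^{<\nn}\}$ produces a clopen set $C_\xi \subseteq 2^\nn$ with $\tau_\xi(2^\nn \setminus C_\xi) < \delta$ (choose a compact $K$ with small $\tau_\xi$-complement and cover it by finitely many basic clopens). Since the clopen algebra of $2^\nn$ is countable, pigeonhole on each $\Gamma_i$ yields an uncountable $\tilde{\Gamma}_i \subseteq \Gamma_i$ and a clopen $D_i$ with $C_\xi = D_i$ for every $\xi \in \tilde{\Gamma}_i$. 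The natural candidates are the pairwise disjoint clopen sets $O_i = D_i \setminus \bigcup_{j \neq i} D_j$; for $\xi \in \tilde{\Gamma}_i$ one has $\tau_\xi(2^\nn \setminus O_i) \leq \delta + \sum_{j \neq i} \tau_\xi(D_i \cap D_j)$, so the remaining task is to further refine each $\tilde{\Gamma}_i$ so that every overlap mass $\tau_\xi(D_i \cap D_j)$ becomes small. In the benign case --- when for every pair $(i,j)$ the exceptional set $E_{ij} = \{\xi \in \tilde{\Gamma}_i : \tau_\xi(D_i \cap D_j) \geq \delta\}$ is countable --- it suffices to take $\Gamma_i' = \tilde{\Gamma}_i \setminus \bigcup_{j \neq i} E_{ij}$, which remains uncountable and yields $\tau_\xi(2^\nn \setminus O_i) < \delta + (k-1)\delta < \ee$.

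The main obstacle is the opposite case, when some $E_{ij}$ is uncountable. Then $\{\tau_\xi|_{D_i \cap D_j}\}_{\xi \in E_{ij}}$ is an uncountable family of pairwise singular positive Borel measures on the clopen subspace $D_i \cap D_j$, each of mass at least $\delta$, together with an analogous uncountable family coming from $\tilde{\Gamma}_j$. I would iterate the construction from the previous paragraph inside $D_i \cap D_j$ (itself a compact totally disconnected Polish space with a countable clopen base) to produce pairwise disjoint sub-clopens that separately support the two families up to $\delta$. Infinite regress is ruled out by the observation that two distinct pairwise singular positive Borel measures cannot simultaneously carry a positive atom at the same point of $2^\nn$: any descending chain of basic clopens $W_0 \supseteq W_1 \supseteq \ldots$ on which uncountably many pairwise singular measures retain mass $\geq \delta$ would, by downward continuity of measure together with $\bigcap_n W_n$ being a single point, force uncountably many of those measures to share a common positive atom, a contradiction. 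Hence the iteration terminates after finitely many steps, and once the bookkeeping across the $\binom{k}{2}$ pairs is carried out the refined sub-clopens combine with the original $O_i$'s to yield the required pairwise disjoint open sets together with the uncountable subfamilies $\Gamma_i'$.
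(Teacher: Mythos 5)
There is a genuine gap, and it originates in the very first step. Outer regularity alone gives you no usable information: the set $C_\xi=2^\nn$ always satisfies $\tau_\xi(2^\nn\setminus C_\xi)<\delta$, so nothing prevents the pigeonholed clopens $D_1,\ldots,D_k$ from all being equal (or heavily overlapping), and the entire burden of the proof is shifted onto the disjointification stage. That stage is then not carried out: when some $E_{ij}$ is uncountable you are facing exactly the original problem (the case $k=2$) on the smaller space $D_i\cap D_j$, with no quantity that decreases, so the proposed iteration is circular rather than a recursion on a well-founded parameter. The termination argument does not rescue it: the clopens produced by the iteration are not a descending chain of \emph{basic} clopens, a descending chain of general clopens in $2^\nn$ need not have singleton intersection, and nothing in the construction forces uncountably many measures to retain mass $\geq\delta$ along such a chain. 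The observation that pairwise singular measures cannot share an atom is true but is never actually brought to bear on the configuration the iteration produces. Your ``benign case'' is handled correctly, but the lemma is only as strong as its worst case.

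The fix is to invoke pairwise singularity \emph{before} pigeonholing rather than after. For each $\alpha<\omega_1$ choose a transversal $(\xi_1^\alpha,\ldots,\xi_k^\alpha)\in\prod_{i=1}^k\Gamma_i$ with all coordinates distinct across different $\alpha$; the finitely many measures $\tau_{\xi_1^\alpha},\ldots,\tau_{\xi_k^\alpha}$ are pairwise singular, so by inner regularity and zero-dimensionality of $2^\nn$ one finds \emph{pairwise disjoint} clopens $O_1^\alpha,\ldots,O_k^\alpha$ with $\tau_{\xi_i^\alpha}(2^\nn\setminus O_i^\alpha)<\ee$. Since there are only countably many $k$-tuples of clopens, uncountably many $\alpha$ share the same tuple $(O_1,\ldots,O_k)$, and setting $\Gamma_i'=\{\xi_i^\alpha:\alpha\in\Gamma\}$ finishes the proof in one pigeonhole with no iteration. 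This is the route the paper takes; the countability of the clopen algebra, which you correctly identified as the engine of the argument, is applied to $k$-tuples of already-disjoint sets rather than to single unconstrained approximants.
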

\begin{proof} For every $\alpha<\omega_1$, we choose $(\xi_i^\alpha)_{i=1}^k\in \prod_{i=1}^k{\Gamma_i}$
 such that for every $\alpha\neq \beta$ in $\omega_1$ and every $1\leq i\leq k$, $\xi_i^\alpha\neq \xi_i^\beta$.
For each $0\leq \alpha<\omega_1$  the k-tuple
$(\tau_{\xi^i_\alpha})_{i=1}^k$ consists of pairwise singular
measures and so we may choose a k-tuple $(U^\alpha_i)_{i=1}^k$ of
clopen subsets of $2^\nn$ with the following properties: (a) For
each $i$, $\tau_{\xi^\alpha_i}(2^\nn\setminus O^\alpha_i)<\ee$ and
(b) For all $i\neq j$, $O_i^\alpha\cap O_j^\alpha=\emptyset$.

Since the family of all clopen subsets of $2^\nn$ is countable,
there is a $k$-tuple $(O_i)_i$ and an uncountable subset $\Gamma$ of $\omega_1$,
  such that for all $1\leq i\leq k$ and all $\alpha\in \Gamma$,
 $U_i^\alpha=O_i$. For each $1\leq i\leq k$, set $\Gamma_i'=\{\xi_i^\alpha:\;\alpha\in \Gamma\}$.  Then
 for each $1\leq i\leq k$ and all  $\xi\in \Gamma_i'$, $\tau_{\xi}(2^\nn\setminus O_i)<\ee$.
\end{proof}

The following lemma is the main tool used to prove Theorem \ref{t1}.

\begin{lem}\label{lemind}  Let $X$ be a Banach space with a $1$-unconditional basis,
$Y$ a closed subspace of $X$ and $T: X \to S^1$ be a
bounded  linear operator. We suppose that $\mathcal{B}_1(Y)$ contains an uncountable family
$\mathcal{G}$ such that $\mathcal{M}_{\mathcal{G}}=\{\mu_{T^{**}(x^{**})}: x^{**}\in\mathcal{G}\}$ is
non-separable. Then there is a constant $\rho >0$, such
that for every $\ee>0$ there exist a family $(y_s)_s$ of elements of $Y$
and a family $(A_s)_s$ of finite antichains of $2^{<\nn}$ such that the following are satisfied.
\begin{enumerate}
\item[(C1)] For every $s\perp t$, $A_s\perp A_t$.
\item[(C2)] For every $s\in 2^{<\nn}$, $\sum_{t\in A_s}|e_t^*(T(y_s))|>\rho$.
\item[(C3)] For every $\sigma\in 2^\nn$ and sequence $(a_k)_{k=0}^\infty$ of scalars,
\[\bigg\|\sum_{k=0}^n a_k y_{\sigma|k}\bigg\|\leq (1+\ee)\max_{0\leq k\leq n}|a_k|, \,\,\text{for all}\,\, n\geq 0.\]
\end{enumerate}
\end{lem}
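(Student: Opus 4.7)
The plan is to build the required families by recursion along the natural enumeration $(s_n)_{n\geq 0}$ of $2^{<\nn}$. At each node $s$ I will associate, in addition to $y_s\in Y$ and $A_s$, a reference functional $y^{**}_s\in\mathcal{G}$, a clopen ``pocket'' $O_s\subseteq 2^\nn$ carrying significant mass of $\mu_{T^{**}(y^{**}_s)}$, and a high-index ``support window'' $F_s\subset\nn$. The pockets will be nested along the tree and pairwise disjoint whenever the nodes are incomparable; since I arrange $A_s\subseteq\{t\in 2^{<\nn}:V_t\subseteq O_s\}$, this forces (C1). The antichain $A_s$, together with its $\ell_1$-estimate (C2), will come directly from Lemma~\ref{fact}. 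The windows $F_s$ will be pairwise disjoint with $\min F_s$ tending to infinity; this is the mechanism that delivers (C3) through $1$-unconditionality of $(e_i)$ in $X$.

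After scaling so that $\|y^{**}\|\leq 1$ for every $y^{**}\in\mathcal{G}$, Lemma~\ref{fact1} yields an uncountable $\Gamma$ whose measures decompose as $\mu_{T^{**}(y^{**}_\xi)}=\lambda_\xi+\tau_\xi$ with pairwise singular ``fresh parts'' $\tau_\xi$; pass to an uncountable $\Gamma_0\subseteq\Gamma$ on which $\|\tau_\xi\|>4\rho$ for a fixed $\rho>0$. Alongside the tree data, maintain nested uncountable labeling pools $\Gamma_s\subseteq\Gamma_0$, subject to the \emph{coherence} constraint that every $\xi\in\Gamma_s$ satisfies $|y^{**}_\xi(e_i^*)-y^{**}_t(e_i^*)|<\ee\,2^{-|s|-3}$ for each $t\sqsubset s$ and $i\in F_t$. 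At step $n$, for $s_n=s_p{}^\smallfrown j$: apply Lemma~\ref{fact2} to $\{\tau_\xi:\xi\in\Gamma_{s_p}\}$ with two classes (one per child), obtaining disjoint clopen sets $O_{s_p{}^\smallfrown 0},O_{s_p{}^\smallfrown 1}\subseteq O_{s_p}$ and uncountable sibling subfamilies $\Gamma^0,\Gamma^1$; pick $y^{**}_{s_n}\in\Gamma^j$, so $\mu_{T^{**}(y^{**}_{s_n})}(O_{s_n})>3\rho$. Apply Lemma~\ref{fact} with $B=\{t:V_t\subseteq O_{s_n}\}$, accuracy $\ee\,2^{-n-3}$, and $m=\max\bigcup_{k<n}F_{s_k}$, producing $y_{s_n}$, $F_{s_n}$ (with $\min F_{s_n}>m$), and $A_{s_n}$ satisfying $\|y_{s_n}-v_{s_n}\|<\ee\,2^{-n-3}$, where $v_s:=\sum_{i\in F_s}d_i^s y^{**}_s(e_i^*)e_i$, together with (C2). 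Finally, refine $\Gamma^j$ by a pigeonhole argument on the finitely many coordinates $(y^{**}_\xi(e_i^*))_{i\in F_{s_n}}\in[-1,1]^{|F_{s_n}|}$, partitioning $[-1,1]^{|F_{s_n}|}$ into finitely many $\ee\,2^{-n-3}$-buckets and retaining an uncountable bucket (replacing $y^{**}_{s_n}$ by a representative of that bucket and re-running Lemma~\ref{fact} if needed), to obtain an uncountable $\Gamma_{s_n}$ consistent with $y^{**}_{s_n}$ on $F_{s_n}$.

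\textbf{Verification of (C3) and main difficulty.} For every branch $\sigma$ and scalars $(a_k)$, the approximation bounds and the triangle inequality give
\[\bigg\|\sum_{k=0}^n a_k y_{\sigma|k}\bigg\|\leq\bigg\|\sum_{k=0}^n a_k v_{\sigma|k}\bigg\|+\tfrac{\ee}{4}\max_k|a_k|.\]
Since the $F_{\sigma|k}$ are pairwise disjoint and $|d^{\sigma|k}_i|\leq 1$, the $1$-unconditionality of $(e_i)$ (equivalently, the induced Banach-lattice structure on $X$) yields
\[\bigg\|\sum_{k=0}^n a_k v_{\sigma|k}\bigg\|\leq\max_k|a_k|\cdot\bigg\|\sum_{k=0}^n\sum_{i\in F_{\sigma|k}}|y^{**}_{\sigma|k}(e_i^*)|e_i\bigg\|.\]
The coherence built into the $\Gamma_s$'s guarantees $|y^{**}_{\sigma|k}(e_i^*)-y^{**}_{\sigma|n}(e_i^*)|<\ee\,2^{-k-3}$ for all $i\in F_{\sigma|k}$ and $k\leq n$, so the right-hand norm is within $\ee/4$ of $\|\sum_{k\leq n}\sum_{i\in F_{\sigma|k}}|y^{**}_{\sigma|n}(e_i^*)|e_i\|$, which by $1$-unconditionality is at most $\|y^{**}_{\sigma|n}\|\leq 1$. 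Combining the estimates yields (C3) with constant $1+\ee$. The main obstacle is the simultaneous management, in each inductive step, of two competing demands on the $\Gamma_s$'s: the Lemma~\ref{fact2}-style separation needed to place siblings into disjoint pockets (driving (C1)) and the pigeonhole-style coordinate coherence needed to bound branch profiles (driving (C3))---all while keeping $\Gamma_s$ uncountable so that the recursion can continue for $\omega$ many stages.
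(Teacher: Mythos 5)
Your overall architecture coincides with the paper's: Lemma \ref{fact1} to extract pairwise singular parts $\tau_\xi$ of uniformly positive mass, Lemma \ref{fact2} to build a Cantor scheme of pockets (giving (C1)), Lemma \ref{fact} to produce the vectors $y_s$, the windows $F_s$ and the antichains $A_s$ (giving (C2)), and disjointly supported windows plus $1$-unconditionality plus coordinate coherence along branches (giving (C3)). The one place where your argument has a genuine gap is the mechanism by which you keep the pools $\Gamma_s$ uncountable while forcing coherence with the node's reference functional on its window. Your pigeonhole produces an uncountable bucket of $\Gamma^j$ with respect to the coordinates in $F_{s_n}$, but that bucket need not contain $y^{**}_{s_n}$; your fix is to replace $y^{**}_{s_n}$ by a bucket representative and ``re-run Lemma \ref{fact} if needed.'' Re-running Lemma \ref{fact} with the new reference produces a \emph{new} window $F'_{s_n}$ (and new $y_{s_n}$, $A_{s_n}$), whereas your bucket was formed with respect to the old window; you would then have to re-bucket on $F'_{s_n}$, possibly replace the reference again, and so on. Nothing guarantees this loop terminates, and a decreasing sequence of uncountable sets can fail to have uncountable intersection, so the circularity is real.

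The paper resolves exactly this point by choosing the reference $\xi_s\in\Gamma_s$ so that $y^{**}_{\xi_s}$ is a $w^*$-condensation point of $\{y^{**}_\xi\}_{\xi\in\Gamma_s}$ (using the identification of $\mathcal{B}_1(X)$ with $[(e_i^*)_i]^*$ from Proposition \ref{dual}, under which bounded sets carry a second countable $w^*$-topology). Then, after Lemma \ref{fact} has produced $F_s$, the set $\{\xi\in\Gamma_s: |y^{**}_\xi(e_i^*)-y^{**}_{\xi_s}(e_i^*)|<\ee_{n+1}/\#F_s \text{ for all } i\in F_s\}$ is a relatively $w^*$-open neighborhood of a condensation point, hence automatically uncountable --- no replacement and no re-run are needed. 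Two smaller points: (a) your coherence tolerance should be of the form $\ee\,2^{-n-3}/\#F_t$ rather than $\ee\,2^{-n-3}$, since in the (C3) estimate you sum the coordinate errors over all $i\in F_t$, and the cardinalities $\#F_t$ are not under control; (b) when you nest the pockets you must intersect $O_{s^\smallfrown j}$ with $O_s$ and track the accumulated loss of $\tau_\xi$-mass along the branch via a summable series of tolerances (as in condition (i) of the paper's construction) in order to keep $\mu_{T^{**}(y^{**}_{s_n})}(O_{s_n})$ bounded below. Both of these are routine repairs; the condensation-point selection is the missing idea.
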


\begin{proof}  Let  $\{e_i\}_{i=1}^\infty$ be an $1$-unconditional normalized basis of $X$.
Since for all $x^{**}\in \mathcal{B}_1(S^1)$ and  $\lambda\in\rr$,
$\mu_{\lambda x^{**}}=|\lambda|\mu_{x^{**}}$, we may assume  that
$\mathcal{G}\subseteq \{y^{**}\in\mathcal{B}_1(Y): \|y^{**}\|=1\}$.
By Lemma \ref{fact1},  there is  a non-separable  subset   $\{\mu_{T^{**}(y^{**}_\xi)}\}_{\xi<\omega_1}$
 of $\mathcal{M}_{\mathcal{G}}$  such that
 for all $0\leq \xi<\omega_1$,
$\mu_{T^{**}(y^{**}_\xi)}=\lambda_\xi+\tau_\xi$, and
for all $\zeta<\xi$, $\tau_{\zeta}\perp \tau_{\xi}$. By passing to
a further uncountable subset and relabeling, we may also assume that there is
$\rho_0>0$ such that $\|\tau_\xi\|>\rho_0$.
 We fix $\ee>0$ and a sequence
$(\ee_n)_n$ of positive real numbers with $\sum_{n=0}^\infty\ee_n<\ee/2$. We will construct the
following objects:
\begin{enumerate}
\item[(1)] a Cantor scheme $(\Gamma_s)_{s}$ of uncountable subsets
of $\omega_1$  (that is for all $s\in 2^{<\nn}$,
$\Gamma_{s^\smallfrown 0}\cup\Gamma_{s^\smallfrown 1}\subseteq
\Gamma_{s}$ and  $\Gamma_{s^\smallfrown
0}\cap\Gamma_{s^\smallfrown 1}=\emptyset$),
\item[(2)] a family
$(\xi_s)_s$ with $\xi_s\in\Gamma_s$, for all $s\in 2^{<\nn}$,
\item [(3)] A Cantor scheme
of open subsets $(U_s)_s$ of $2^\nn$, $U_s=\bigcup_{t\in B_s}V_t$, where $B_s$ is an
antichain of $2^{<\nn}$, for all $s\in 2^{<\nn}$,
\item[(4)] a family $(y_s)_s$ in $Y$,
\item[(5)] a family $(F_s)_s$ of finite subsets of $\nn$,
a sequence $(d_i)_{i\in F_s}$ of scalars, for all $s\in 2^{<\nn}$, and
\item[(6)] a family $(A_s)_s$ of finite antichains of $2^{<\nn}$,
\end{enumerate}
such that the following are satisfied.
\begin{enumerate}
\item [(i)] For every $\xi\in\Gamma_s$, $\tau_\xi(U_s)>\rho_0/2$
and $\tau_\xi(2^\nn\setminus
U_s)<\big(\sum_{i=0}^{|s|}2^{-(i+2)}\big)\rho_0$.
\item[(ii)] The element $y^{**}_{\xi_s}$ is
$w^*$-condensation point of $\{y^{**}_\xi\}_{\xi\in \Gamma_s}$
(by Proposition \ref{dual}, we identify the space $\mathcal{B}_1(X)$ with the space $[(e_i^*)_i]^*$,
endowed with the weak star topology).
\item [(iii)] For every $n\geq 1$, $s\in 2^n$, $\xi\in \Gamma_s$ and $i\in F_{s^-}$,
\[|y^{**}_\xi(e_i^*)-y^{**}_{\xi_{s^-}}(e_i^*)|<\frac{\ee_n}{\#F_{s^-}},\;\text{where}\;\; s^-=(s(1),...,s(n-1))\]
\item[(iv)] For every $i\in F_s$, $0\leq d_i\leq 1$ and for every
$\sigma\in 2^\nn$ and $n\geq 0$, $\max F_{\sigma|n}<\min F_{\sigma|n+1}$.
\item [(v)] $A_s\subseteq\bigcup_{t\in B_s}T_t$ and $\sum_{t\in A_s}|e_t^*(T(y_s))|>\rho$.
 \item[(vi)] For every $s\in 2^n$,
$\big\|y_s-\sum_{i\in F_s}d_iy^{**}_{\xi_s}(e_i^*)e_i\big\|<\ee_n$.
 \end{enumerate}
Given the above construction,   we set $\rho=\rho_0/2$ and we claim that
the families $(y_s)_s$ and $(A_s)_s$, satisfy conditions (C1), (C2) and (C3). Observe that (C1) follows from the fact that $(U_s)_s$ is a Cantor scheme and the first part of (v), while (C2) from the second part of (v). It remains verify condition (C3). So let $n\geq 1$ and $\sigma\in 2^\nn$.
Then $\Gamma_{\sigma|k+1}\subseteq
\Gamma_{\sigma|k}$, for every $k\geq 0$,
and so by (iii) we
get that

\begin{equation}\label{lemind1}
\begin{split}
&\bigg\|\sum_{k=0}^n \sum_{i\in F_{\sigma|k}}y^{**}_{\xi_{\sigma|k}}(e_i^*)e_i\bigg\|\\&=
\bigg\|\sum_{k=0}^n \sum_{i\in F_{\sigma|k}}y^{**}_{\xi_{\sigma|k}}(e_i^*)e_i-
\sum_{k=0}^{n-1} \sum_{i\in F_{\sigma|k}}y^{**}_{\xi_{\sigma|k}}(e_i^*)e_i+\sum_{k=0}^{n-1} \sum_{i\in F_{\sigma|k}}y^{**}_{\xi_{\sigma|k}}(e_i^*)e_i\bigg\|\\&\leq
\sum_{k=0}^{n-1} \sum_{i\in F_{\sigma|k}}|y^{**}_{\xi_{\sigma|k}}(e_i^*)-y^{**}_{\xi_{\sigma|n}}(e_i^*)|
+\bigg\|\sum_{k=0}^n \sum_{i\in F_{\sigma|k}}y^{**}_{\xi_{\sigma|n}}(e_i^*)e_i\bigg\|
\\&<\sum_{k=0}^{n-1}\ee_{k+1}+\|y^{**}_{\xi_{\sigma|n}}\|<\frac{\ee}{2}+1.
\end{split}
\end{equation}
As $\{e_i\}_i$ is a $1$-unconditional normalized basis of $X$,
by (iv) we have that if  $(a_k)_{k=0}^\infty$ is a sequence of scalars, then

\begin{equation}\label{lemind2}
\begin{split}
\bigg\|\sum_{k=0}^n a_k\sum_{i\in F_{\sigma|k}}d_iy^{**}_{\xi_{\sigma|k}}(e_i^*)e_i\bigg\|&\leq
\max_{0\leq k\leq n}|a_k|\bigg\|\sum_{k=0}^n \sum_{i\in F_{\sigma|k}}y^{**}_{\xi_{\sigma|k}}(e_i^*)e_i\bigg\|
\end{split}
\end{equation}
By (\ref{lemind1}), (\ref{lemind2}) and (vi) we obtain

\[\begin{split}\bigg\|\sum_{k=0}^n a_k y_{\sigma|k}\bigg\|&\leq
\sum_{k=0}^n |a_k|\bigg\| y_{\sigma|k}-\sum_{i\in F_{\sigma|k}}d_iy^{**}_{\xi_{\sigma|k}}(e_i^*)e_i\bigg\|
+\bigg\|\sum_{k=0}^n a_k\sum_{i\in F_{\sigma|k}}d_iy^{**}_{\xi_{\sigma|k}}(e_i^*)e_i\bigg\|\\&\leq \max_{0\leq k\leq n}|a_k|\bigg(\sum_{k=0}^n\ee_k\bigg)+\max_{0\leq k\leq n}|a_k|\bigg\|\sum_{k=0}^n
\sum_{i\in F_{\sigma|k}}y^{**}_{\xi_{\sigma|k}}(e_i^*)e_i\bigg\|\\&\leq(1+\ee)\max_{0\leq k\leq n}|a_k|.
\end{split}\]

 We present now the general
inductive step of the construction. Let us suppose that the
construction has been carried out for  all $s\in 2^{\leqslant n}$. For
every $s\in 2^{n}$, we define
 \begin{equation}\label{lemind3} \Gamma_{s}^1=\{\xi\in\Gamma_s:
 \;\forall i\in F_s,\,\,
|y^{**}_{\xi}(e_i^*)-y^{**}_{\xi_s}(e_i^*)|<\ee_{n+1}/\#F_s\}.\end{equation}
 Since $F_s$ is a finite subset of $\nn$,
the set $\{y^{**}_{\xi}:\xi\in\Gamma_{s}^1\}$
is a relatively  weak$^*$-open neighborhood of $y^{**}_{\xi_s}$ in
$\{y^{**}_{\xi}\}_{\xi\in \Gamma_s}$. By our inductive assumption,
$y^{**}_{\xi_s}$ is a
weak$^*$-condensation point of $\{y^{**}_{\xi}\}_{\xi\in
\Gamma_s}$ and therefore for all $s\in 2^n$ the set
$\Gamma_{s}^1$ is uncountable.
For every $s\in 2^n$, we choose $\Gamma_{s^\smallfrown 0}^1$ and
$\Gamma_{s^\smallfrown 1}^1$ uncountable subsets of $\Gamma_{s}^1$ with
$\Gamma_{s^\smallfrown 0}^1\cap\Gamma_{s^\smallfrown 1}^1=\varnothing$.
Applying  Lemma \ref{fact2} we
obtain a $2^{n+1}$-tuple $(O_s)_{s\in 2^{n+1}}$ of pairwise disjoint open
subsets of $2^\nn$ and a family $(\Gamma_s^2)_{s\in 2^{n+1}}$ such
that for each $s\in 2^{n+1}$, $\Gamma_s^2$ is an uncountable
subset of $\Gamma_{s^-}^1$ and  for all $\xi\in \Gamma_s^2$,
\begin{equation}\label{lemind4}\tau_\xi(2^\nn\setminus O_s)<\rho_0/2^{n+3}.\end{equation}
For every $s\in 2^{n+1}$ we set $U_s=O_s\cap U_{s^-}$. Notice that $U_s=\bigcup_{t\in B_s}V_t$, where $B_s$ is an
antichain of $2^{<\nn}$. Since
$\Gamma^2_{s}\subseteq \Gamma^1_{s^-}\subseteq
\Gamma_{s^-}$, using  (i), we get that for all $s\in 2^{n+1}$ and all
$\xi\in \Gamma^2_s$,
\begin{equation}\label{lemind5}\tau_\xi(2^\nn\setminus U_s)<\bigg(\sum_{i=0}^{n+1}2^{-(i+2)}\bigg)\rho_0.\end{equation}
Moreover as $(\sum_{i=0}^{n+1}2^{-(i+2)})\rho_0<\rho_0/2$ and
$\|\tau_\xi\|>\rho_0$, we get that for all
$\xi\in\Gamma_s^2$,
\begin{equation}\label{lemind6}\tau_\xi(U_s)>\rho_0/2.\end{equation}
We set $\Gamma_s=\Gamma^2_{s}$ and we choose $\xi_s$ in $\Gamma_s$, such that $y^{**}_{\xi_s}$ is
weak$^*$-condensation point of the set $\{y^{**}_{\xi}\}_{\xi\in\Gamma_s}$.
Since for all $\xi<\omega_1$, we have $\mu_{T^{**}(y^{**}_\xi)}\geq \tau_\xi$,
by Lemma \ref{fact}, we get that for every $s\in 2^{n+1}$ there exist  $y_s\in Y$, a finite subset $F_s$ of $\nn$,
a sequence of scalars $\{d_i\}_{i\in F_s}$, with $\max F_{s^-}<\min F_s$, $0\leq d_i\leq 1$ for every $i\in F_s$
and a finite antichain $A_s$ of $2^{<\nn}$ with $A_s\subseteq\bigcup_{t\in B_s}T_t$, such that
\begin{equation}\label{lemind7}
\bigg\|y_s-\sum_{i\in F_s}d_iy^{**}_{\xi_s}(e_i^*)e_i\bigg\|<\ee_{n+1}\,\, \text{and}\,\, \sum_{t\in A_s}|e_t^*(T(y_s))|>\rho_0/2.
\end{equation}
By (\ref{lemind1})-(\ref{lemind7}), the proof of the inductive step is complete.
\end{proof}

Let $(x_s)_{s\in 2^{<\nn}}$ be a family in $S^1$. We will say that $(x_s)_{s\in 2^{<\nn}}$ is
a \textit{block family} in $S^1$, if $(x_s)_{s\in 2^{<\nn}}$ is a block sequence of $(e_s)_{s\in 2^{<\nn}}$,
with the natural ordering of $2^{<\nn}$.

\begin{prop}\label{comp}
Let $(y_s)_{s\in 2^{<\nn}}$ be a block family in $S^1$ and  $(x_s^*)_{s\in 2^{<\nn}}$ a family in $(S^1)^*$
with the following properties.
\begin{enumerate}
\item[(a)] There is a constant $c>0$ such that $x_s^*(y_s)\geq c$,
for all $s\in 2^{<\nn}$.
\item[(b)] For every $s\in 2^{<\nn}$, $x_s^*=\sum_{t\in A_s}\ee_t e_t^*$, where $A_s$ is a finite antichain of $2^{<\nn}$
with $A_s\subseteq supp(y_s)$ and $\ee_t\in\{-1,1\}$, for all $t\in A_s$.
\item[(c)] For every $s\perp s'$, $A_s\perp A_{s'}$.
\item[(d)] There is a constant $C>0$ such
that for every $\sigma\in 2^\nn$, $n\geq 0$ and sequence $(a_k)_{k=0}^n$ of scalars,
$\|\sum_{k=0}^n a_k y_{\sigma|k}\|\leq C\max_{0\leq k\leq n}|a_k|$.
\end{enumerate}
Then the following hold.
\begin{enumerate}
\item[(i)] The family $(y_s)_{s\in 2^{<\nn}}$ is equivalent to the basis $(e_s)_{s\in 2^{<\nn}}$ of $S^1$.
\item[(ii)] There exists a projection $P: S^1\to [(y_s)_{s\in 2^{<\nn}}]$ with $\|P\|\leq C/c$.
\end{enumerate}
\end{prop}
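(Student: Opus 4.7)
The strategy is to derive (i) as a direct application of Proposition \ref{eq} and then to construct the projection in (ii) via the biorthogonal-type functionals $\widetilde{x}_s^{*}=x_s^{*}/x_s^{*}(y_s)$. The crucial observation used in both halves is that hypothesis (c) packages the $A_s$ into a single antichain of $2^{<\nn}$ whenever the index set $\{s\}$ is itself an antichain of $2^{<\nn}$.

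For (i), I will verify the three hypotheses of Proposition \ref{eq} for the family $(y_s)_{s\in 2^{<\nn}}$. Unconditionality with constant $K=1$ is immediate, since $(y_s)_s$ is a block family of the $1$-unconditional basis $(e_s)_{s\in 2^{<\nn}}$ of $S^1$, and the upper $c_0$-estimate along branches is exactly hypothesis~(d). The substantive point is the lower antichain estimate. Given a finite antichain $A$ of $2^{<\nn}$ and scalars $(\lambda_s)_{s\in A}$, pick signs $\eta_s\in\{-1,+1\}$ with $\eta_s\lambda_s=|\lambda_s|$ and set $z^{*}=\sum_{s\in A}\eta_s x_s^{*}$. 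By (b) and (c), the support of $z^{*}$ is the antichain $\bigcup_{s\in A}A_s$ and the coefficients are $\pm 1$, so a direct computation gives $\|z^{*}\|_{(S^1)^{*}}\le 1$. Since $A_s\subseteq\mathrm{supp}(y_s)$ and the $y_s$ have pairwise disjoint supports, $x_s^{*}(y_{s'})=0$ for $s\neq s'$, whence
\[z^{*}\Big(\sum_{s\in A}\lambda_s y_s\Big)=\sum_{s\in A}|\lambda_s|\,x_s^{*}(y_s)\ge c\sum_{s\in A}|\lambda_s|.\]
Proposition \ref{eq} now delivers (i) with $c\|\sum_s\lambda_s e_s\|_{S^1}\le\|\sum_s\lambda_s y_s\|\le C\|\sum_s\lambda_s e_s\|_{S^1}$.

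For (ii), set $\widetilde{x}_s^{*}=x_s^{*}/x_s^{*}(y_s)$, so by the support argument above $\widetilde{x}_s^{*}(y_{s'})=\delta_{s,s'}$, and put $P(x)=\sum_{s\in 2^{<\nn}}\widetilde{x}_s^{*}(x)\,y_s$. For $x\in c_{00}(2^{<\nn})$ only finitely many terms are nonzero, because (c) forces each $t\in 2^{<\nn}$ to lie in at most one $A_s$. Using the upper $S^1$-estimate from (i),
\[\|P(x)\|\le C\Big\|\sum_s\widetilde{x}_s^{*}(x)e_s\Big\|_{S^1}=C\sup_{B}\sum_{s\in B}|\widetilde{x}_s^{*}(x)|,\]
the supremum taken over finite antichains $B$ of $2^{<\nn}$. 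For a fixed $B$, inserting signs and expanding gives $\sum_{s\in B}\eta_s\widetilde{x}_s^{*}(x)=\sum_{t\in\bigcup_{s\in B}A_s}\mu_t e_t^{*}(x)$ with $|\mu_t|\le 1/c$; since $\bigcup_{s\in B}A_s$ is again an antichain by (c), this is at most $(1/c)\|x\|_{S^1}$. Thus $\|P(x)\|\le (C/c)\|x\|$ on $c_{00}(2^{<\nn})$, $P$ extends uniquely to a bounded operator on $S^1$ of norm $\le C/c$, and the biorthogonality $\widetilde{x}_s^{*}(y_{s'})=\delta_{s,s'}$ makes it a projection onto $[(y_s)_{s\in 2^{<\nn}}]$. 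The only technical point worth flagging is the dual-norm bound $\|z^{*}\|\le 1$ and the parallel coefficient bound for $P$; both rest on the single observation that condition (c) turns unions of $A_s$'s over antichain indices into antichains of $2^{<\nn}$, which is exactly where $S^1$-type estimates bite.
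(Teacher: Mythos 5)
Your argument is correct, and part (i) is just the spelled-out version of what the paper dismisses with ``we easily observe that the assumptions of Proposition \ref{eq} hold'': your functional $z^{*}=\sum_{s\in A}\eta_s x_s^{*}$, supported on the antichain $\bigcup_{s\in A}A_s$ with $\pm1$ coefficients, is exactly the witness needed for the lower estimate. Where you genuinely diverge is in (ii). The paper bounds $\|P\|$ by a second direct application of Lemma \ref{ineq}: for a norm-one $x^{*}\in(S^1)^{*}$ it splits the sum $\sum_{|s|\le n}\frac{x_s^{*}(x)}{x_s^{*}(y_s)}x^{*}(y_s)$ into an antichain part, controlled by $\|x\|/c$ via (a)--(c), and branch parts, controlled by $C$ via (d). You instead factor $P$ through the coefficient map: $x\mapsto(\widetilde{x}_s^{*}(x))_s$ has norm at most $1/c$ as a map into the $S^1$-coefficient space (this is your antichain-union observation from (c) again), and the synthesis map $(\lambda_s)\mapsto\sum\lambda_s y_s$ has norm at most $C$ by the upper estimate already established in (i). Both routes rest on the same two ingredients and give the same bound $C/c$; yours is slightly cleaner in that Lemma \ref{ineq} is invoked only once (inside Proposition \ref{eq}), at the cost of having to establish (i) before (ii) rather than in parallel. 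One cosmetic point: the fact that each $t\in 2^{<\nn}$ lies in at most one $A_s$ is not a consequence of (c) alone (which only treats incomparable $s,s'$) but of (b) together with the pairwise disjointness of the supports of the $y_s$, which you had already invoked; this does not affect the argument.
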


\begin{proof}
(i) Since $(y_s)_{s\in 2^{<\nn}}$ is a block family in $S^1$, hence $1$-unconditionally basic,
we easily observe that $(y_s)_{s\in 2^{<\nn}}$ satisfies the
assumptions of Proposition \ref{eq}. Therefore (i) holds.

(ii) We define $P:S^1\rightarrow S^1$ by \[P(x)=\sum_{s\in 2^{<\nn}}\frac{x_s^*(x)}{x_s^*(y_s)}y_s ,\,\, x\in S^1.\]
By (b) we have that for every $s\in 2^{<\nn}$, $P(y_s)=y_s$. Let $x\in S^1$, $n\geq 0$ and $x^*\in (S^1)^*$ with
$\|x^*\|=1$. By Lemma \ref{ineq}, (with $|x_s^*(x)|/x_s^*(y_s)$ in place of $\lambda_s$ and $|x^*(y_s)|$ in place of $\alpha_s$),
there is an antichain $A$ of $ 2^{\leqslant n}$ and a
family of branches $(b_t)_{t\in A}$ of $2^{\leqslant n}$ such that
\[\bigg|\sum_{|s|\leq n}\frac{x_s^*(x)}{x_s^*(y_s)}x^*(y_s)\bigg|\leq\sum_{t\in A} \bigg(\sum_{s\in b_t}|x^*(y_s)|\bigg)\frac{|x_t^*(x)|}{x_t^*(y_t)}.\]
By (a), (b) and (c), we have that $\sum_{t\in A}|x_t^*(x)|/x_t^*(y_t)\leq \|x\|/c$
and by (d), we have that $\sum_{s\in b_t}|x^*(y_s)|\leq C$, for all $t\in A$. Hence
\[\bigg\|\sum_{|s|\leq n}\frac{x_s^*(x)}{x_s^*(y_s)} y_s\bigg\|\leq \frac{C}{c}\|x\|.\]
Since the above inequality holds for all $x\in S^1$ and $n\geq 0$, we obtain that $P$ is a bounded projection onto
$[(y_s)_{s\in 2^{<\nn}}]$, with $\|P\|\leq C/c$.
\end{proof}

The following lemma is easily proved by using a sliding hump argument.

\begin{lem}\label{lsubtree}
Let $(x_s)_s$ a family in $S^1$ such that for every $\sigma\in 2^\nn$, the sequence $(x_{\sigma|n})_n$
is weakly null. Then for every $\dd>0$, there exist a dyadic subtree $(t_s)_s$ of $2^{<\nn}$
and a block family $(w_s)_s$ in $S^1$ with the natural ordering of $2^{<\nn}$, such that
\[\sum_{s\in 2^{<\nn}}\|x_{t_s}-w_s\|<\dd.\]
\end{lem}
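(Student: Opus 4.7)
The plan is to run a standard sliding-hump construction, enumerating $2^{<\nn}$ in the natural order as $v_0, v_1, v_2, \ldots$ and picking each $t_{v_n}$ deep enough in its admissible cone to make $x_{t_{v_n}}$ a small perturbation of a finitely supported vector $w_{v_n}$ whose support lies after all the previous ones. Fix at the outset a summable sequence $(\delta_n)$ of positive reals with $\sum_n \delta_n < \delta$.

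The engine of the induction is the following elementary fact: given any $t' \in 2^{<\nn}$, any $N \in \nn$, and any $\epsilon > 0$, there exist $t \sqsupseteq t'$ in $2^{<\nn}$ and a finitely supported $w \in S^1$ with $\mathrm{supp}(w) \subseteq \{s_j : j > N\}$ and $\|x_t - w\| < \epsilon$. Indeed, fix any $\sigma \in 2^\nn$ extending $t'$; the hypothesis gives that $(x_{\sigma|k})_k$ is weakly null, hence the basis projection $P_N$ onto $\mathrm{span}\{e_{s_0}, \ldots, e_{s_N}\}$ (which has finite-dimensional range) satisfies $\|P_N x_{\sigma|k}\| \to 0$. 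Choose $k$ with $|\sigma|k| > |t'|$ and $\|P_N x_{\sigma|k}\| < \epsilon/2$, then $M > N$ with $\|x_{\sigma|k} - P_M x_{\sigma|k}\| < \epsilon/2$, and put $t = \sigma|k$, $w = (P_M - P_N) x_{\sigma|k}$.

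Given this tool, the inductive step becomes routine bookkeeping. At stage $n \geq 1$, write $v_n = v^\smallfrown i$ with $v = v_k$, $k < n$, and let $N$ be the largest natural-order index appearing in $\mathrm{supp}(w_{v_0}) \cup \cdots \cup \mathrm{supp}(w_{v_{n-1}})$. The admissible candidates for $t_{v_n}$ are strict extensions of $t_v$ that, when the sibling $t_{v^\smallfrown(1-i)}$ has already been chosen, must be incomparable with it. This admissible set always contains an infinite cone $T_{t'}$: if $i = 0$ the sibling has not yet been treated (since $v^\smallfrown 0$ precedes $v^\smallfrown 1$ in the natural order), and one takes $t' = t_v^\smallfrown 0$; if $i = 1$, writing $t_{v^\smallfrown 0} = t_v^\smallfrown(\alpha, \ldots)$ one takes $t' = t_v^\smallfrown(1-\alpha)$, and every extension of $t'$ is then incomparable with $t_{v^\smallfrown 0}$. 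Applying the fact above to $T_{t'}$ with $\epsilon = \delta_n$ yields the desired $t_{v_n}$ and $w_{v_n}$. The base case $n = 0$ is trivial: set $t_{v_0} = \varnothing$ and approximate $x_\varnothing$ within $\delta_0$ by a finitely supported $w_{v_0}$.

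The required dyadic-subtree property, that $v \perp v'$ forces $t_v \perp t_{v'}$, follows by transitivity from the sibling incomparability enforced at each step, and the estimate $\sum_s \|x_{t_s} - w_s\| < \sum_n \delta_n < \delta$ is immediate. There is no serious obstacle; the only point calling for a little care is the cone bookkeeping in the inductive step, which is the dyadic analogue of the usual sliding-hump argument for sequences.
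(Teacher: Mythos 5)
Your proof is correct and is precisely the sliding-hump argument the paper has in mind (the paper states the lemma without proof, remarking only that it follows from a sliding-hump argument). The two points that need care --- the finite-rank projection estimate driven by weak nullity along each branch, and the cone bookkeeping that keeps siblings incomparable so that incomparability propagates to the whole subtree --- are both handled properly.
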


\begin{prop}\label{imp}
Let $X$ be a Banach space with an unconditional basis,
$Y$ a closed subspace of $X$ and $T: X \to S^1$ be a
bounded  linear operator, such that the set $\{\mu_{T^{**}(y^{**})}: y^{**}\in\mathcal{B}_1(Y) \}$
is non-separable subset of $\mathcal{M}(2^\nn)$. Then there exists a subspace $Z$ of
 $Y$ isomorphic to $S^1$ such that the restriction of $T$ on $Z$  is an isomorphism
 and $T[Z]$ is complemented in $S^1$. Moreover the space $Z$ is complemented in $X$.
\end{prop}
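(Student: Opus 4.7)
The plan is to feed Lemma \ref{lemind} with a small $\ee>0$ to extract a family $(y_s)_s$ in $Y$ and antichains $(A_s)_s$ satisfying (C1)--(C3), and then to refine it along a dyadic subtree to something equivalent to the $S^1$-basis whose image is complemented in $S^1$. A key preliminary observation is that for every $\sigma\in 2^\nn$ the sequence $(y_{\sigma|n})_n$ is weakly null in $X$: by (C3), $\sup\|\sum_{k\in F}\theta_k y_{\sigma|k}\|\le 1+\ee$ over finite $F$ and sign choices $\theta$, so the series $\sum_n y_{\sigma|n}$ is weakly unconditionally Cauchy in $X$, and WUC forces its terms to go to $0$ weakly. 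Hence $(T(y_{\sigma|n}))_n$ is weakly null in $S^1$ as well.

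Applying a joint version of Lemma \ref{lsubtree} in $X\oplus S^1$ (a routine sliding-hump argument, available because both sequences are weakly null along branches) produces a dyadic subtree $(t_s)_s$, a block family $(v_s)_s$ in $X$ and a block family $(w_s)_s$ in $S^1$ with $\sum_s(\|y_{t_s}-v_s\|_X+\|T(y_{t_s})-w_s\|_{S^1})<\dd$ for $\dd>0$ arbitrarily small. Set $A_s'=A_{t_s}\cap\mathrm{supp}(w_s)$ and $x_s^*=\sum_{t\in A_s'}\mathrm{sign}(e_t^*(w_s))\,e_t^*$. I then verify the hypotheses of Proposition \ref{comp} for $(w_s)$ and $(x_s^*)$: (b) is built in; (c) follows from (C1) since $s\perp s'$ forces $t_s\perp t_{s'}$; and (d), the upper branch $c_0$-estimate for $(w_s)$, is inherited from (C3) via $T$ and the $w_s$-approximation, using that $(y_{t_{\sigma|k}})_k$ is a subsequence of the branch $(y_{\tau|n})_n$ singled out by the subtree. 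For (a), positions $t\in A_{t_s}\setminus A_s'$ lie outside $\mathrm{supp}(w_s)$, so $\sum_{t\in A_{t_s}\setminus A_s'}|e_t^*(T(y_{t_s}))|\le\|T(y_{t_s})-w_s\|$; combined with (C2) this gives $x_s^*(w_s)\ge\rho/2$ once $\dd$ is small enough. Proposition \ref{comp} then yields that $(w_s)$ is equivalent to the $S^1$-basis and that $[w_s]$ is complemented in $S^1$ by a projection $P$ with controlled norm.

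Set $Z=[y_{t_s}:s\in 2^{<\nn}]$. I apply Proposition \ref{eq} to $(v_s)$: (i) is automatic since $(v_s)$ is a block family of a $1$-unconditional basis of $X$; (iii) is inherited from (C3) plus the $v_s$-approximation; (ii) is obtained, for a finite antichain $A$, by pulling back through $T$ and inverting via the $S^1$-equivalence of $(w_s)$, using $\|\sum_{s\in A}\lambda_s e_s\|_{S^1}=\sum_{s\in A}|\lambda_s|$ and absorbing the $\dd$-errors in the constant (they scale by $\max|\lambda_s|\le\sum|\lambda_s|$). Thus $(v_s)$, and by perturbation $(y_{t_s})$, is equivalent to the $S^1$-basis, so $Z\cong S^1$. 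Since $T(y_{t_s})\approx w_s$ and $(w_s)$ is $S^1$-equivalent, $T|_Z$ is an isomorphism; since $[w_s]$ is complemented in $S^1$, so is $T[Z]$ by a perturbation argument. For complementation of $Z$ in $X$, the composition $\tilde P=U\circ P\circ T:X\to Z$, where $U:[w_s]\to Z$ is the isomorphism $w_s\mapsto y_{t_s}$, maps each $y_{t_s}$ to an element close to $y_{t_s}$; the standard perturbation-to-a-projection argument (if $\dd$ is small enough, $\tilde P|_Z$ is invertible on $Z$) yields a bounded projection of $X$ onto $Z$.

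The main obstacle is the joint sliding-hump in the second paragraph: one must produce a single subtree along which both $(y_{t_s})$ is an approximate $X$-block and $(T(y_{t_s}))$ is an approximate $S^1$-block. Without the $X$-side block structure, unconditionality of $(y_{t_s})$ (needed for hypothesis (i) of Proposition \ref{eq} to produce the $S^1$-upper estimate in $X$) is unavailable, since the family delivered by Lemma \ref{lemind} is only organized along branches and need not be block across branches. Everything after this refinement is routine perturbation and bookkeeping.
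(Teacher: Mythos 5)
Your argument is correct and follows the paper's skeleton --- Lemma \ref{lemind}, a sliding hump to a block family $(w_s)_s$ in $S^1$, Proposition \ref{comp} for $(w_s)_s$, and then pulling everything back through $T$ --- but it diverges exactly at the point you flag as ``the main obstacle,'' and that obstacle is in fact avoidable. You run the sliding hump jointly in $X\oplus S^1$ in order to make $(y_{t_s})_s$ approximately block in $X$, because you want hypothesis (i) of Proposition \ref{eq} (unconditionality) on the $X$-side family. The paper never needs this: in the proof of Proposition \ref{eq} the upper $S^1$-estimate uses only hypothesis (iii) (the branch $c_0$-estimate) together with the combinatorial Lemma \ref{ineq}; unconditionality and the lower $\ell_1$-estimate enter only in the proof of the lower bound. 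Hence $(y_{t_s})_s$ inherits an upper $S^1$-estimate directly from (C3) with no block structure in $X$, and its lower $S^1$-estimate is then free, since $\big\|\sum\lambda_s y_{t_s}\big\|\geq\|T\|^{-1}\big\|\sum\lambda_s T(y_{t_s})\big\|$ and $(T(y_{t_s}))_s$ has already been shown equivalent to the $S^1$-basis via $(w_s)_s$. This dispenses with the joint sliding hump (Lemma \ref{lsubtree} as stated, applied only to $(T(y_s))_s$, suffices) and with all the $X$-side perturbation bookkeeping. The paper also gets the projection onto $Z$ more directly as $Q=T^{-1}PT$, with $P$ a projection of $S^1$ onto $T[Z]$ and $T^{-1}$ the inverse of the isomorphism $T|_Z$, avoiding your invertibility-of-a-perturbation step. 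The rest of your write-up --- the WUC argument for weak nullity along branches, the verification of (a)--(d) of Proposition \ref{comp} for $(w_s)_s$, and the lower $\ell_1$-estimate over antichains --- matches the paper and is sound.
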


\begin{proof}
By passing to an equivalent norm, we may assume that the basis of $X$ is 1-unconditional.
Applying Lemma \ref{lemind}, there is a constant $\rho >0$, such
that for $0<\ee<\rho$ there exist a family $(y_s)_s$ of elements of $Y$
and a family $(A_s)_s$ of finite antichains of $2^{<\nn}$ such that the following are satisfied.
\begin{enumerate}
\item[(C1)] For every $s\perp t$, $A_s\perp A_t$.
\item[(C2)] For every $s\in 2^{<\nn}$, $\sum_{t\in A_s}|e_t^*(T(y_s))|>\rho$.
\item[(C3)] For every $\sigma\in 2^\nn$, $n\geq 0$ and sequence $(a_k)_{k=0}^n$ of scalars,
\[\bigg\|\sum_{k=0}^n a_k y_{\sigma|k}\bigg\|\leq (1+\ee)\max_{0\leq k\leq n}|a_k|.\]
\end{enumerate}
By the fact that the operator $T$ is bounded and (C3),
we have that for every $\sigma\in 2^\nn$, the sequence $(T(y_{\sigma|k}))_k$ is weakly null. Therefore by
Lemma \ref{lsubtree}, for $0<\dd<\ee$, there exist a dyadic subtree $(t_s)_s$ of $2^{<\nn}$
and a block family $(w_s)_s$ in $S^1$, such that
\begin{equation}\label{imp1}
\sum_{s\in 2^{<\nn}}\|T(y_{t_s})-w_s\|<\dd.
\end{equation}
First we will show that the family $(w_s)_s$ is equivalent to the basis  $(e_s)_{s\in 2^{<\nn}}$ of $S^1$
and the space $[(w_s)_s]$ is complemented in $S^1$. Indeed,
for every $s\in 2^{<\nn}$, let
$x^*_s=\sum_{v\in A_s}\ee_v e^*_v$, where $(\ee_v)_{v\in A_s}$ is a family of signs so that
\[x^*_s(T(y_{t_s}))=\sum_{v\in A_s}|e_v^*(T(y_{t_s}))|>\rho.\]
Without loss of generality, we may assume that $A_s\subseteq supp(w_s)$ for every $s\in 2^{<\nn}$
(otherwise, we replace $A_s$ by  $B_s=A_s\cap supp(w_s)$). Then by (\ref{imp1}),
it is easy to see that for every finite antichain $A$ of $2^{<\nn}$
\begin{equation}\label{imp2}
x^*_s(w_s)>\rho-\ee, \,\,\text{for all}\,\, s\in 2^{<\nn}.
\end{equation}
Since $(w_s)_s$ is a block family in $S^1$, by (C1) and (\ref{imp2}),
we get that for every finite antichain $A$ of $2^{<\nn}$ and family $(\lambda_s)_{s\in A}$ of scalars,
\begin{equation}\label{imp3}
\bigg\|\sum_{s\in A} \lambda_s w_s\bigg\|\geq (\rho-\ee)\sum_{s\in A} |\lambda_s|.
\end{equation}
On the other hand by (C3), for every $\sigma\in 2^\nn$, $n\geq 0$ and sequence $(a_k)_{k=0}^n$ of scalars,
\begin{equation}\label{imp4}
\bigg\|\sum_{k=0}^n a_k w_{t_{\sigma|k}}\bigg\|\leq ((1+\ee)\|T\|+\ee)\max_{0\leq k\leq n}|a_k|.
\end{equation}
By (\ref{imp2}), (\ref{imp3}) and (\ref{imp4}), we obtain that $(w_s)_s$  satisfies the
assumptions of Proposition \ref{comp}.
Hence the family $(w_s)_s$ is equivalent to the basis $(e_s)_{s\in 2^{<\nn}}$
and the space $[(w_s)_s]$ is complemented in $S^1$. Therefore by (\ref{imp1}), for $\delta>0$ sufficiently small, the family $(T(y_{t_s}))_s$ is equivalent to the basis $(e_s)_{s\in 2^{<\nn}}$ and
the subspace $T[Z]$ is complemented in $S^1$, where $Z=[(y_{t_s})_s]$.  Now by (C3), as in the proof of
Proposition \ref{eq}, $((y_{t_s})_s$ has an upper $S^1$-estimate, which gives that
the family $((y_{t_s})_s$ is also equivalent to the basis $(e_s)_{s\in 2^{<\nn}}$.

We now proceed to show that the space $Z$ is complemented in $X$.
Let $P$ be a bounded projection from $S^1$ onto $T[Z]$. We set $Q=T^{-1}PT$. Then
$Q$ is a a bounded projection from $X$ onto $Z$. This completes the proof of the proposition.
\end{proof}

\begin{lem}\label{ldom}
Let $x^{**}\in\mathcal{B}_1(S^1)$. Then for every $\ee>0$ there exists $n\in\nn$ such that
for every finite antichain $A$ of $2^{<\nn}$,
\[\|x^{**}|\cup_{s\in A} T^n_s\|<\mu_{x^{**}}(\cup_{s\in A}V_s)+\ee.\]
Therefore, for every finite antichain $A$ of $2^{<\nn}$
with $\min\{|s|: s\in A\}\geq n$,
\[\sum_{s\in A}|x^{**}(e_s^*)|<\mu_{x^{**}}(\cup_{s\in A}V_s)+\ee.\]
\end{lem}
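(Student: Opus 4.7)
The plan is to decompose $x^{**}$ as a finitely supported element of $c_{00}(2^{<\nn})$ plus an almost norm-minimizing tail (in the sense of Proposition \ref{basprop}(iii)), and then combine the perpendicularity identity \eqref{equat2} with Proposition \ref{property} applied to a complementary antichain.

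First I would choose $x_0\in c_{00}(2^{<\nn})$ with $\|x^{**}-x_0\|<\mu_{x^{**}}(2^\nn)+\ee/2$, set $y^{**}=x^{**}-x_0\in\mathcal{B}_1(S^1)$, and pick $n\in\nn$ strictly larger than every level occurring in the (finite) support of $x_0$. Since $x_0\in S^1$ has $\mu_{x_0}=0$, Proposition \ref{basprop}(ii) applied in both directions yields $\mu_{y^{**}}=\mu_{x^{**}}$. Moreover, $y^{**}$ agrees with $x^{**}$ on any $D\subseteq\{s:|s|\geq n\}$ because $x_0$ vanishes there, so $\|x^{**}|\cup_{s\in A}T_s^n\|=\|y^{**}|\cup_{s\in A}T_s^n\|$ for every finite antichain $A$.

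Given such $A$, pick $N\geq n$ with $N>\max\{|s|:s\in A\}$ and set $A_N=\{t\in 2^N:t\not\sqsupseteq s\text{ for all }s\in A\}$. Then $A\cup A_N$ is a finite antichain whose associated clopen sets $\{V_s\}_{s\in A}\cup\{V_t\}_{t\in A_N}$ partition $2^\nn$. Putting $D_1=\cup_{s\in A}T_s^n$ and $D_2=\cup_{t\in A_N}T_t^n$ we obtain $D_1\perp D_2$, so by \eqref{equat2},
\[\|y^{**}|D_1\|+\|y^{**}|D_2\|=\|y^{**}|D_1\cup D_2\|\leq\|y^{**}\|.\]
Proposition \ref{property} gives $\|y^{**}|D_2\|\geq\mu_{y^{**}}(\cup_{t\in A_N}V_t)=\mu_{x^{**}}(2^\nn)-\mu_{x^{**}}(\cup_{s\in A}V_s)$, and combined with $\|y^{**}\|<\mu_{x^{**}}(2^\nn)+\ee/2$ this produces the first inequality of the lemma (with $\ee/2$ in place of $\ee$). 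The second inequality follows at once: when $\min\{|s|:s\in A\}\geq n$, the antichain $A$ itself is contained in $\cup_{s\in A}T_s^n$, hence $\sum_{s\in A}|x^{**}(e_s^*)|\leq\|x^{**}|\cup_{s\in A}T_s^n\|$.

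The delicate point is the uniformity: a single threshold $n$ must work simultaneously for every finite antichain $A$. This is secured because $n$ depends only on the fixed finite-support approximation $x_0$, while the completion $A_N$ serves as the device that upgrades the crude bound $\|y^{**}\|<\mu_{x^{**}}(2^\nn)+\ee/2$ into the sharper $A$-dependent bound $\mu_{x^{**}}(\cup_{s\in A}V_s)+\ee/2$; the antichain-dependence is absorbed entirely by the perpendicular piece $D_2$ through Proposition \ref{property}.
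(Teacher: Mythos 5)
Your proof is correct and follows essentially the same route as the paper: the uniform $n$ comes from the global bound $\inf_m\|x^{**}|T_\varnothing^m\|=\mu_{x^{**}}(2^\nn)$ (your $x_0$ plays the role of the truncation at level $n$), and the antichain-dependent estimate is extracted by completing $A$ to a maximal antichain and cancelling the perpendicular piece via \eqref{equat2} and Proposition \ref{property}. The only differences are cosmetic — an explicit $c_{00}$-approximant instead of the tail-norm bound, and the specific level-$N$ completion $A_N$ instead of an arbitrary one.
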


\begin{proof}Let $\ee>0$ and $A$ be a finite antichain of $2^{<\nn}$.
By the definition of $\mu_{x^{**}}$, there exists $n\in\nn$ such that
\begin{equation}\label{ldom1}
\|x^{**}|T^n_{\varnothing}\|<\mu_{x^{**}}(2^\nn)+\ee.
\end{equation}
We choose an antichain $A'$ of $2^{<\nn}$ such that $A\cup A'$ is a finite maximal antichain
of $2^{<\nn}$ and $A\cap A'=\varnothing$.
Then
\begin{equation}\label{ldom2}
\|x^{**}|\cup_{s\in A} T^n_s\|+\|x^{**}|\cup_{s\in A'} T^n_s\|=\|x^{**}| T^n_{\varnothing}\|
\end{equation} and
\begin{equation}\label{ldom3}
\mu_{x^{**}}(2^\nn)=\mu_{x^{**}}(\cup_{s\in A}V_s)+\mu_{x^{**}}(\cup_{s\in A'}V_s)
\end{equation}
By (\ref{ldom1}), (\ref{ldom2}) and (\ref{ldom3}), we get that for every $n\geq l$,
\[\begin{split}\|x^{**}|\cup_{s\in A} T^n_s\|+\|x^{**}|\cup_{s\in A'} T^n_s\|&<\mu_{x^{**}}(\cup_{s\in A}V_s)+\mu_{x^{**}}(\cup_{s\in A'}V_s)+\ee\\&
\leq\mu_{x^{**}}(\cup_{s\in A}V_s)+\|x^{**}|\cup_{s\in A'} T^n_s\|+\ee.\end{split}\]
Therefore, we obtain that $\|x^{**}|\cup_{s\in A} T^n_s\|<\mu_{x^{**}}(\cup_{s\in A}V_s)+\ee$.
\end{proof}

\begin{prop}\label{nonsep}
Let $(y_s)_{s\in 2^{<\nn}}$ be a block family in $S^1$ with the following properties.
\begin{enumerate}
\item[(i)] There is a constant $\rho>0$ such that
$\|\sum_{|s|=n}\lambda_s y_s\|\geq\rho\sum_{|s|=n}|\lambda_s|$, for every $n\geq 0$
and family $(\lambda_s)_{|s|=n}$ of scalars.
\item[(ii)] For every $\sigma\in 2^\nn$, the sequence $(y_{\sigma|k})_{k=0}^\infty$ is equivalent to the basis of $c_0$.
\end{enumerate}
Then the set $\{\mu_{y^{**}_{\sigma}}: \sigma\in 2^\nn\}$ is a non-separable subset of $\mathcal{M}(2^\nn)$, where
for every $\sigma\in 2^\nn$,  $y^{**}_{\sigma}=w^*-\lim_n\sum_{k=0}^n y_{t_{\sigma|k}}$.
\end{prop}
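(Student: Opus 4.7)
My plan is to exhibit uncountably many indices $\sigma \in 2^{\nn}$ whose measures $\mu_{y^{**}_\sigma}$ are pairwise norm-separated in $\mathcal{M}(2^{\nn})$.

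First, applying (i) with $\lambda_s = 1$ and $\lambda_{s'} = 0$ for $s' \neq s$ gives $\|y_s\|_{S^1} \geq \rho$ for every $s \in 2^{<\nn}$. Fix an antichain $A_s \subseteq \operatorname{supp}(y_s)$ of $2^{<\nn}$ with $\sum_{v \in A_s} |y_s(v)| \geq \rho$ and set $\Omega_s := \bigcup_{v \in A_s} V_v \subseteq 2^{\nn}$. Since $(y_s)$ is a block family in the natural order of $2^{<\nn}$, we have $\min\{|v| : v \in \operatorname{supp}(y_{\sigma|k})\} \to \infty$ as $k \to \infty$ along any branch $\sigma$. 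I would then show
\[\mu_{y^{**}_\sigma}\Bigl(\bigcup_{k \geq n} \Omega_{\sigma|k}\Bigr) \geq \rho\qquad(\sigma \in 2^{\nn},\ n \geq 0).\]
Letting $A'$ be the antichain of $\sqsubseteq$-minimal elements of $\bigcup_{k \geq n} A_{\sigma|k}$, so that $\bigcup_{v \in A'} V_v = \bigcup_{k \geq n} \Omega_{\sigma|k}$, Proposition \ref{property} identifies the left side with $\inf_m \|y^{**}_\sigma \,|\, \bigcup_{v \in A'} T^m_v\|$. Given $m$, pick $k \geq n$ with $A_{\sigma|k}$ lying at levels $\geq m$; then $A_{\sigma|k}$ is a valid antichain inside $\bigcup_{v \in A'} T^m_v$, and block-disjointness gives $y^{**}_\sigma(e_v^*) = y_{\sigma|k}(v)$ for $v \in A_{\sigma|k}$, so the norm in question is at least $\sum_{v \in A_{\sigma|k}} |y_{\sigma|k}(v)| \geq \rho$. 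Continuity of the finite measure from above then yields $\mu_{y^{**}_\sigma}(E_\sigma) \geq \rho$ with $E_\sigma := \limsup_k \Omega_{\sigma|k}$.

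The main obstacle is to promote this pointwise lower bound into uncountable norm-separation of the measures. My plan is a Cantor-scheme recursion producing uncountably many $\sigma$ whose $E$-sets are pairwise disjoint as subsets of $2^{\nn}$. Property (ii) is the decisive input here: the uniform branch bound $\|\sum_k y_{\sigma|k}\|_{S^1} \leq C$ forces that any antichain inside $\bigcup_k A_{\sigma|k}$ has $S^1$-mass at most $C$, hence fully contains at most $\lfloor C/\rho\rfloor$ of the $A_{\sigma|k}$'s; this chain-rigidity along branches, combined with the splitting afforded by (i) applied level-wise across different children, enables the inductive step of a Cantor scheme in which the two sub-families of branches are placed in disjoint clopen sub-regions of $2^{\nn}$. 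Once this is done, a standard transfinite selection of length $\omega_1$, analogous to the argument following Lemma \ref{fact1}, produces $\omega_1$-many $\sigma_\alpha$ with $\|\mu_{y^{**}_{\sigma_\alpha}} - \mu_{y^{**}_{\sigma_\beta}}\| \geq \rho/2$ for $\alpha \neq \beta$: each measure puts mass $\geq \rho$ on its own $E_{\sigma_\alpha}$, while disjointness of the clopens keeps $\mu_{y^{**}_{\sigma_\beta}}(E_{\sigma_\alpha})$ small, giving the lower bound on the norm-difference. The principal technical difficulty is precisely in the Cantor-scheme construction, where the interplay of (i) and (ii) must be orchestrated to guarantee the disjoint-clopens structure.
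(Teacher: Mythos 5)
The first half of your argument is sound: choosing for each $s$ an antichain $A_s\subseteq\mathrm{supp}(y_s)$ with $\sum_{v\in A_s}|y_s(v)|\geq\rho$, and combining Proposition \ref{property} with the fact that the supports of a block family eventually leave every level, you do obtain $\mu_{y^{**}_\sigma}(E_\sigma)\geq\rho$ for $E_\sigma=\limsup_k\Omega_{\sigma|k}$. The gap is everything after that. The step you defer as ``the principal technical difficulty'' --- a Cantor scheme forcing the sets $E_\sigma$ to be pairwise disjoint --- is the entire content of the proposition, and your sketch supplies no mechanism that could produce it. Concretely, nothing prevents every admissible choice of $A_s$ from pointing at one and the same branch of $2^\nn$: take $y_s=\rho e_{u_{i(s)}}+\rho e_{w_s}$, where $(u_i)_i$ is a chain converging to a fixed $\tau_0\in 2^\nn$, $i(s)$ is the position of $s$ in the natural enumeration, and $(w_s)_s$ is a dyadic subtree avoiding $\tau_0$. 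This family satisfies (i) (via the $w$-part, an antichain on each level) and (ii) (both parts are chains along every branch), and the legitimate choice $A_s=\{u_{i(s)}\}$ gives $E_\sigma=\{\tau_0\}$ for \emph{every} $\sigma$; indeed $\mu_{y^{**}_\sigma}=\rho\delta_{\tau_0}+\rho\delta_{\tau'_\sigma}$ carries a common atom of full weight $\rho$. So before any disjointification can start one must split off the common part of the measures, which is exactly the Lebesgue-decomposition argument of Lemma \ref{fact1} --- and that lemma takes non-separability as its \emph{hypothesis}, so appealing to anything of that kind here is circular. Neither the ``chain-rigidity'' observation (at most $C/\rho$ of the $A_{\sigma|k}$ fit inside one antichain) nor ``(i) applied level-wise'' addresses this, because (i) controls incomparability of supports in $2^{<\nn}$, not disjointness of the clopen sets $V_v$ in $2^\nn$.

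The paper sidesteps the construction entirely by arguing by contradiction. If the set were separable, it would admit a norm-condensation point $\mu$; for any $m$ one picks $\sigma_1,\dots,\sigma_m$ with $\|\mu_{y^{**}_{\sigma_i}}-\mu\|\leq\ee/m$, separated by some level $n_0$. Lemma \ref{ldom} provides a level $k\geq n_0$ beyond which $\sum_{s\in A}|y^{**}_{\sigma_i}(e_s^*)|<\mu_{y^{**}_{\sigma_i}}(\cup_{s\in A}V_s)+\ee/m$ for all high antichains $A$. A single norming antichain for $\sum_{i=1}^m y_{\sigma_i|k}$ then splits into pieces $A_i\subseteq\mathrm{supp}(y_{\sigma_i|k})$ whose clopen hulls are automatically pairwise disjoint (they sit inside one antichain of $2^{<\nn}$), and hypothesis (i) yields $m\rho\leq\sum_{i}\mu(\cup_{s\in A_i}V_s)+2\ee\leq\|\mu\|+2\ee$, which is absurd for large $m$. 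You should either adopt this route or actually prove the quantitative disjointness statement your plan requires; as written, the proposal is not a proof.
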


\begin{proof}
If the set $\{\mu_{y^{**}_{\sigma}}: \sigma\in 2^\nn\}$ is separable, we can choose a norm-condensation point
$\mu\in \mathcal{M}(2^\nn)$ of $\{\mu_{y^{**}_{\sigma}}: \sigma\in 2^\nn\}$. Also fix $m\in\nn$ and $\ee>0$.
Then for uncountably many $\sigma\in 2^\nn$, we have that
\begin{equation}\label{nonsep1}
\|\mu_{y^{**}_{\sigma}}-\mu\|\leq \ee/m.
\end{equation}
Let $\sigma_1,...,\sigma_m\in 2^\nn$ satisfying (\ref{nonsep1}) and $n_0\in\nn$
be such that for all $n\geq n_0$ and $1\leq i<j\leq m$, $\sigma_i|n\perp\sigma_j|n$. By Lemma \ref{ldom},
there is $k\geq n_0$ so that
\begin{equation}\label{nonsep2}
\sum_{s\in A}|y^{**}_{\sigma_i}(e_s^*)|<\mu_{y^{**}_{\sigma_i}}(\cup_{s\in A}V_s)+\ee/m,
\end{equation}
for every finite antichain $A$ of $2^{<\nn}$
with $\min\{|s|: s\in A\}\geq k$ and each $i=1,...,m$.
We choose an  antichain $A$ such that
$\|\sum_{i=1}^m y_{\sigma_i|k}\|=\sum_{s\in A}|e^*_s(\sum_{i=1}^m y_{\sigma_i|k})|$
and we set $A_i=A\cap supp(y_{\sigma_i|k})$. Since  $(y_s)_{s\in 2^{<\nn}}$ is a block family in $S^1$,
we may assume that $\min\{|s|: s\in A_i\}\geq k$. Then by (i), (\ref{nonsep1}) and (\ref{nonsep2}),
we get that
\[\begin{split}
m\rho\leq\bigg\|\sum_{i=1}^m y_{\sigma_i|k}\bigg\|&=
\sum_{s\in A}\bigg|e^*_s\bigg(\sum_{i=1}^m y_{\sigma_i|k}\bigg)\bigg|=
\sum_{i=1}^m\sum_{s\in A_i}|y^{**}_{\sigma_i}(e_s^*)|\\&<\sum_{i=1}^m\mu_{y^{**}_{\sigma_i}}(\cup_{s\in A_i}V_s)+\ee
=\mu(\cup_{s\in\cup_{i=1}^m A_i}V_s)+\ee\leq\|\mu\|+\ee,
\end{split}\]
a contradiction for $\ee$ sufficiently small.
\end{proof}

\begin{lem}\label{lequal}
Let $(x_n)_n$, $(y_n)_n$ be sequences in $S^1$ which are equivalent to the $c_0$-basis
and $\sum_n\|x_n-y_n\|<\infty$.
We set \[x^{**}=w^*-\lim_n\sum_{k=0}^nx_k\,\, \text{and}\,\, y^{**}=w^*-\lim_n\sum_{k=0}^ny_k.\]
Then $\mu_{x^{**}}=\mu_{y^{**}}$.
\end{lem}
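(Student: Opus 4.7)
The plan is to reduce the identity $\mu_{x^{**}} = \mu_{y^{**}}$ to the observation that, under the absolute summability hypothesis, the difference $x^{**} - y^{**}$ already lies inside $S^1 \subset (S^1)^{**}$, and then to invoke the subadditivity and positive homogeneity properties of the measure assignment recorded in Proposition \ref{basprop}.

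First I would verify that $x^{**}$ and $y^{**}$ are well-defined elements of $\mathcal{B}_1(S^1)$. A sequence equivalent to the $c_0$-basis is weakly unconditionally Cauchy, so for every $x^*\in (S^1)^*$ the numerical series $\sum_n x^*(x_n)$ converges absolutely; in particular, the partial sums $u_n = \sum_{k=0}^n x_k$ are bounded and weak* Cauchy, so they converge weak* to some $x^{**}\in\mathcal{B}_1(S^1)$ (and similarly $v_n = \sum_{k=0}^n y_k \to y^{**}$ in the weak* topology).

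The key step is the following. Since $\sum_n \|x_n - y_n\| < \infty$ and $S^1$ is complete, the series $\sum_n (x_n - y_n)$ converges in norm to an element $z\in S^1$. Then the sequence $u_n - v_n = \sum_{k=0}^n (x_k - y_k)$ converges in norm to $z$ while at the same time converging in the weak* topology to $x^{**} - y^{**}$. Since norm convergence implies weak* convergence and weak* limits are unique, we conclude that
\[
x^{**} - y^{**} = z \quad \text{in } (S^1)^{**},
\]
with $z \in S^1$. By Proposition \ref{basprop}(iii), this forces $\mu_z(2^\nn) = d(z, S^1) = 0$, and since $\mu_z$ is a positive finite Borel measure on $2^\nn$, we get $\mu_z = 0$.

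To finish, I would apply Proposition \ref{basprop}(i), (ii) twice. Writing $x^{**} = y^{**} + z$ gives $\mu_{x^{**}} \leq \mu_{y^{**}} + \mu_z = \mu_{y^{**}}$, while writing $y^{**} = x^{**} + (-z)$ and using $\mu_{-z} = \mu_z = 0$ gives $\mu_{y^{**}} \leq \mu_{x^{**}}$. Hence $\mu_{x^{**}} = \mu_{y^{**}}$. There is no serious obstacle in this argument; the only point that deserves a brief justification is the identification of $x^{**} - y^{**}$ with $z$, which is just the uniqueness of limits when the same sequence converges both in norm and in the weak* topology.
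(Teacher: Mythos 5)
Your proof is correct, but it takes a more direct route than the paper's. You observe that the summability hypothesis forces the partial sums of $\sum_k(x_k-y_k)$ to converge in norm to some $z\in S^1$, identify $x^{**}-y^{**}$ with $z$ by uniqueness of weak$^*$ limits, kill $\mu_z$ via Proposition \ref{basprop}(iii), and conclude by the subadditivity and homogeneity in Proposition \ref{basprop}(i)--(ii). The paper instead introduces the tails $x^{**}_{|\geq n}=w^*\text{-}\lim_m\sum_{k=n}^m x_k$ and $y^{**}_{|\geq n}$, notes that $\mu_{x^{**}}=\mu_{x^{**}_{|\geq n}}$ and $\mu_{y^{**}}=\mu_{y^{**}_{|\geq n}}$ because the differences lie in $S^1$, and then estimates $\|\mu_{x^{**}}-\mu_{y^{**}}\|\leq\|x^{**}_{|\geq n}-y^{**}_{|\geq n}\|\leq\sum_{k=n}^\infty\|x_k-y_k\|$ using the $1$-Lipschitz property of $x^{**}\mapsto\mu_{x^{**}}$ (Proposition \ref{basprop}(iv)) together with weak$^*$ lower semicontinuity of the bidual norm, and lets $n\to\infty$. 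Both arguments ultimately rest on the same fact --- the measure is blind to perturbations coming from $S^1$ --- but yours exploits the full strength of the hypothesis $\sum_n\|x_n-y_n\|<\infty$ to place the \emph{entire} difference $x^{**}-y^{**}$ inside $S^1$ in one step, which shortcuts the tail-by-tail limiting argument; the paper's version is the one that would survive under the formally weaker assumption that only the tail sums $\sum_{k\geq n}\|x_k-y_k\|$ tend to $0$ (which is of course equivalent here). Your preliminary verification that $x^{**}$ and $y^{**}$ are well-defined elements of $\mathcal{B}_1(S^1)$ is a point the paper leaves implicit, and it is handled correctly.
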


\begin{proof}
For every $n\in\nn$, we set \[x^{**}_{|\geq n}=w^*-\lim_m\sum_{k=n}^m x_k\,\, \text{and}\,\,
y^{**}_{|\geq n}=w^*-\lim_m\sum_{k=n}^m y_k.\]
Since $x^{**}-x^{**}_{|\geq n}\in S^1$ and $y^{**}-y^{**}_{|\geq n}\in S^1$, we have that
$\mu_{x^{**}}=\mu_{x^{**}_{|\geq n}}$ and $\mu_{y^{**}}=\mu_{y^{**}_{|\geq n}}$. Then
from the weak star lower semicontinuity of the second dual norm we get that
\[\begin{split}\|\mu_{x^{**}}-\mu_{y^{**}}\|=\|\mu_{x^{**}_{|\geq n}}-\mu_{y^{**}_{|\geq n}}\|\leq
\|x^{**}_{|\geq n}-y^{**}_{|\geq n}\|&\leq\liminf_m\bigg\|\sum_{k=n}^m x_k-\sum_{k=n}^m y_k\bigg\|
\\&\leq\sum_{k=n}^\infty\|x_k-y_k\|.\end{split}\]
Therefore, letting $n\rightarrow\infty$, $\|\mu_{x^{**}}-\mu_{y^{**}}\|=0$ and so  $\mu_{x^{**}}=\mu_{y^{**}}$.
\end{proof}

\begin{prop}\label{conv}
Let $X$ be a Banach space with an unconditional basis,
$T: X \to S^1$ be a bounded  linear operator such that there exists a subspace $Y$ of
$X$ isomorphic to $S^1$ and the restriction of $T$ on $Y$ is an isomorphism.
Then the set $\{\mu_{T^{**}(y^{**})}: y^{**}\in\mathcal{B}_1(Y)\}$
is a non-separable subset of $\mathcal{M}(2^\nn)$.
\end{prop}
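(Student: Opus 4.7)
The plan is to build inside $\mathcal{B}_1(Y)$ a continuum of Baire-1 elements, one per branch of a dyadic subtree, whose images under $T^{**}$ yield measures meeting the hypotheses of Proposition \ref{nonsep}. The argument proceeds by transporting the $S^1$-basis into $Y$, perturbing its image under $T$ to a block family in $S^1$, and then comparing the resulting measures via Lemma \ref{lequal}.

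First, fix an isomorphism $S:S^1\to Y$ and set $f_s=S(e_s)$, so $(f_s)_{s\in 2^{<\nn}}$ is a family in $Y$ equivalent to the basis of $S^1$; since $T|_Y$ is an isomorphism, $(T(f_s))_{s\in 2^{<\nn}}\subseteq S^1$ is also equivalent to the $S^1$-basis. In particular, for every $\sigma\in 2^\nn$ the sequence $(T(f_{\sigma|k}))_k$ is $c_0$-equivalent and hence weakly null. Choose $\dd>0$ small and apply Lemma \ref{lsubtree} to $(T(f_s))_s$ to obtain a dyadic subtree $(t_s)_s$ of $2^{<\nn}$ and a block family $(w_s)_s$ in $S^1$ with $\sum_{s\in 2^{<\nn}}\|T(f_{t_s})-w_s\|<\dd$. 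By Remark \ref{r3}, $(f_{t_s})_s$, and therefore $(T(f_{t_s}))_s$, is still equivalent to the $S^1$-basis. Splitting any linear combination of the $w_s$'s into its $T(f_{t_s})$-part plus an error controlled by $\dd$ shows that, for $\dd$ small enough, $(w_s)_s$ inherits both the antichain $\ell_1$-lower estimate and the branchwise $c_0$-upper estimate, hence satisfies hypotheses (i) and (ii) of Proposition \ref{nonsep}.

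For each $\sigma\in 2^\nn$, the sequence $(f_{t_{\sigma|k}})_k$ is $c_0$-equivalent, so $\sum_k x^*(f_{t_{\sigma|k}})$ converges absolutely for every $x^*\in X^*$; hence $f^{**}_\sigma=w^*\text{-}\lim_n\sum_{k=0}^n f_{t_{\sigma|k}}$ is a well-defined element of $Y^{**}$ belonging to $\mathcal{B}_1(Y)$. By $w^*\text{-}w^*$-continuity of $T^{**}$, $T^{**}(f^{**}_\sigma)=w^*\text{-}\lim_n\sum_{k=0}^n T(f_{t_{\sigma|k}})$. Let $w^{**}_\sigma=w^*\text{-}\lim_n\sum_{k=0}^n w_{\sigma|k}$ be the corresponding element produced by $(w_s)_s$. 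Both $(T(f_{t_{\sigma|k}}))_k$ and $(w_{\sigma|k})_k$ are $c_0$-equivalent in $S^1$, and $\sum_k\|T(f_{t_{\sigma|k}})-w_{\sigma|k}\|\leq\dd<\infty$, so Lemma \ref{lequal} gives $\mu_{T^{**}(f^{**}_\sigma)}=\mu_{w^{**}_\sigma}$.

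Finally, Proposition \ref{nonsep} applied to $(w_s)_s$ yields that $\{\mu_{w^{**}_\sigma}:\sigma\in 2^\nn\}$ is non-separable in $\mathcal{M}(2^\nn)$, and the identity $\mu_{T^{**}(f^{**}_\sigma)}=\mu_{w^{**}_\sigma}$ transfers non-separability to $\{\mu_{T^{**}(f^{**}_\sigma)}:\sigma\in 2^\nn\}\subseteq\{\mu_{T^{**}(y^{**})}:y^{**}\in\mathcal{B}_1(Y)\}$, which is what we need. The main point requiring care is the perturbation step: choosing $\dd$ small enough so that the lower antichain $\ell_1$-estimate and the upper $c_0$-branch estimate survive the passage from $(T(f_{t_s}))_s$ to $(w_s)_s$, since these are needed both for Proposition \ref{nonsep} and for the $c_0$-equivalence of $(w_{\sigma|k})_k$ demanded by Lemma \ref{lequal}.
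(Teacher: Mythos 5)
Your proof is correct and follows essentially the same route as the paper's: transport the $S^1$-basis into $Y$, pass to a dyadic subtree and a nearby block family via Lemma \ref{lsubtree}, apply Proposition \ref{nonsep} to that block family, and transfer the non-separability back through Lemma \ref{lequal}. The extra care you take with the perturbation estimates and with verifying the hypotheses of Proposition \ref{nonsep} and Lemma \ref{lequal} only makes explicit what the paper leaves implicit.
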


\begin{proof}
Since $Y$ is isomorphic to $S^1$, there is a family $(y_s)_{s\in 2^\nn}$ in $Y$
equivalent to the $S^1$-basis with $Y=[(y_s)_s]$. By Lemma \ref{lsubtree},
there exist a dyadic subtree $(t_s)_s$ of $2^{<\nn}$
and a block family $(w_s)_s$ in $S^1$, such that
\begin{equation}\label{conv1}
\sum_{s\in 2^{<\nn}}\|T(y_{t_s})-w_s\|<\dd/2.
\end{equation}
By (\ref{conv1}) we have that the family $(w_s)_s$ is equivalent to the $S^1$-basis.
For every $\sigma\in 2^\nn$, we set $y^{**}_{\sigma}=w^*-\lim_n\sum_{k=0}^n y_{t_{\sigma|k}}$ and
$w^{**}_{\sigma}=w^*-\lim_n\sum_{k=0}^n w_{\sigma|k}$. Applying Proposition \ref{nonsep}
we obtain that the set $\{\mu_{w^{**}_{\sigma}}: \sigma\in 2^\nn\}$ is a non-separable subset of $\mathcal{M}(2^\nn)$.
Since $T^{**}(y^{**}_{\sigma})=w^*-\lim_n\sum_{k=0}^n T(y_{t_{\sigma|k}})$, by (\ref{conv1}) and Lemma \ref{lequal},
we get that $\mu_{w^{**}_{\sigma}}= \mu_{T^{**}(y^{**}_{\sigma})}$, for every $\sigma\in 2^\nn$. Therefore the set
$\{\mu_{T^{**}(y^{**})}: y^{**}\in\mathcal{B}_1(Y)\}$
is non-separable.
\end{proof}

Observe that Proposition \ref{imp} and  Proposition
\ref{conv} yield Theorem \ref{t1} from the introduction.

\begin{rem}
The proof of Theorem \ref{t1} actually yields a slightly stronger result, in particular the conclusion holds if $X$ is assumed to be a subspace of a space with an unconditional basis.
\end{rem}

\begin{cor}\label{corS}
Let  $Y$ be a closed subspace of $S^1$. Then the set
$\mathcal{M}_{\mathcal{B}_1(Y)}=\{\mu_{y^{**}}: y^{**}\in\mathcal{B}_1(Y) \}$
is non-separable subset of $\mathcal{M}(2^\nn)$, if and only if, there exists a subspace $Z$ of
$Y$ isomorphic to $S^1$ and complemented in $S^1$.
 \end{cor}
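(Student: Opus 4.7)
The plan is to obtain this corollary as a direct specialization of Theorem \ref{t1} to the case $X = S^1$ and $T = \mathrm{id}_{S^1}$. Since $S^1$ carries a $1$-unconditional basis, the hypothesis on $X$ is satisfied. Moreover, the identity on $S^1$ extends to the identity on $(S^1)^{**}$, so for every $y^{**} \in \mathcal{B}_1(Y)$ we have $T^{**}(y^{**}) = y^{**}$, which gives the identification
\[
\{\mu_{T^{**}(y^{**})} : y^{**} \in \mathcal{B}_1(Y)\} = \mathcal{M}_{\mathcal{B}_1(Y)}.
\]
Hence condition (i) of Theorem \ref{t1} (applied with this choice of $X$, $Y$, $T$) coincides with the non-separability of $\mathcal{M}_{\mathcal{B}_1(Y)}$.

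For the forward direction, I would invoke the implication (i)$\Rightarrow$(iii) of Theorem \ref{t1}: non-separability of $\mathcal{M}_{\mathcal{B}_1(Y)}$ produces a subspace $Z$ of $Y$ isomorphic to $S^1$ such that $\mathrm{id}|_Z$ is an isomorphism (which is automatic) and $\mathrm{id}[Z] = Z$ is complemented in $S^1$, exactly the desired conclusion. Alternatively, one can cite Proposition \ref{imp} directly, since that proposition supplies both the copy and the complementation in one step.

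For the reverse direction, suppose $Z \subseteq Y$ is isomorphic to $S^1$. Then, taking $T = \mathrm{id}_{S^1}$, the restriction of $T$ to $Z$ is trivially an isomorphism, so condition (ii) of Theorem \ref{t1} holds (even without appealing to complementation). By the equivalence (ii)$\Rightarrow$(i), or equivalently by a direct application of Proposition \ref{conv}, the set $\mathcal{M}_{\mathcal{B}_1(Y)} = \{\mu_{T^{**}(z^{**})} : z^{**} \in \mathcal{B}_1(Z)\} \cup \cdots$ contains the non-separable subset $\{\mu_{z^{**}} : z^{**} \in \mathcal{B}_1(Z)\}$, and is therefore itself non-separable.

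There is no genuine obstacle: the whole content of the corollary is already packaged into Theorem \ref{t1}, and the only thing to check is that the identity operator makes the measures $\mu_{T^{**}(y^{**})}$ agree with $\mu_{y^{**}}$, which is immediate from $T^{**} = \mathrm{id}_{(S^1)^{**}}$. The proof is thus essentially a one-line reduction followed by citing the two halves of Theorem \ref{t1}.
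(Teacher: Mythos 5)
Your proof is correct and follows exactly the paper's route: the paper likewise applies Theorem \ref{t1} with $T=I$ the identity on $S^1$, observing that $I^{**}$ fixes $\mathcal{B}_1(Y)$ so that $\{\mu_{T^{**}(y^{**})}:y^{**}\in\mathcal{B}_1(Y)\}=\mathcal{M}_{\mathcal{B}_1(Y)}$. Nothing further is needed.
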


\begin{proof}
Let $I:  S^1 \to S^1$ the identity operator.
Then we observe that $I^{**}[\mathcal{B}_1(Y)]=\mathcal{B}_1(Y)$.
By Theorem \ref{t1} the conclusion follows.
\end{proof}

The following corollary is an immediate consequence of Corollary \ref{corS}.

\begin{cor}
Let $Y$ be a closed subspace of $S^1$ which is isomorphic to $S^1$.
Then $Y$ contains a subspace $Z$ isomorphic to $S^1$ which is complemented in $S^1$.
\end{cor}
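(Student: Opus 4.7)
The plan is to apply the previous Corollary \ref{corS} to the hypothesis subspace $Y$, so that the content reduces to verifying that the associated set of measures $\mathcal{M}_{\mathcal{B}_1(Y)}$ is non-separable in $\mathcal{M}(2^\nn)$. This non-separability will be extracted from Proposition \ref{conv} applied to the identity operator.

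More precisely, I would take $X = S^1$ and let $T = I: S^1 \to S^1$ be the identity operator. Since $Y$ is isomorphic to $S^1$ by hypothesis, the restriction $I|_Y$ is (trivially) an isomorphism onto $Y$. Hence $Y$ satisfies the assumption of Proposition \ref{conv}, and we conclude that the set
\[
\{\mu_{I^{**}(y^{**})}: y^{**}\in\mathcal{B}_1(Y)\}
\]
is non-separable in $\mathcal{M}(2^\nn)$. Since $I^{**}$ is the identity on $(S^1)^{**}$ and in particular maps $\mathcal{B}_1(Y)$ onto itself, this is exactly the set $\mathcal{M}_{\mathcal{B}_1(Y)} = \{\mu_{y^{**}}: y^{**}\in\mathcal{B}_1(Y)\}$.

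Having established non-separability, Corollary \ref{corS} applies directly and produces a subspace $Z$ of $Y$ which is isomorphic to $S^1$ and complemented in $S^1$, as desired. There is no real obstacle here: the work has already been done in Proposition \ref{conv} (which builds the non-separable family of measures by embedding a dyadic block subtree and invoking Proposition \ref{nonsep} together with Lemma \ref{lequal}), and in Corollary \ref{corS} (which is itself a specialization of Theorem \ref{t1}). The corollary is essentially a bookkeeping statement that chains these two together for the identity operator.
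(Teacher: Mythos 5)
Your proof is correct and follows essentially the same route as the paper, which derives this corollary immediately from Corollary \ref{corS}; your only addition is to make explicit that the non-separability hypothesis of Corollary \ref{corS} is supplied by Proposition \ref{conv} applied to the identity operator, which is exactly the intended reading. No gaps.
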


To state one more consequence of the above theorem we will need the
following.
\begin{lem}\label{Pel}
Let $Y$ be a subspace of $S^1$. Suppose that $S^1$
contains a complemented copy of $Y$ and that $Y$ contains a
complemented copy of $S^1$. Then $Y$ is isomorphic to $S^1$.
\end{lem}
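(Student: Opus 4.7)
The plan is to reduce to Pelczynski's decomposition method: if two Banach spaces $V,W$ each embed as complemented subspaces of one another and one of them, say $V$, satisfies $V\cong \bigl(\sum_n V\bigr)_{\ell_1}$, then $V\cong W$. Applied with $V=S^1$ and $W=Y$, the hypotheses of the lemma supply the mutual complemented embeddings, so everything reduces to the structural isomorphism
\[S^1\;\cong\;\Bigl(\sum_{n=0}^{\infty} S^1\Bigr)_{\ell_1}.\]

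To prove this structural fact, I would first establish complemented embeddings of $\bigl(\sum_n S^1\bigr)_{\ell_1}$ and $S^1$ into one another, and then invoke Pelczynski's method once more, using that $\bigl(\sum_n S^1\bigr)_{\ell_1}$ visibly satisfies the self-$\ell_1$-sum condition. The embedding $S^1\hookrightarrow \bigl(\sum_n S^1\bigr)_{\ell_1}$ as the first summand is trivially complemented. For the reverse embedding, I would decompose the dyadic tree as $2^{<\nn}=C\sqcup \bigsqcup_{n\geq 0} T_n$, where $C=\{1^n:n\geq 0\}$ is the rightmost branch and $T_n=\{s\in 2^{<\nn}:1^n\con 0\sqsubseteq s\}$ is the regular dyadic subtree hanging off $1^n\con 0$. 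A direct combinatorial check shows that elements of distinct $T_n$, $T_m$ are $\sqsubseteq$-incomparable (since $1^n\con 0$ and $1^m\con 0$ disagree at coordinate $\min(n,m)+1$), so every antichain $A\subseteq \bigcup_n T_n$ decomposes as $A=\bigsqcup_n (A\cap T_n)$, each piece being an antichain in the corresponding $T_n$. Since each $T_n$ is a regular dyadic subtree, $[\{e_s:s\in T_n\}]$ is isometric to $S^1$, and for every $x\in c_{00}(2^{<\nn})$ supported in $\bigcup_n T_n$,
\[\|x\|_{S^1}\;=\;\sum_{n=0}^{\infty}\|x|T_n\|_{S^1(T_n)}.\]
Consequently the closed span $E$ of $\{e_s:s\notin C\}$ is isometric to $\bigl(\sum_n S^1\bigr)_{\ell_1}$; as the basis of $S^1$ is $1$-unconditional, the coordinate projection killing the branch functionals $(e^*_{1^n})_{n\geq 0}$ is a norm-one projection onto $E$.

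Once $S^1\cong \bigl(\sum_n S^1\bigr)_{\ell_1}$ is in hand, a final application of Pelczynski's decomposition method (with $V=S^1$, $W=Y$, and the mutual complemented embeddings supplied by the hypotheses) gives $Y\cong S^1$. The only non-routine point is the antichain/norm decomposition above, which rests entirely on the combinatorics of the $T_n$'s; the remainder is a double invocation of Pelczynski's method.
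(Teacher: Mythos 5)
Your proof is correct and follows essentially the same route as the paper: both reduce the lemma to the structural isomorphism $S^1\cong\bigl(\sum_n S^1\bigr)_{\ell_1}$, obtained from the decomposition of $2^{<\nn}$ into the rightmost branch (spanning $c_0$) and the subtrees hanging off it (spanning an $\ell_1$-sum of isometric copies of $S^1$), and then invoke the Pelczynski decomposition method. The only cosmetic difference is that the paper establishes $S^1\cong\bigl(\sum_n S^1\bigr)_{\ell_1}$ by an explicit chain of isomorphisms starting from $S^1\approx c_0\oplus\bigl(\sum_n S^1\bigr)_{\ell_1}$, whereas you get it from a second application of Pelczynski's method using only the mutual complemented embeddings.
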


\begin{proof}
Notice that $S^1\approx c_0\oplus(S^1\oplus S^1\oplus...)_{\ell_1}\approx
c_0\oplus(S^1\oplus S^1\oplus...)_{\ell_1}\oplus (S^1\oplus
S^1\oplus...)_{\ell_1}\approx S^1\oplus(S^1\oplus S^1\oplus...)_{\ell_1}\approx
(S^1\oplus S^1\oplus...)_{\ell_1}$. The
result now follows by applying the Pelczynski decomposition method
\cite{LT}.
\end{proof}

\begin{cor}\label{c28}
Let $Y$ be a complemented subspace of $S^1$ such that the set
$\{\mu_{y^{**}}: y^{**}\in\mathcal{B}_1(Y) \}$ is a non-separable subset of $\mathcal{M}(2^\nn)$.
Then $Y$ is isomorphic to $S^1$.
\end{cor}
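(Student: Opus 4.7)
The plan is to combine the preceding Corollary (which gives complemented copies of $S^1$ inside $Y$ from non-separability of the measure set) with the Pe{\l}czy{\'n}ski decomposition statement in Lemma \ref{Pel}. Since the hypothesis is exactly that on the set $\{\mu_{y^{**}}:y^{**}\in \mathcal{B}_1(Y)\}$, I would first apply Corollary \ref{corS} directly to $Y$. This yields a subspace $Z$ of $Y$ which is isomorphic to $S^1$ and complemented in $S^1$.

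Next, I would upgrade this to the statement that $Z$ is complemented in $Y$. Let $P_Y:S^1\to Y$ be a bounded projection, which exists by hypothesis, and let $P_Z:S^1\to Z$ be a bounded projection onto $Z$ coming from the previous step. Since $Z\subseteq Y$, the restriction $P_Z|_Y:Y\to Z$ is a bounded linear map; it fixes $Z$ pointwise (because $P_Z$ does), hence it is a projection from $Y$ onto $Z$. Thus $Y$ contains a complemented copy of $S^1$.

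Finally, $Y$ itself is by hypothesis complemented in $S^1$, so $S^1$ contains a complemented copy of $Y$. With both inclusions complemented and the identification $S^1\approx(S^1\oplus S^1\oplus\cdots)_{\ell_1}$ recorded in Lemma \ref{Pel}, the Pe{\l}czy{\'n}ski decomposition method applies and yields $Y\approx S^1$. There is essentially no obstacle in this argument: all the substantive work sits in Theorem \ref{t1} and Corollary \ref{corS}, and the remaining step is a routine two-line verification that a projection onto a subspace of a complemented subspace restricts to a projection of that complemented subspace, followed by a direct invocation of Lemma \ref{Pel}.
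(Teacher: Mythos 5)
Your proposal is correct and follows essentially the same route as the paper: apply Corollary \ref{corS} to obtain a subspace $Z$ of $Y$ isomorphic to $S^1$ and complemented in $S^1$, observe that $Z$ is then complemented in $Y$, and conclude via the Pe{\l}czy{\'n}ski decomposition in Lemma \ref{Pel}. The only difference is that you spell out the (routine) restriction-of-projection step that the paper leaves implicit.
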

\begin{proof} Since $Y$ is a subspace of $S^1$ and $\{\mu_{y^{**}}: y^{**}\in\mathcal{B}_1(Y) \}$ is non-separable,
by Corollary \ref{corS} we have that $Y$
contains
a complemented copy of $S^1$. Since $Y$ is complemented in $S^1$, by
Lemma \ref{Pel} we have that $Y$ is isomorphic to $S^1$.
\end{proof}

\begin{rem}\label{Ro29}
It follows from a standard argument using Rosenthal's lemma \cite{Ro}, that whenever $(X_i)_i$ is a sequence of Banach spaces and $X = (\sum_{i=1}^\infty\oplus X_i)_{\ell_1}$ then $\mathcal{B}_1(X) = (\sum_{i=1}^\infty\oplus \mathcal{B}_1(X_i))_{\ell_1}$. More precisely, for every $x^{**}\in\mathcal{B}_1(X)$, there exists a unique sequence $(x_i^{**})_i$ with $x_i^{**}\in \mathcal{B}_1(X_i)$ for all $i\in\nn$ and $\sum_i\|x_i^{**}\| < \infty$, so that $x^{**} = \sum_{i=1}^\infty x_i^{**}$.
\end{rem}

\begin{cor}\label{Cor31}
Suppose $S^1$ is isomorphic to an $\ell_1$ sum of a sequence of Banach spaces $X_i$.
Then there is an $j$ such that $X_j$ is isomorphic of $S^1$.
\end{cor}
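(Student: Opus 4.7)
The plan is to argue by contradiction, using Corollary \ref{c28} to translate the question of isomorphism to $S^1$ into a question about non-separability of measure sets. Let $U \colon (\sum_i X_i)_{\ell_1} \to S^1$ be the given isomorphism and set $\tilde{X}_i = U(X_i)$, so each $\tilde{X}_i$ is complemented in $S^1$. Suppose no $X_j$ is isomorphic to $S^1$; then by Corollary \ref{c28} each $\mathcal{M}_{\mathcal{B}_1(\tilde{X}_i)}$ is separable. On the other hand, Corollary \ref{corS} applied to $Y = S^1$ (with $Z = S^1$ itself) says that $\mathcal{M}_{\mathcal{B}_1(S^1)}$ is non-separable. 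I will derive a contradiction by showing that separability of all the $\mathcal{M}_{\mathcal{B}_1(\tilde{X}_i)}$ forces separability of $\mathcal{M}_{\mathcal{B}_1(S^1)}$.

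The main step is the measure-level identity
\[
\mu_{y^{**}} \;=\; \sum_{i=1}^{\infty}\mu_{y_i^{**}}
\]
for every $y^{**} \in \mathcal{B}_1(S^1)$, where $y^{**} = \sum_i y_i^{**}$ is the unique decomposition from Remark \ref{Ro29}, with $y_i^{**} \in \mathcal{B}_1(\tilde{X}_i)$ and $\sum_i \|y_i^{**}\| < \infty$. The inequality $\mu_{y^{**}} \leq \sum_i \mu_{y_i^{**}}$ follows by iterating Proposition \ref{basprop}(ii) on finite partial sums and passing to the limit via the $1$-Lipschitz estimate of Proposition \ref{basprop}(iv) applied to $y^{**} - \sum_{i \leq N} y_i^{**}$, whose norm equals $\sum_{i > N}\|y_i^{**}\| \to 0$. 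For equality of total masses, Proposition \ref{basprop}(iii) gives $\mu_{y^{**}}(2^\nn) = d(y^{**}, S^1)$, and the $\ell_1$-structure $\mathcal{B}_1(S^1) = (\sum_i \mathcal{B}_1(\tilde{X}_i))_{\ell_1}$ furnished by Remark \ref{Ro29} lets me compute a best $S^1$-approximation of $y^{**}$ componentwise, yielding $d(y^{**}, S^1) = \sum_i d(y_i^{**}, \tilde{X}_i) = \sum_i \mu_{y_i^{**}}(2^\nn)$. A positive measure dominated by another positive measure and agreeing with it in total mass must coincide with it, so the identity follows.

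Given the identity, the contradiction is routine. Choose a countable norm-dense set $D_i \subseteq \mathcal{M}_{\mathcal{B}_1(\tilde{X}_i)}$ for each $i$ and let $\mathcal{D}$ be the countable family of finite sums $\sum_{i \leq N}\nu_i$ with $\nu_i \in D_i$. Given $\mu_{y^{**}} = \sum_i \mu_{y_i^{**}}$ and $\ee > 0$, take $N$ so large that $\sum_{i > N}\|\mu_{y_i^{**}}\| < \ee/2$ and pick $\nu_i \in D_i$ with $\|\mu_{y_i^{**}} - \nu_i\| < \ee/(2N)$ for $i \leq N$; the triangle inequality gives $\|\mu_{y^{**}} - \sum_{i \leq N}\nu_i\| < \ee$, so $\mathcal{D}$ is norm-dense in $\mathcal{M}_{\mathcal{B}_1(S^1)}$, contradicting its non-separability. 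The technical heart of the argument is the sum identity for $\mu_{y^{**}}$, and within that I expect the equality $d(y^{**}, S^1) = \sum_i d(y_i^{**}, \tilde{X}_i)$ to be the most delicate point, since it rests squarely on the $\ell_1$-decomposition of the Baire-1 space of the direct sum provided by Remark \ref{Ro29}.
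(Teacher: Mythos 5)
Your overall strategy (argue by contradiction, use Corollary \ref{c28} and Remark \ref{Ro29}, and contradict the non-separability of $\mathcal{M}_{\mathcal{B}_1(S^1)}$ coming from Corollary \ref{corS}) is the same as the paper's, but the technical heart of your argument --- the identity $\mu_{y^{**}}=\sum_i\mu_{y_i^{**}}$ --- is not justified, and you correctly flagged the weak spot yourself. The total-mass computation fails for two reasons. First, Proposition \ref{basprop}(iii) gives $\mu_{y_i^{**}}(2^\nn)=d(y_i^{**},S^1)$, the distance to \emph{all} of $S^1$, whereas your chain of equalities uses $d(y_i^{**},\tilde X_i)$; these can differ ($d(y_i^{**},S^1)\le d(y_i^{**},\tilde X_i)$, possibly strictly), and only the inequality $d(y^{**},S^1)\le\sum_i d(y_i^{**},S^1)$ is available. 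Second, the decomposition $S^1\approx(\sum_i\oplus \tilde X_i)_{\ell_1}$ is only an isomorphism, so the $S^1$-norm is merely equivalent, not equal, to the $\ell_1$-sum norm; a best approximation of $y^{**}$ cannot be computed componentwise in the $S^1$-norm, so even $d(y^{**},S^1)=\sum_i d(y_i^{**},\tilde X_i)$ is unsupported. In general the measures need not add: the subadditivity of Proposition \ref{basprop}(ii) can be strict when the coefficients $y_i^{**}(e_s^*)$ of different components partially cancel along the tree. Since your concluding density argument (the estimate $\|\mu_{y^{**}}-\sum_{i\le N}\nu_i\|<\ee$) uses the identity essentially, it does not survive if only the inequality $\mu_{y^{**}}\le\sum_i\mu_{y_i^{**}}$ is known.

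The repair is short and is exactly what the paper does: absolute continuity suffices, equality is not needed. From separability of each $\{\mu_{x^{**}}:x^{**}\in\mathcal{B}_1(X_i)\}$ choose $\mu_i\in\mathcal{M}^+(2^\nn)$ with $\mu_{x^{**}}\ll\mu_i$ for all $x^{**}\in\mathcal{B}_1(X_i)$ (sum a suitably weighted countable dense family and use Proposition \ref{basprop}(iv) to pass to limits), set $\mu=\sum_i\mu_i/(2^i(1+\|\mu_i\|))$, and use your correctly established inequality $\mu_{y^{**}}\le\sum_i\mu_{y_i^{**}}$ to conclude $\mu_{y^{**}}\ll\mu$ for every $y^{**}\in\mathcal{B}_1(S^1)$. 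By the Radon--Nikodym theorem the whole family $\{\mu_{y^{**}}:y^{**}\in\mathcal{B}_1(S^1)\}$ then embeds isometrically into the separable space $L^1(2^\nn,\mu)$, contradicting its non-separability.
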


\begin{proof}
Let $X=(\sum_{i=1}^\infty\oplus X_i)_{\ell_1}$ and assume that $X$ is isomorphic to $S^1$. If we identify $X$ with $S^1$, by Corollary \ref{c28},
it is enough to prove that there is an $j$ such that the set
$\{\mu_{x^{**}}: x^{**}\in\mathcal{B}_1(X_j) \}$ is a non-separable subset of $\mathcal{M}(2^\nn)$.
Assume that the claim is not true. Then for every $i$, the set $\{\mu_{x^{**}}: x^{**}\in\mathcal{B}_1(X_i) \}$
is separable. Therefore for every $i$ there is $\mu_i\in\mathcal{M}^+(2^\nn) $ such that $\mu_{x^{**}}<<\mu_i$,
for all $x^{**}\in\mathcal{B}_1(X_i)$. We set $\mu=\sum_{i=1}^\infty\frac{\mu_i}{2^i(1+\|\mu_i\|)}$.
By Proposition \ref{basprop} (ii) and (iv), we obtain that $\mu_{x^{**}}<<\mu$, for every $x^{**}$ in $(\sum_i\oplus\mathcal{B}_1(X_i))_{\ell_1}$, which by Remark is $\mathcal{B}_1(S^1)$.
Hence the set $\{\mu_{x^{**}}: x^{**}\in\mathcal{B}_1(S^1)\}$ is separable, a contradiction.
\end{proof}

\begin{rem}
In \cite{BO2} H. Bang and E. Odell showed that $S^1$ is primary, that is whenever $S^1 = X\oplus Y$, then either $X$ or $Y$ is isomorphic to $S^1$. We note that this fact also follows from Corollary \ref{Cor31}.
\end{rem}

The following is an immediate consequence of the main result from \cite{BV}, where actually a stronger statement is proven.

\begin{prop}\label{den}
Let D be a subset of $2^{<\nn}$ such that
\[\limsup_{n\rightarrow\infty}\frac{\#(D\cap \{s\in 2^{<\nn}: |s|=n\} )}{2^n}>0.\]
Then there is a regular dyadic subtree $(t_s)_s$ of $2^{<\nn}$ which is contained in D.
\end{prop}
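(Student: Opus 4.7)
Plan. The result follows from the main theorem of \cite{BV}, and the essential combinatorial idea can be outlined as follows. Set $\delta := \limsup_n \#(D\cap 2^n)/2^n > 0$, and for $s\in 2^{<\nn}$ and $l\geq |s|$ introduce the conditional density
\[
d_l(s) := 2^{|s|-l}\cdot\#\{t\in D : s\sqsubseteq t,\ |t|=l\},\qquad f(s):=\limsup_l d_l(s).
\]
Then $f(\varnothing)=\delta>0$, and the averaging identity $d_l(s)=\tfrac{1}{2}\bigl(d_l(s^\smallfrown 0)+d_l(s^\smallfrown 1)\bigr)$ combined with the subadditivity of $\limsup$ yields $\max\{f(s^\smallfrown 0),f(s^\smallfrown 1)\}\geq f(s)$. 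Hence the subtree $T:=\{s\in 2^{<\nn} : f(s)>0\}$ contains $\varnothing$, every node of $T$ has at least one child in $T$, and in particular $T$ is infinite.

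The plan is to construct the regular dyadic subtree $(t_s)_{s\in 2^{<\nn}}$ together with levels $m_0<m_1<\cdots$ by induction on $|s|$, maintaining the invariant that $t_s\in D\cap T\cap 2^{m_{|s|}}$ and that the $\sqsubseteq$-order and the lexicographical order on $(t_s)$ match those on $2^{<\nn}$. For the inductive step, at each leaf $t_s$ (with $s\in 2^n$ and $f(t_s)>0$) one iterates the averaging identity inside the subtree rooted at $t_s$ to exhibit two incomparable descendants of $t_s$ lying in $T$; descending each such descendant a bit further until one lands in $D$ yields, for every $s\in 2^n$, a pair of incomparable descendants $(u_s,v_s)\subseteq D\cap T$ of $t_s$, possibly at different levels.

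The main obstacle is the synchronization step: one must replace each $u_s,v_s$ by a further descendant in $D\cap T$ at a \emph{common} level $m_{n+1}$. For a single node the set of levels at which it admits a $D\cap T$-descendant is only a priori infinite, so the naive intersection over the $2^{n+1}$ current leaves could fail to be infinite. Resolving this is exactly where the global density hypothesis (namely $\#D_n/2^n>\delta'$ for infinitely many $n$) is indispensable: at each such ``good'' level $l$, a pigeonhole over the finite collection of leaves shows that for a positive fraction of $l$'s all of them admit $D\cap T$-descendants at level $l$ simultaneously, and a careful iteration of this coordinated pigeonhole produces the common level $m_{n+1}$, which completes the induction. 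This coordinated pigeonhole is precisely what is established in the more general statement of \cite{BV}, from which the present proposition then follows at once.
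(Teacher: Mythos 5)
The paper gives no proof of this proposition at all --- it is stated as an immediate consequence of the main result of \cite{BV} --- and your proposal ultimately rests on precisely the same citation for the genuinely hard step, so the two coincide in substance. Your supplementary outline is a reasonable heuristic (the averaging identity, the fact that $T=\{s: f(s)>0\}$ must branch since otherwise $f$ would double along a chain and exceed $1$), but as a standalone argument it would still need justification for the step where you descend from a node $u$ with $f(u)>0$ to a descendant lying in $D$ that \emph{also} has positive $f$: this is not automatic, and establishing it (together with the level synchronization you do flag) requires a further coordinated density argument of the kind carried out in \cite{BV}.
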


The next proposition can be found in \cite[Proposition 2.2, page 161]{BR}.

\begin{prop}\label{G}
Let $X$ and $Y$ be Banach spaces and $T: X \to Y$ be a $G_\delta$-embedding.
If  $X$ is isomorphic to $c_0$, T is an isomorphism.
\end{prop}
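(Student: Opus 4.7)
I will argue by contradiction: assume $T$ is not an isomorphism. Since $T$ is continuous and injective, this means $T$ is not bounded below, so there exists a normalized sequence $(x_n)$ in $X$ with $\|Tx_n\|\to 0$. Identifying $X$ with $c_0$, a standard Bessaga--Pelczynski selection together with the fact that every semi-normalized block basis of the $c_0$-basis is equivalent to the $c_0$-basis lets me pass to a subsequence so that $(x_n)$ is equivalent to the unit vector basis of $c_0$, with constants $\kappa,M>0$ satisfying $\kappa\sup_i|a_i|\leq\|\sum_i a_i x_i\|\leq M\sup_i|a_i|$ for every finitely supported scalar sequence $(a_i)$. A further subsequencing (possible since $\|Tx_n\|\to 0$, e.g.\ by enforcing $\|Tx_{n+1}\|\leq\tfrac14\|Tx_n\|$) secures the key geometric inequality $\|Tx_n\|>2\sum_{m>n}\|Tx_m\|$ for every $n$.

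Next, define $\phi:2^{\nn}\to Y$ by $\phi(\alpha)=\sum_{n:\alpha(n)=1}Tx_n$. Absolute summability of $(\|Tx_n\|)$ makes this series converge uniformly in $\alpha$, so $\phi$ is continuous. To check injectivity, given $\alpha\neq\beta$ in $2^{\nn}$ let $n_0$ be the least index at which they differ; then
\[\|\phi(\alpha)-\phi(\beta)\|\geq\|Tx_{n_0}\|-\sum_{m>n_0}\|Tx_m\|>\tfrac{1}{2}\|Tx_{n_0}\|>0.\]
Since $2^{\nn}$ is compact and $Y$ is Hausdorff, $\phi$ is thus a homeomorphism onto the compact Cantor-type set $C:=\phi(2^{\nn})\subset Y$.

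Consider the set $K=\{\sum_{n\in F}x_n:\;F\subset\nn\text{ finite}\}\subset X$. By the $c_0$-basis equivalence of $(x_n)$, $K$ is bounded; and any two distinct elements of $K$ differ by a nonzero $\pm 1$-combination of $(x_n)$, hence lie at distance at least $\kappa$ apart, so $K$ is closed in $X$. The $G_\delta$-embedding hypothesis then forces $T[K]$ to be a $G_\delta$ subset of $Y$. On the other hand $T[K]=\phi(D)$, where $D=\{\chi_F:F\subset\nn\text{ finite}\}$ is a countable dense subset of $2^{\nn}$; since $C$ is closed in $Y$, $T[K]$ is $G_\delta$ in $C$ as well, and transporting back through the homeomorphism $\phi$ we would need $D$ to be a $G_\delta$ subset of $2^{\nn}$. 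This violates the Baire category theorem: writing $D=\bigcap_n U_n$ with $U_n$ open (and necessarily dense since $D$ is), $2^{\nn}$ decomposes as the countable union of the closed nowhere-dense sets $(2^{\nn}\setminus U_n)_n$ together with the singletons $\{\chi_F\}_{F\text{ finite}}$ (each nowhere dense in the perfect space $2^{\nn}$), which is impossible. The main subtle point is securing injectivity of $\phi$, which is accomplished by the inequality $\|Tx_n\|>2\sum_{m>n}\|Tx_m\|$ and sidesteps any need for a basic-sequence extraction from $(Tx_n)$.
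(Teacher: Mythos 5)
The paper offers no proof of this proposition at all: it is quoted from Bourgain and Rosenthal \cite{BR}, and your argument is essentially the one from that source. Everything from the inequality $\|Tx_n\|>2\sum_{m>n}\|Tx_m\|$ onwards is correct: the map $\phi$ is a homeomorphism of $2^{\nn}$ onto a Cantor set $C\subseteq Y$, the set $K$ is closed and bounded so $T[K]$ must be $G_\delta$, $T[K]=\phi(D)$ with $D$ the countable dense set of finitely supported sequences, and a countable dense subset of a perfect compact metric space cannot be $G_\delta$ by Baire category. That part needs no changes.

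There is, however, a genuine gap at the very first step. From the failure of lower boundedness you only obtain an arbitrary normalized sequence $(x_n)$ with $\|Tx_n\|\to 0$, and such a sequence need not have any subsequence equivalent to the $c_0$-basis; the Bessaga--Pelczynski selection you invoke requires the coordinates $e_i^*(x_n)$ to tend to $0$, which you never verify and which can fail. Concretely, let $T:c_0\to c_0$ be given by $(Ta)_n=2^{-n}(a_n-a_{n+1})$. This operator is bounded and injective, and the normalized vectors $x_n=\sum_{i=1}^n e_i$ satisfy $Tx_n=2^{-n}e_n$, so $\|Tx_n\|\to 0$; yet every subsequence of $(x_n)$ is equivalent to the summing basis, hence is not weakly null and is equivalent to no unconditional basic sequence, in particular not to the $c_0$-basis. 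The repair is standard: $(x_n)$ has no norm-convergent subsequence (a norm limit $x$ would satisfy $\|x\|=1$ and $Tx=0$, contradicting injectivity), so after passing to a weakly Cauchy subsequence (Rosenthal's $\ell_1$-theorem, since $c_0$ contains no copy of $\ell_1$) one can choose $\ee>0$ and indices $k_n<l_n<k_{n+1}$ with $\|x_{l_n}-x_{k_n}\|\geq\ee$; the normalized differences $z_n=(x_{l_n}-x_{k_n})/\|x_{l_n}-x_{k_n}\|$ are weakly null, still satisfy $\|Tz_n\|\to 0$, and to these Bessaga--Pelczynski does apply, yielding a subsequence equivalent to a normalized block basis of $c_0$ and hence to the $c_0$-basis. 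With $(z_n)$ in place of $(x_n)$, the remainder of your proof goes through verbatim.
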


\begin{thm}\label{TG}
Let $X$ be a Banach space. If $S^1$ $G_\delta$-embeds in $X$ and $X$ $G_\delta$-embeds in $S^1$,
then $S^1$ complementably embeds in $X$.
\end{thm}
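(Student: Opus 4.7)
The plan is to apply Theorem~\ref{t1} to the composition $V := T \circ U: S^1 \to S^1$, which is itself a $G_\delta$-embedding. Since Theorem~\ref{t1} applies in particular with $X = S^1$ (as observed immediately after its statement), it suffices to show that $\mathcal{M}_V := \{\mu_{V^{**}(y^{**})}: y^{**} \in \mathcal{B}_1(S^1)\}$ is non-separable in $\mathcal{M}(2^\nn)$. Granting this, Theorem~\ref{t1}(iii) yields a subspace $Z$ of $S^1$ isomorphic to $S^1$ on which $V$ is an isomorphism and with $V[Z]$ complemented in $S^1$. Setting $W := U[Z]$, the identity $V|_Z = T \circ U|_Z$ forces $U|_Z$ to be an isomorphism, so $W$ is isomorphic to $S^1$; since $T[W] = V[Z]$ is complemented in $S^1$ via some projection $P$, the map $Q := (T|_W)^{-1} \circ P \circ T$ is a bounded projection from $X$ onto $W$, so $S^1$ complementably embeds in $X$.

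To produce non-separability of $\mathcal{M}_V$, I would first use Proposition~\ref{G}: for every $\sigma \in 2^\nn$ the space $[e_{\sigma|k}: k \geq 0]$ is isomorphic to $c_0$, so $V$ restricted to it is an isomorphism, and in particular $\inf_k \|V(e_{\sigma|k})\| > 0$. The closed sets $E_n = \{\sigma: \|V(e_{\sigma|k})\| \geq 1/n \text{ for all } k\}$ are increasing and cover $2^\nn$; by the Baire category theorem there are $\tau \in 2^{<\nn}$ and $n_0$ with $V_\tau \subseteq E_{n_0}$, so $\|V(e_s)\| \geq 1/n_0$ for every $s \sqsupseteq \tau$. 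The set $D = \{s: \|V(e_s)\| \geq 1/n_0\}$ then has positive upper density at each level, and Proposition~\ref{den} produces a regular dyadic subtree $(t_s)_{s \in 2^{<\nn}} \subseteq D$. For each $\sigma$ the sequence $(V(e_{t_{\sigma|k}}))_k$ is a subsequence of the $c_0$-equivalent branch $(V(e_{\nu|m}))_m$ corresponding to the chain $(t_{\sigma|k})_k$, so it is $c_0$-equivalent and weakly null. By Lemma~\ref{lsubtree} we then pass to a further dyadic subtree $(r_s)$ of $2^{<\nn}$ and a block family $(w_s)$ in $S^1$ with $\sum_s \|V(e_{t_{r_s}}) - w_s\| < \dd$ arbitrarily small.

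The next task is to verify the hypotheses of Proposition~\ref{nonsep} for $(w_s)$. Condition (ii), that each branch $(w_{\sigma|k})_k$ is $c_0$-equivalent, follows from norm-closeness to the $c_0$-equivalent family $(V(e_{t_{r_{\sigma|k}}}))_k$; condition (i), the uniform $\ell_1$-lower bound $\|\sum_{|s|=n}\lambda_s w_s\|_{S^1} \geq \rho \sum_{|s|=n} |\lambda_s|$, is the crucial point. Once (i) is in hand, Proposition~\ref{nonsep} gives the non-separability of $\{\mu_{w^{**}_\sigma}: \sigma \in 2^\nn\}$, and by Lemma~\ref{lequal} each $\mu_{w^{**}_\sigma}$ equals $\mu_{V^{**}(z^{**}_\sigma)}$, where $z^{**}_\sigma = w^*\text{-}\lim_n \sum_{k=0}^n e_{t_{r_{\sigma|k}}} \in \mathcal{B}_1(S^1)$; hence $\mathcal{M}_V$ is non-separable, completing the reduction.

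The main obstacle is the verification of condition (i). Here one must exploit the $G_\delta$-embedding property of $V$ beyond Proposition~\ref{G}. A key general fact is that a $G_\delta$-embedding $V: S^1 \to S^1$ carries $\mathcal{B}_1(S^1) \setminus S^1$ into $(S^1)^{**} \setminus S^1$: if $V^{**}(y^{**}) \in S^1$, Mazur-style convex combinations of a weak-star approximating sequence in $S^1$ produce an image converging in norm to $V^{**}(y^{**})$, and the $G_\delta$-property extracts a norm-convergent subsequence of the sources whose limit (being simultaneously the weak-star limit) forces $y^{**} \in S^1$. The verification of (i) proceeds by contradiction: a failure produces normalized antichain sums $u_k = \sum_{|s|=n_k} \lambda_s^k e_{t_{r_s}}$ in $S^1$ with $\|V(u_k)\| \to 0$, to which Rosenthal's $\ell_1$-theorem applies; one then rules out both the $\ell_1$-case (via a further subtree refinement forcing the supports of the block approximants to be antichain-wise separated, which yields (i) outright) and the weakly Cauchy case (via the above property of $V^{**}$ combined with the placement of $u_k$ at arbitrarily deep levels of the subtree).
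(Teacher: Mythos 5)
Your overall strategy coincides with the paper's: compose the two $G_\delta$-embeddings into an operator $S^1\to S^1$, extract a dyadic subtree and a block perturbation via Lemma \ref{lsubtree}, feed it into Proposition \ref{nonsep} together with Lemma \ref{lequal} to get non-separability of the associated measures, and invoke Theorem \ref{t1}; your derivation of the complemented copy of $S^1$ inside $X$ from the conclusion of Theorem \ref{t1} is correct and is actually spelled out more carefully than in the paper. The problem is that you correctly isolate condition (i) of Proposition \ref{nonsep} as ``the crucial point'' and then do not prove it. Your Baire-category argument only yields the pointwise lower bound $\|V(e_s)\|\geq 1/n_0$ for all $s$ extending some node $\tau$, and in $S^1$ disjointly supported blocks of norm $\geq 1/n_0$ satisfy no lower $\ell_1$-estimate whatsoever (successive blocks along a single branch span $c_0$). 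The paper closes exactly this gap with a different device: it tests $T$ on the normalized level averages $x_n=2^{-n}\sum_{|s|=n}e_s$, which span an isometric copy of $c_0$, so that Proposition \ref{G} gives $\|T(x_n)\|\geq\rho$; choosing a \emph{single} norm-one functional $x_n^*$ norming $T(x_n)$, a counting argument shows that the set $A_n=\{s: |s|=n,\ x_n^*(T(e_s))\geq\rho/2\}$ has density $\geq \rho/(2\|T\|-\rho)$ at level $n$, and Proposition \ref{den} (Bicker--Voigt) extracts a regular dyadic subtree inside $\bigcup_n A_n$. It is the common norming functional $x_n^*$, acting simultaneously on all the disjointly supported $T(e_{t_s})$ at a given level, that converts the individual lower bounds into the uniform level-wise $\ell_1$-estimate required by Proposition \ref{nonsep}(i). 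Nothing in your argument produces such a common witness.

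Your proposed repair via Rosenthal's dichotomy does not work as sketched. In the ``$\ell_1$ case'' you appeal to ``a further subtree refinement forcing the supports of the block approximants to be antichain-wise separated''; no such refinement is available, since the supports of successive blocks in the natural ordering of $2^{<\nn}$ can perfectly well be pairwise comparable, and a $G_\delta$-embedding can send a normalized $\ell_1$-sequence to a norm-null sequence (the formal identity $\ell_1\to\ell_2$ is a semi-embedding doing exactly this on normalized averages), so the $\ell_1$ alternative cannot be excluded by general properties of $V$. In the weakly Cauchy case, your ``key general fact'' rests on the claim that the $G_\delta$-property lets one ``extract a norm-convergent subsequence of the sources'' from norm convergence of the images; that is not a property of $G_\delta$- (or semi-) embeddings --- closedness or $G_\delta$-ness of $V[K]$ identifies the limit as lying in $V[K]$ but gives no convergence of preimages. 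A further, more minor, point: you assert without proof that the composition of two $G_\delta$-embeddings is a $G_\delta$-embedding; the image of a closed bounded set under the first map need not be closed, so this needs justification. It is also unnecessary: all you use is Proposition \ref{G} on $c_0$-copies, which you get by applying it first to $W$ and then to the restriction of $R$ to the closed subspace $W[C]$.
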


\begin{proof}
Let $W: S^1 \to X$, $R: X \to S^1$ be $G_\delta$-embeddings and $T=RW: S^1 \to S^1$.
By Lemma \ref{lsubtree}, passing to the dyadic subtree, we may assume that
$(T(e_s))_s$ is a block family in $S^1$. For every $n\in\nn$, we set
$x_n=\sum_{|s|=n}\frac{1}{2^n}e_s$. Notice that the space $[(x_n)_{n\in\nn}]$ is
isometric to $c_0$. Then by Proposition \ref{G}, the restriction of $T$ on $[(x_n)_{n\in\nn}]$  is an isomorphism.
Hence there is $\rho>0$ with $2\|T\|>\rho$, such that $\|T(x_n)\|\geq\rho$, for all $n\in\nn$. For each $n$, we choose a norm one
functional $x^*_n\in (S^1)^*$ such that $x^*_n(T(x_n))=\|T(x_n)\|$ and we set
\[A_n=\{s\in 2^{<\nn}: x^*_n(T(e_s))\geq\rho/2,\,\, |s|=n\}\,\,\text{and}\,\,D=\cup_{n\in\nn}A_n.\]
Then we have that
\[\begin{split}
\rho\leq x^*_n(T(x_n))=\frac{1}{2^n}\sum_{|s|=n}x^*_n(T(e_s))&=\frac{1}{2^n}\sum_{s\in A_n}x^*_n(T(e_s))+
\frac{1}{2^n}\sum_{s\in A_n^c}x^*_n(T(e_s))\\&\leq\frac{1}{2^n}\bigg(\#A_n\|T\|+(2^n-\#A_n)\frac{\rho}{2}\bigg).
\end{split}\]
By the above inequality we get that $\frac{\#A_n}{2^n}\geq\frac{\rho}{2\|T\|-\rho}$. Hence,
\begin{equation}\label{TG1}
\frac{\#(D\cap \{s\in 2^{<\nn}: |s|=n\} )}{2^n}=\frac{\#A_n}{2^n}\geq\frac{\rho}{2\|T\|-\rho}>0,\,\,\text{for all}\,\, n\in\nn.
\end{equation}
By (\ref{TG1}) and Proposition \ref{den}, there is a regular dyadic subtree $(t_s)_s$ of $2^{<\nn}$ which is contained in D.
Since the family $(T(e_s))_s$ is block, we easily observe that for every $n\geq 0$ and family $(\lambda_s)_{|s|=n}$ of scalars,
$\|\sum_{|s|=n}\lambda_s T(e_{t_s})\|\geq\rho/2\sum_{|s|=n}|\lambda_s|.$
Therefore, the assumptions of Proposition \ref{nonsep} are fulfilled and so the set
$\{\mu_{T^{**}(\sigma^{**})}: \sigma\in 2^\nn\}$ is non-separable subset of $\mathcal{M}(2^\nn)$, where
for every $\sigma\in 2^\nn$,  $\sigma^{**}=w^*-\lim_n\sum_{k=0}^n e_{t_{\sigma|k}}$. Now by Theorem \ref{t1} the conclusion follows.
\end{proof}

As a consequence of Theorem \ref{TG}, we obtain the following
\begin{cor}
Let $X$ be a closed subspace of $S^1$. If $S^1$ $G_\delta$-embeds in $X$,
then $S^1$ complementably embeds in $X$.
\end{cor}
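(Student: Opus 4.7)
The plan is to deduce the corollary directly from Theorem \ref{TG} by exhibiting a $G_\delta$-embedding of $X$ into $S^1$. The natural candidate is of course the inclusion map $i: X \hookrightarrow S^1$, which is automatically an injective bounded linear operator.

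The only thing to check is that $i$ is a $G_\delta$-embedding, i.e.\ that $i[K]$ is a $G_\delta$ subset of $S^1$ for every closed bounded $K\subseteq X$. Since $X$ is closed in $S^1$ and $K$ is closed in $X$, $i[K]=K$ is closed in $S^1$. In a metric space every closed set is a $G_\delta$; explicitly, $K=\bigcap_{n\in\nn}\{y\in S^1:\,\mathrm{dist}(y,K)<1/n\}$, which is a countable intersection of open sets. Hence $i$ is a $G_\delta$-embedding of $X$ into $S^1$.

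Combined with the hypothesis that $S^1$ $G_\delta$-embeds in $X$, Theorem \ref{TG} applies verbatim and yields that $S^1$ complementably embeds in $X$. There is no serious obstacle; the one observation worth pausing on is the metric-space fact that closed sets are $G_\delta$, which converts the assumption ``$X$ is a closed subspace of $S^1$'' into the required $G_\delta$-embedding hypothesis of Theorem \ref{TG}.
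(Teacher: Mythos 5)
Your proof is correct and is exactly the argument the paper intends: the corollary is stated as an immediate consequence of Theorem \ref{TG}, with the inclusion $X\hookrightarrow S^1$ serving as the second $G_\delta$-embedding because closed bounded subsets of a closed subspace remain closed, hence $G_\delta$, in $S^1$. Nothing further is needed.
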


\begin{rem}
Note that a positive answer to the following question, implies that problem 2 from the introduction has a positive answer as well.
Let $T:S^1\to S^1$ be an operator and $X$ be a reflexive subspace
of $S^1$ such that the  restriction of $T$ on $X$  is an isomorphism.
Is it true that the set  $\{\mu_{T^{**}(x^{**})}: x^{**}\in\mathcal{B}_1(S^1) \}$
is a non-separable subset of $\mathcal{M}(2^\nn)$?
\end{rem}

\section*{Acknowledgement}
I would like to thank Professor S. A. Argyros and Professor I. Gasparis, whose crucial observations helped improve the content as well as the presentation of the results of this paper.

\end{document}